\newtheorem{thm}{Theorem}[section]
\newtheorem{prop}[thm]{Proposition}
\newtheorem{cor}[thm]{Corollary}
\newtheorem{lem}[thm]{Lemma}
\newtheorem{rem}[thm]{Remark}
\newtheorem{conj}[thm]{Conjecture}
\numberwithin{equation}{section}
\def \vtk{V^{\otimes k}}
\def \bY{\bar Y}
\def \D{\Delta_k}
\def \Z{\mathbb Z}
\def \C{\mathbb C}
\def \N{\mathbb N}
\def \wt{{\rm wt}}
\def \Res{{\rm Res}}
\def \End{{\rm End}}
\def \mod{{\rm mod}}
\def \o{\omega}
\def \l{\lambda}
\newcommand{\ZZ}[0]{{\Z}_+}
\newcommand{\Lx}[0]{x^{j + 1} \frac{\partial}{\partial x}}
\newcommand{\Lo}[0]{x \frac{\partial}{\partial
x}}
\begin{document}

\title[Odd order permutation-twisted tensor product VOSA-modules]{Twisted modules for tensor product vertex operator superalgebras and permutation automorphisms of odd order}

\author{Katrina Barron}
\address{Department of Mathematics, University of Notre Dame}
\email{kbarron@nd.edu}

\subjclass{Primary 17B68, 17B69, 17B81, 81R10, 81T40, 81T60}

\date{October 5, 2013}

\keywords{Vertex operator superalgebras, twisted sectors, permutation 
orbifold, superconformal field theory}

\begin{abstract}
We construct and classify $(1 \; 2\; \cdots \; k)$-twisted $V^{\otimes k}$-modules for $k$ odd and for $V$ a vertex operator superalgebra. This extends previous results of the author, along with Dong and Mason, classifying all permutation-twisted modules for tensor product vertex operator algebras, to the setting of vertex operator superalgebras for odd order permutations.   We show why this construction does not extend to the case of permutations of even order in the superalgebra case and how the construction and classification in the even order case is fundamentally different than that for the odd order permutation case.  We present a conjecture made by the author and Nathan Vander Werf concerning the classification of permutation twisted modules for permutations of even order.
\end{abstract}

\maketitle

\section{Introduction}

Let $V$ be a  vertex operator (super)algebra, and for a fixed positive integer $k$,  consider the tensor product vertex operator (super)algebra $\vtk$ (see \cite{FLM3}, \cite{FHL}).  Any 
element $g$ of the symmetric group $S_k$ acts in a natural way on $\vtk$ as a vertex operator (super)algebra automorphism, and thus it is appropriate  to consider $g$-twisted $\vtk$-modules.  This is the setting for permutation orbifold conformal field theory, and for permutation orbifold superconformal field theory if the vertex operator superalgebra is not just super, but is also supersymmetric, i.e. is a representation of a Neveu-Schwarz super-extension of the Virasoro algebra.

In \cite{BDM}, the author along with Dong and Mason constructed and classified the $g$-twisted $V^{\otimes k}$-modules for $V$ a vertex operator algebra and $g$ any permutation.  In the present paper, we extend these results to $g$-twisted $V^{\otimes k}$-modules for $V$ a vertex operator superalgebra and $g$ a permutation of odd order.  In addition, we show that the results of \cite{BDM} for permutation-twisted tensor product vertex operator algebras and the results of the current paper do not extend in a straightforward way to the vertex operator superalgebra setting for permutations in the full symmetric group, but that for even order permutations the construction is necessarily fundamentally different.  

Twisted vertex operators were discovered and used in \cite{LW}.   Twisted modules for vertex operator algebras arose in the work of I. Frenkel, J. Lepowsky and A. Meurman \cite{FLM1}, \cite{FLM2}, \cite{FLM3} for the case of a lattice vertex operator algebra and the lattice isometry $-1$, in the course of the construction of the moonshine module vertex operator algebra (see also \cite{Bo}). This structure came to be understood as an ``orbifold model" in the sense of conformal field theory and string theory.  Twisted modules are the mathematical counterpart of ``twisted sectors", which are the basic building blocks of orbifold models in conformal field theory and string theory (see \cite{DHVW1}, \cite{DHVW2}, \cite{DFMS}, \cite{DVVV}, \cite{DGM}, as well as \cite{KS}, \cite{FKS}, \cite{Ba1}, \cite{Ba2}, \cite{BHS}, \cite{dBHO}, \cite{HO}, \cite{GHHO}, \cite{Ba3} and \cite{HH}).  Orbifold theory plays an important role in conformal field theory and in super extensions, and is also a way of constructing a new vertex operator (super)algebra from a given one.  

Formal calculus arising {}from twisted vertex operators associated to a an even lattice was systematically developed in \cite{Le1}, \cite{FLM2}, \cite{FLM3} and \cite{Le2}, and the twisted Jacobi identity was formulated and shown to hold for these operators (see also \cite{DL2}).  These results led to the introduction of the notion of $g$-twisted $V$-module \cite{FFR}, \cite{D}, for $V$ a vertex operator algebra and $g$ an automorphism of $V$.  This notion records the properties of twisted operators obtained in \cite{Le1}, \cite{FLM1}, \cite{FLM2}, \cite{FLM3} and \cite{Le2}, and provides an axiomatic definition of the notion of twisted sectors for conformal field theory.  In general, given a vertex operator algebra $V$ and an automorphism $g$ of $V$, it is an open problem as to how to construct a $g$-twisted $V$-module.

The focus of this paper is the study of permutation-twisted sectors for vertex operator superalgebras.    A theory of twisted operators for integral lattice vertex operator superalgebras and finite automorphisms that are lifts of a lattice isometry were studied in \cite{DL2} and \cite{Xu}, and the general theory of twisted modules for vertex operator superalgebras was developed by Li in \cite{Li2}.  Certain specific examples of permutation-twisted sectors in superconformal field theory have been studied from a physical point of view in, for instance, \cite{FKS}, \cite{BHS}, \cite{Maio-Schellekens1}, \cite{Maio-Schellekens2}. 

The main result of this paper is the explicit construction and classification of twisted sectors for permutation orbifold theory in the general setting of $V$ a vertex operator superalgebra and $g$ a cyclic permutation of odd order acting on $V^{\otimes k}$.  In particular, for $g$ a $k$-cycle for $k$ odd, and $V$ any vertex operator superalgebra, we show that the categories of weak, weak admissible and ordinary $g$-twisted $\vtk$-modules are isomorphic to the categories of weak, weak admissible and ordinary $V$-modules, respectively. (The definitions of weak, weak admissible and ordinary twisted modules are given in  Section \ref{twisted-module-definition-section}.)  To construct the isomorphism between the category of weak $g$-twisted $\vtk$-modules and the category of weak $V$-modules for $g$ a $k$-cycle, we explicitly define a weak $g$-twisted $\vtk$-module structure on any weak $V$-module.

We show that this method of constructing the permutation-twisted modules for tensor product vertex operator superalgebras fails for cycles of even length (and thus for permutations of even order in general) by pointing out that the operators one would use to construct the twisted modules in the case of a cycle of even order live in the wrong space;  see Remark \ref{wrong-space-remark}.   We show that these operators rather belong to a class of vertex operators which produce ``generalized twisted modules" which satisfy a more general Jacobi identity such as those given by the relativized twisted operators studied in \cite{DL2}; see Remark \ref{generalized-remark}.  

We make note of a conjecture made in \cite{BV-fermion} by the author and Vander Werf for permutation-twisted modules in the even order case.  This conjecture is based on our observations of the nature of the twisted modules for free fermions.   In particular, we conjecture
that for $g$ a $k$-cycle for $k$ even, and $V$ any vertex operator superalgebra,  the categories of weak, weak admissible and ordinary $g$-twisted $\vtk$-modules are isomorphic to the categories of weak, weak admissible and ordinary parity-twisted $V$-modules, respectively.
Our results can be used to construct permutation-twisted modules for a permutation $g$ of odd order acting on the multifold tensor product of a vertex operator superalgebra.

This paper is organized as follows. In Section \ref{definitions-section}, we recall the definitions of vertex operator superalgebra, and weak, weak admissible, and ordinary twisted module, as well as some of their properties.  In Section \ref{Delta-section}, we define the operator $\D(x)$ on a vertex operator superalgebra  $V$ and prove several important properties of $\D(x)$ which are needed in subsequent sections.   This is the main operator from which our twisted vertex operators will be built.   The main ideas for the proofs of these identities come {}from the development of this operator in the nonsuper case in \cite{BDM} and the supergeometry developed in \cite{Barron-memoirs}, \cite{Barron-CCM2003} and \cite{Barron-change} restricted to the vertex operator superalgebra setting.  

In Section  \ref{tensor-product-setting-section}, we develop the setting for $(1 \; 2 \; \cdots \; k)$-twisted $V^{\otimes k}$-modules and study the vertex operators for a $V$-module modified by the orbifolding $x \rightarrow x^{1/k}$ and composing with the operator $\Delta_k(x)$.  In particular we derive the supercommutator formula for these operators showing that these operators satisfy the twisted Jacobi identity for odd vectors if and only if $k$ is an odd integer.  
In Section \ref{tensor-product-twisted-construction-section}, we use the operators to define a weak $g = (1 \; 2 \;  \cdots \; k)$-twisted $\vtk$-module structure on any weak $V$-module in the case when  $k$ is odd.  As a result we construct a functor $T_g^k$ {}from the category of weak $V$-modules to the category of weak $g$-twisted $\vtk$-modules such that $T_g^k$ maps weak admissible (resp., ordinary) $V$-modules into weak admissible (resp., ordinary) $g$-twisted $\vtk$-modules.  In addition, $T_g^k$ preserves irreducible objects.

In Section \ref{classification-section}, we define a weak $V$-module structure on any weak $g = (1 \; 2 \; \cdots k)$-twisted $\vtk$-module, for $V$ a vertex operator superalgebra and $k$ odd.  In so doing, we construct a functor $U_g^k$ {}from the category of weak $g$-twisted $\vtk$-modules to the category of weak $V$-modules such that $T_g^k \circ U_g^k = id$ and $U_g^k \circ T_g^k = id$.  

In Section \ref{counterexample-section}, we comment on recent work and future work on constructing and classifying permutation-twisted modules for permutations of even order.  In particular, we present a conjecture from \cite{BV-fermion} as to what the classification of $(1 \; 2 \; \cdots \; k)$-twisted $V^{\otimes k}$-modules for $k$ even is, and we make note of the constructions and observations given in \cite{BV-fermion} for the case of $V$ the free fermion vertex operator superalgebra as evidence in support of this conjecture.

\section{Vertex operator superalgebras, twisted modules and some of their properties}\label{definitions-section}

In this section we recall some of the formal calculus we will need, and we recall the notions of vertex superalgebra and vertex operator superalgebra.  We also recall some properties of such structures, and prove a general geometrically inspired identity.  Then we present the notion of  $g$-twisted module for a vertex operator superalgebra and an automorphism $g$.  

\subsection{Formal calculus}

Let $x, x_0, x_1, x_2,$ etc., denote commuting independent formal variables.
Let $\delta (x) = \sum_{n \in \Z} x^n$.  We will use the binomial
expansion convention, namely, that any expression such as $(x_1 -
x_2)^n$ for $n \in \C$ is to be expanded as a formal power series in
nonnegative integral powers of the second variable, in this case
$x_2$.

For $r \in \mathbb{C}$ we have
\begin{equation}\label{delta-function1}
x_2^{-1}\left(\frac{x_1-x_0} {x_2}\right)^{r}
\delta\left(\frac{x_1-x_0} {x_2}\right) =
x_1^{-1}\left(\frac{x_2+x_0} {x_1}\right)^{-r}\delta
\left(\frac{x_2+x_0} {x_1}\right) ,
\end{equation}
and it is easy to see that for $k$ a positive integer,
\begin{equation}\label{delta-function2}
\sum_{p=0}^{k-1}\left(\frac{x_1-x_{0}}{x_2}\right)^{p/k}
x_2^{-1}\delta\left(\frac{x_1-x_0}{x_2}\right)=
x_2^{-1}\delta\Biggl(\frac{(x_1-x_0)^{1/k}}{x_2^{1/k}}\Biggr).
\end{equation}
Therefore, we have the $\delta$-function identity
\begin{equation}\label{delta-function3}
x_2^{-1} \delta \Biggl( \frac{(x_1 - x_0)^{1/k}}{x_2^{1/k}} \Biggr) = 
x_1^{-1} \delta \Biggl( \frac{(x_2 + x_0)^{1/k}}{x_1^{1/k}} \Biggr).
\end{equation}

We also have the three-term $\delta$-function identity 
\begin{equation}\label{three-term-delta}
x_{0}^{-1}\delta\left(\frac{x_1-x_2}{x_{0}}\right)-x_{0}^{-1}
\delta\left(\frac{x_2-x_1}{-x_{0}}\right)=x_2^{-1}\delta\left(\frac{x_1-x_0}{x_2}\right).
\end{equation}

Let $R$ be a ring, and let $O$ be an invertible linear operator on
$R[x, x^{-1}]$.  We define another linear operator $O^{\Lo}$ by
\[O^{\Lo} \cdot x^n = O^n x^n \]
for any $n \in \Z$.  For example, since the formal variable $z^{1/k}$ 
can be thought of as an invertible linear multiplication operator  
from $\C [x, x^{-1}]$ to $\mathbb{C}[z^{1/k},z^{-1/k}]
[x,x^{-1}]$, we have the corresponding operator $z^{(1/k) \Lo}$ {}from $\C 
[x,x^{-1}]$ to $\mathbb{C}[z^{1/k},z^{-1/k}]
[x,x^{-1}]$.  Note that $z^{(1/k) \Lo}$ can  be extended to a 
linear operator on $\C [[x,x^{-1}]]$ in the obvious way.

From Proposition 2.1.1 in \cite{H}, we have the following lemma.
\begin{lem}\label{Huang-lemma} (\cite{H})
For any formal power series in $f(x) \in x \mathbb{C} [[x]]$, given by 
\[f(x) = \sum_{j \in \N} a_j x^{j+1}  \ \ \ \mbox{for $a_j \in \C$} \]
there exists a unique sequence $\{A_j\}_{j \in \Z_+}$  in $\C$ such that 
\begin{equation}
f(x) = \exp \left( \sum_{j \in \Z_+} A_j x^{j+1} \frac{d}{dx} \right) a_0^{x \frac{d}{dx}} x.
\end{equation}
\end{lem}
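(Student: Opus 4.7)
The plan is to reduce the asserted identity to a triangular recursive system on the coefficients $A_j$ and to solve it. Since $x \frac{d}{dx}$ is the Euler operator, acting as multiplication by $n$ on $x^n$, we immediately have $a_0^{x d/dx} x = a_0 x$; note that $a_0 \neq 0$ is implicitly required for the identity to hold (otherwise the right-hand side vanishes while $f$ need not), as is standard for this formulation. Pulling the scalar $a_0$ through the exponential, the identity becomes equivalent to finding unique $A_j \in \C$ ($j \in \ZZ$) such that
\[
\exp\biggl(\sum_{j \in \ZZ} A_j x^{j+1} \frac{d}{dx}\biggr) x = x + \sum_{j \in \ZZ} \frac{a_j}{a_0} \, x^{j+1}.
\]

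Next I would grade everything by $x$-degree. The operator $x^{i+1} \frac{d}{dx}$ raises degree by $i$, since it sends $x^n$ to $n x^{n+i}$, and consequently an $n$-fold composition $x^{i_1+1} \frac{d}{dx} \cdots x^{i_n+1} \frac{d}{dx}$ applied to $x$ produces a scalar multiple of $x^{1 + i_1 + \cdots + i_n}$ with each $i_k \geq 1$. Hence for any fixed target degree $j+1$, only finitely many tuples $(i_1, \dots, i_n)$ contribute, so the formal exponential is well-defined as an element of $\C[[x]]$. Extracting the coefficient of $x^{j+1}$ yields an expression of the form
\[
A_j + P_j(A_1, \dots, A_{j-1}),
\]
where the linear piece $A_j$ comes from the single-operator term $A_j x^{j+1} \frac{d}{dx} \cdot x$, and $P_j$ is a polynomial in $A_1, \dots, A_{j-1}$ collecting all $n$-fold compositions with $n \geq 2$ and $i_1 + \cdots + i_n = j$; such compositions force each $i_k \leq j - (n-1) \leq j - 1$, giving the key triangular structure.

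Finally, matching coefficients with the target series produces the triangular system $A_j = a_j/a_0 - P_j(A_1, \dots, A_{j-1})$, which recursively determines each $A_j$ uniquely from $a_1, \dots, a_j$, proving both existence and uniqueness. The main obstacle, though not a severe one, is the degree bookkeeping required to verify triangularity rigorously and to confirm that the formal exponential makes sense on $x\C[[x]]$; once that is in place, the result follows by a straightforward induction on $j$.
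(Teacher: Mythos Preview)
Your argument is correct: the degree-grading makes the exponential well-defined on $x\C[[x]]$, and extracting the coefficient of $x^{j+1}$ yields the triangular recursion $A_j = a_j/a_0 - P_j(A_1,\dots,A_{j-1})$, which settles both existence and uniqueness. Note only that the hypothesis $a_0 \neq 0$ (equivalently, that $f$ is invertible under composition) is indeed implicit in the statement, as you observed.

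The paper does not supply its own proof of this lemma; it simply quotes the result as Proposition 2.1.1 of Huang's monograph \cite{H}. The proof there proceeds by exactly the same triangular-recursion idea you have written out, so your approach is the standard one rather than a genuinely different route.
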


Let $\varphi$ be a formal anti-commuting variable, that is, commuting with $x$ but satisfying $\varphi^2 = 0$.  From Proposition 3.5 in \cite{Barron-memoirs}, we have the following lemma.
\begin{lem}(\cite{Barron-memoirs})  For any formal power series in $f(x) \in x \mathbb{C} [[x]]$ given as in Lemma \ref{Huang-lemma}, and a choice of $a_0^{1/2}$, we have that
\begin{equation}\label{super-f}
\exp \left( -\sum_{j \in \Z_+} A_j L_j(x, \varphi) \right) (a_0^{1/2})^{-2L_0(x, \varphi)}  (x, \varphi) = \left(f(x) , \varphi \sqrt{f'(x)} \right)
\end{equation}
where 
\[L_j(x, \varphi) = - \left( x^{j+1} \frac{\partial}{\partial x} + \left(\frac{j+1}{2}\right) \varphi x^j \frac{\partial}{\partial \varphi} \right) ,\]
and $\sqrt{f'(x)}$ is the unique series in $\mathbb{C}[[x]]$ satisfying $\sqrt{f'(x)}^2 = f'(x)$ and such that $\sqrt{f'(x)} |_{x = 0} = a_0^{1/2}$.
\end{lem}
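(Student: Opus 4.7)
The plan is to evaluate the left-hand side componentwise: the $x$-component by reducing to the nonsuper Lemma \ref{Huang-lemma}, and the $\varphi$-component by exploiting the superconformal invariance of the flow generated by the $L_j(x,\varphi)$.

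First I would observe that $L_0(x,\varphi) = -x\,\partial/\partial x - (1/2)\varphi\,\partial/\partial\varphi$ acts on $x$ with eigenvalue $-1$ and on $\varphi$ with eigenvalue $-1/2$, so
\[
(a_0^{1/2})^{-2L_0(x,\varphi)}(x,\varphi) \ =\ \bigl(a_0\,x,\ a_0^{1/2}\varphi\bigr).
\]
Moreover, for each $j\geq 1$ the derivation $L_j$ preserves the subring of series in $x$ alone (the $\partial/\partial\varphi$--part carries a factor of $\varphi$ and thus annihilates $\varphi$-free elements), and on this subring $L_j$ restricts to $-x^{j+1}\,\partial/\partial x$. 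Applying $\exp(-\sum A_j L_j)$ to $a_0\,x$ therefore yields $\exp\bigl(\sum A_j\,x^{j+1}\,\partial/\partial x\bigr)\cdot a_0\,x$, which by Lemma \ref{Huang-lemma} equals $f(x)$. This disposes of the $x$-component.

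For the $\varphi$-component, the relation $\varphi^2=0$ together with the direct computation $L_k(\varphi\, x^n) = -\bigl((k+1)/2 + n\bigr)\,\varphi\, x^{k+n}$ shows inductively that any iterate of the $L_j$'s applied to $\varphi$ lies in $\varphi\,\C[[x]]$, and the same is true of the $L_0$-factor; hence
\[
\exp\bigl(-{\textstyle\sum}_{j\in\Z_+} A_j L_j\bigr)\,(a_0^{1/2})^{-2L_0(x,\varphi)}\,\varphi \ =\ \varphi\,G(x)
\]
for a unique $G(x)\in\C[[x]]$, and the constant term is $G(0) = a_0^{1/2}$ (all higher-order corrections carry a positive power of $x$). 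To identify $G(x)$ with $\sqrt{f'(x)}$, I would invoke the superconformal structure. Setting $D := \partial/\partial\varphi + \varphi\,\partial/\partial x$, a short direct calculation gives
\[
[D,\,L_j] \ =\ -\tfrac{j+1}{2}\,x^j\,D,\qquad j\geq 0,
\]
so each $L_j$ is a superconformal vector field in the sense that $D$ is preserved up to multiplication by a scalar function. Consequently the flow sends the superconformal pair $(x,\varphi)$, which trivially satisfies $Dx = \varphi\cdot D\varphi$, to a pair satisfying the same relation $D\tilde x = \tilde\varphi\cdot D\tilde\varphi$. Substituting $\tilde x = f(x)$ and $\tilde\varphi = \varphi\,G(x)$ yields $\varphi\,f'(x) = \varphi\,G(x)^2$, so $G(x)^2 = f'(x)$, and the initial value $G(0) = a_0^{1/2}$ pins down the correct branch, giving $G(x) = \sqrt{f'(x)}$.

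The main obstacle is the superconformal-invariance step: one must verify carefully that the bracket identity $[D,L_j] = -\tfrac{j+1}{2}\,x^j\,D$ really does imply the preservation of the relation $D\tilde x = \tilde\varphi\,D\tilde\varphi$ under the formal exponential $\exp(-\sum A_j L_j)(a_0^{1/2})^{-2L_0}$, including the case of the infinite sum. This amounts to checking that the chain rule for $D$ is compatible with the exponentiated action on the coordinate functions, and that the exponential is well-defined order by order in $x$; this is where the geometric content developed in \cite{Barron-memoirs} enters and where most of the formal work is hidden.
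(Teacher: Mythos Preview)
The paper does not give its own proof of this lemma; it is stated with the citation \cite{Barron-memoirs} and used as a black box. So there is nothing in the present paper to compare against, and your proposal should be judged on its own merits.

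Your argument is sound. The $x$-component is an immediate reduction to Lemma~\ref{Huang-lemma}, exactly as you say. For the $\varphi$-component, the bracket identity $[D,L_j]=-\tfrac{j+1}{2}x^jD$ is correct (your computation checks on both $x^n$ and $\varphi x^n$), and it really does imply preservation of the superconformal constraint $D\tilde x=\tilde\varphi\,D\tilde\varphi$. You flag this implication as the ``main obstacle,'' but in fact it is short: set $R_t=D\tilde x_t-\tilde\varphi_t\,D\tilde\varphi_t$ along the flow of $-L_j$; using that $L_j$ is an even derivation together with the bracket identity one finds
\[
\frac{d}{dt}R_t=\Bigl(-L_j+\tfrac{j+1}{2}x^j\Bigr)R_t,
\]
so $R_0=0$ forces $R_t\equiv 0$. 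The same applies to the $L_0$-factor, and the passage to the infinite sum $\sum_{j\ge 1}A_jL_j$ is harmless because each $L_j$ raises $x$-degree by $j$, so the exponential is well defined coefficient by coefficient. With this paragraph inserted, your proof is complete and self-contained, with no need to defer to \cite{Barron-memoirs}.

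One small presentational point: after applying $(a_0^{1/2})^{-2L_0}$ you are flowing from $(a_0x,\,a_0^{1/2}\varphi)$, not from $(x,\varphi)$; you should note that this pair also satisfies $D(a_0x)=a_0\varphi=(a_0^{1/2}\varphi)\cdot D(a_0^{1/2}\varphi)$, so the induction base for the invariance argument is in place.
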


Note that the formal power series in $x \mathbb{C}[[x]], \varphi \mathbb{C}[[x]]$ given by (\ref{super-f}) is superconformal in the sense of \cite{Barron-memoirs}, but is not the most general form of a superconformal formal power series vanishing at zero.  In particular, the infinitesimal superconformal transformations involved, in this case $L_j(x,\varphi)$, only give a representation of the Virasoro algebra and not a super extension of the Virasoro algebra such as the Neveu-Schwarz algebra.  This reflects the fact that throughout this work, we will be assuming that we are working with a vertex operator superalgebra, i.e. one that has supercommuting properties with respect to a $\Z_2$-grading as well as a $\frac{1}{2} \Z$-grading, but is not necessarily supersymmetric, meaning it will not necessarily be a representation of any super extension of the Virasoro algebra.

\subsection{Vertex superalgebras, vertex operator superalgebras, and some of their properties}

A {\it vertex superalgebra} is a vector space which is $\Z_2$-graded (by {\it sign})
\begin{equation}
V= V^{(0)} \oplus V^{(1)}
\end{equation}
equipped with a linear map
\begin{eqnarray}
V &\longrightarrow& (\mbox{End}\,V)[[x,x^{-1}]]\\
v &\mapsto& Y(v,x)= \sum_{n\in\Z}v_nx^{-n-1} \nonumber
\end{eqnarray}
such that $v_n \in (\mathrm{End} \; V)^{(j)}$ for $v \in V^{(j)}$, $j \in \Z_2$, 
and equipped with a distinguished vector ${\bf 1} \in V^{(0)}$, (the {\it vacuum vector}), satisfying the following conditions for $u, v \in
V$:
\begin{eqnarray}
u_nv & \! = \! & 0\ \ \ \ \ \mbox{for $n$ sufficiently large};\\
Y({\bf 1},x) & \! = \! & Id_V;\\
Y(v,x){\bf 1} & \! \in \! & V[[x]]\ \ \ \mbox{and}\ \ \ \lim_{x\to 0}Y(v,x){\bf 1}\ = \ v; 
\end{eqnarray}
and for $u,v \in  V$ of homogeneous sign, the {\it Jacobi identity} holds 
\begin{equation*}
x^{-1}_0\delta\left(\frac{x_1-x_2}{x_0}\right) Y(u,x_1)Y(v,x_2) - (-1)^{|u||v|}
x^{-1}_0\delta\left(\frac{x_2-x_1}{-x_0}\right) Y(v,x_2)Y(u,x_1)
\end{equation*}
\begin{equation}
= x_2^{-1}\delta \left(\frac{x_1-x_0}{x_2}\right) Y(Y(u,x_0)v,x_2)
\end{equation}
where $|v| = j$ if $v \in V^{(j)}$ for $j \in \Z_2$.

This completes the definition. We denote the vertex superalgebra just
defined by $(V,Y,{\bf 1})$, or briefly, by $V$.

Note that as a consequence of the definition, we have that there exists a distinguished endomorphism $T \in (\mathrm{End} \; V)^{(0)}$ defined by 
\[ T(v) = v_{-2} {\bf 1} \ \ \ \ \mbox{for $v \in V$} \]
such that 
\[ [T, Y(v,x)] = Y(T(v), x)  \ = \ \frac{d}{dx} Y(v,x),\]
(cf. \cite{LL}, \cite{Barron-alternate}, \cite{Barron-n2axiomatic}).

A {\it vertex operator superalgebra} is a vertex superalgebra with a distinguished vector $\omega\in V_2$ (the {\it conformal element}) satisfying the following conditions: 
\begin{equation}
[L(m),L(n)]=(m-n)L(m+n)+\frac{1}{12}(m^3-m)\delta_{m+n,0}c
\end{equation}
for $m, n\in \Z,$ where
\begin{equation}
L(n)=\omega_{n+1}\ \ \ \mbox{for $n\in \Z$, \ \ \ \ i.e.},\
Y(\omega,x)=\sum_{n\in\Z}L(n)x^{-n-2}
\end{equation}
and $c \in \mathbb{C}$ (the {\it central charge} of $V$);
\begin{equation}
T = L(-1) \ \ \ \ \mbox{i.e.}, \ \frac{d}{dx}Y(v,x)=Y(L(-1)v,x) \ \mbox{for $v \in V$}; 
\end{equation}
$V$ is $\frac{1}{2}\Z$-graded (by {\it weight}) 
\begin{equation}
V=\coprod_{n\in \frac{1}{2} \Z}V_n 
\end{equation}
such that 
\begin{eqnarray}
L(0)v & \! = \! & nv \ = \ (\mbox{wt}\,v)v \ \ \ \mbox{for $n \in  \frac{1}{2} \Z$ and $v\in V_n$}; \\
{\rm dim} \, V & \! < \! & \infty ;\\
V_n & \! = \! &  0 \ \ \ \ \mbox{for $n$ sufficiently negative};
\end{eqnarray}
and $V^{(j)} = \coprod_{n\in \Z + \frac{j}{2}}V_n$ for $j \in \Z_2$.

This completes the definition. We denote the vertex operator superalgebra just
defined by $(V,Y,{\bf 1},\omega)$, or briefly, by $V$.

\begin{rem}\label{VOSAs-tensor-remark} {\em Note that if $(V, Y, {\bf 1})$ and $(V', Y', {\bf 1}')$ are two vertex superalgebras, then $(V \otimes V', \; Y \otimes Y', \; {\bf 1} \otimes {\bf 1}')$ is a vertex superalgebra where
\begin{equation}\label{define-tensor-product}
(Y \otimes Y') (u \otimes u', x) (v \otimes v') = (-1)^{|u'||v|} Y(u,x)v \otimes Y'(u',x)v'.
\end{equation}
If in addition, $V$ and $V'$ are vertex operator superalgebras with conformal vectors $\omega$ and $\omega'$ respectively, then $V\otimes V'$ is a vertex operator superalgebra with conformal vector $\omega \otimes {\bf 1}' + {\bf 1} \otimes \omega'$.}
\end{rem}

\begin{rem}\label{parity-grading-on-V}
{\em
As a consequence of the definition of vertex operator superalgebra, independent of the requirement that as a vertex superalgebra we should have $v_n \in (\mathrm{End} \, V)^{(|v|)}$, we have that $\mathrm{wt} (v_n u ) = \mathrm{wt} u + \mathrm{wt} v - n -1$, for $u,v \in V$ and $n \in \Z$.  This implies that $v_n \in (\mathrm{End} \, V)^{(j)}$ if and only if $v \in V^{(j)}$ for $j \in \mathbb{Z}_2$, without us having to assume this as an axiom.  
}
\end{rem}

Next we present some change of variables formulas and identities, generalizing \cite{H} in the nonsuper case, and reducing the results and techniques of \cite{Barron-memoirs}, \cite{Barron-CCM2003}, \cite{Barron-change} to the case of vertex operator superalgebras that are not necessarily supersymmetric.

Let $f_{t^{1/2}}(x) \in t^{-1}x \C[[x]]$ be a formal power series given by
\begin{equation}\label{define-H}
f_{t^{1/2}} (x) = \exp \Biggl( -\sum_{j \in \Z_+} A_j L_j(x, \varphi) \Biggr) (t^{-1/2}a_0^{1/2})^{-2L_0(x, \varphi)}  \cdot x
\end{equation}
for $A_j, a_0^{1/2} \in \C$ for $j \in \Z_+$, and for $t^{-1/2}$ a formal commuting variable.   In particular, we have  
\begin{eqnarray*}
\exp \Biggl( -\sum_{j \in \Z_+} A_j L_j(x, \varphi) \Biggr) (t^{-1/2}a_0^{1/2})^{-2L_0(x, \varphi)}  \cdot \varphi \biggr|_{x = 0} &=& (t^{-1/2}a_0^{1/2})^{-2L_0(x, \varphi)}  \cdot \varphi \\
& =& t^{-1/2} a_0^{1/2} \varphi.
\end{eqnarray*}
Define $\Theta_j =  \Theta_j(t^{-1/2} a_0^{1/2}, \{A_n\}_{n \in \Z_+}, x) \in \mathbb{C}[x][[t^{1/2}]]$, for $j \in \mathbb{N}$, by 
\begin{multline}\label{define-Theta}
f_{t^{1/2}} ( t a_0^{-1} w + f^{-1}_{t^{1/2}} (x)) - x \\
= \exp \Biggl( - \sum_{j \in Z_+} \Theta_j L_j(w, \rho) \Biggr) \exp(- \Theta_0 2L_0(w, \rho)) \cdot w 
\end{multline}
for $\rho$ an anticommuting formal variable.  That is, in particular, we define $\Theta_0$ such that 
\[\left. e^{-\Theta_0 2L_0(w, \rho)} \cdot \rho \right|_{w = 0} = e^{-\Theta_0} \rho.\]

By Corollary 3.44 of \cite{Barron-memoirs}, the formal series $\Theta_j$ are indeed well defined and in $\mathbb{C}[x][[t^{1/2}]]$.

\begin{prop}\label{Theta-prop}  Let $V= (V, Y, {\bf 1}, \omega)$ be a vertex operator superalgebra.  Then for $t^{1/2}$ a formal variable, $A_j \in \C$ for $j \in \Z_+$, and $a_0^{1/2} \in \C$, we have that 
\begin{multline}
e^{- \sum_{j \in \Z_+} A_j L(j)  } \cdot (t^{-\frac{1}{2}} a_0^\frac{1}{2})^{-2L(0)} Y(u,x) \cdot
(t^{-\frac{1}{2}} a_0^\frac{1}{2})^{2L(0)} \cdot e^{\sum_{j \in \Z_+} A_j L(j) } \\
= Y\biggl( (t^{-\frac{1}{2}} a_0^{\frac{1}{2}})^{-2L(0)} e^{- \sum_{j \in \Z_+} \Theta_j L(j) } \cdot e^{-2\Theta_0 L(0)} u, f_{t^{1/2}}^{-1}(x)
\biggr)  
\end{multline}
where 
\begin{equation}
f_{t^{1/2}}^{-1}(x) 
= (t^{-\frac{1}{2}} a_0^\frac{1}{2})^{-2x\frac{\partial}{\partial x}} \cdot
\exp \Bigl(-\sum_{j \in \Z_+} A_j x^{j+1} \frac{\partial}{\partial x} \Bigr) \cdot x
\end{equation} 
and the $\Theta_j = \Theta_j (t^{-1/2} a_0^{1/2}, \left\{A_n\right\}_{n \in \Z_+}, x)$, for $j \in \mathbb{N}$, are defined by (\ref{define-Theta}).  
\end{prop}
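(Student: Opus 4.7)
The plan is to parallel Huang's derivation of the analogous formula in the nonsuper case (cf.~\cite{H}) within the supergeometric framework of \cite{Barron-memoirs}, restricted to the Virasoro (rather than Neveu-Schwarz) subalgebra. The crucial point is that both sides of the identity involve only the Virasoro generators $L(j)$ for $j \geq 0$; no fermionic (Neveu-Schwarz) operators appear. Consequently, although the definition (\ref{define-Theta}) of the $\Theta_j$ is phrased supergeometrically using $L_j(w,\rho)$, the resulting algebraic identity needs only the Virasoro representation carried by $V$ as a VOSA. Moreover, since $\omega \in V_2$ is an even element, the bracket relation
$$[L(n), Y(u,x)] = \sum_{\ell = 0}^{n+1} \binom{n+1}{\ell} x^\ell Y(L(n-\ell) u, x) + x^{n+1} \frac{\partial}{\partial x} Y(u,x)$$
contains no super-signs and is identical to its nonsuper counterpart, so all subsequent computations proceed as in the bosonic setting.

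First I would strip off the scaling factor. Setting $\alpha := (t^{-1/2} a_0^{1/2})^{-2}$ and applying the classical identity $\alpha^{L(0)} Y(u,x) \alpha^{-L(0)} = Y(\alpha^{L(0)} u, \alpha x)$, the left-hand side collapses to $e^{-\sum A_j L(j)} Y(\alpha^{L(0)} u, \alpha x) e^{\sum A_j L(j)}$. It then remains to prove the pure Virasoro conjugation identity
$$e^{-\sum_{j \in \Z_+} A_j L(j)} Y(v,y) e^{\sum_{j \in \Z_+} A_j L(j)} = Y\bigl(e^{-\sum \Theta_j L(j)} e^{-2\Theta_0 L(0)} v, \, g(y)\bigr),$$
where $g(y) := \exp(-\sum A_j y^{j+1} \partial_y) \cdot y$, and then to confirm that under the substitution $v = \alpha^{L(0)}u$, $y = \alpha x$ the transformed variable and vector reassemble into the right-hand side of the proposition. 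The variable matching is the identity $f_{t^{1/2}}^{-1}(x) = g(\alpha x)$, immediate from the definition of $f_{t^{1/2}}^{-1}$ together with $\alpha^{x\partial_x} \cdot g(x) = g(\alpha x)$; the vector matching uses $\alpha^{L(0)} L(j) \alpha^{-L(0)} = \alpha^{-j} L(j)$ to commute $\alpha^{L(0)}$ into the prescribed position, which amounts to a rescaling of the $\Theta_j$ consistent with (\ref{define-Theta}). The conjugation identity itself I would prove by introducing a scalar parameter $s$ (with $A_j \mapsto sA_j$), observing that both sides agree at $s=0$, and verifying that both satisfy the same first-order formal ODE in $s$; the LHS derivative follows from the Virasoro bracket above, and the RHS derivative follows from differentiating $g(y)$ and the $\Theta_j$ in $s$ via (\ref{define-Theta}). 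Uniqueness of formal ODE solutions then completes the argument.

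The main technical obstacle is the bookkeeping between the supergeometric definition (\ref{define-Theta})---which uses the full operator $L_j(w,\rho) = -\bigl(w^{j+1}\partial_w + \tfrac{j+1}{2}\rho w^j \partial_\rho\bigr)$---and the purely algebraic identity, which involves only the Virasoro $L(j)$. The $\rho$-derivative term contributes a factor $e^{-\Theta_0}$ to $\rho$ (and correspondingly $a_0^{1/2}$ to $\varphi$ in (\ref{define-H})), which is precisely why the algebraic side exhibits the ``doubled'' exponents $-2\Theta_0 L(0)$ and $-2L(0)$: the $\tfrac{1}{2}$ coefficient on the $\varphi$-derivative in $L_0(x,\varphi)$ forces a square-root matching between the geometric and algebraic sides. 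One must check this matching is consistent throughout the identity, but this reduces to a formal check that (\ref{define-Theta}), after the $\varphi, \rho$ auxiliaries are discarded, captures exactly the bosonic composition data needed for the proof. That the $\Theta_j$ are well-defined elements of $\C[x][[t^{1/2}]]$ is guaranteed by Corollary 3.44 of \cite{Barron-memoirs}, so all exponentials appearing make sense as formal operators.
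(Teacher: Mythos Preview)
Your proposal is correct and takes essentially the same approach as the paper: both appeal to Huang's argument for Equation~(5.4.10) in \cite{H} (equivalently, Equation~(7.17) in \cite{Barron-CCM2003} with the Neveu--Schwarz generators set to zero), observing that since only the Virasoro operators $L(j)$ with $j\geq 0$ appear and $\omega$ is even, the bracket relations carry no super-signs and Huang's derivation goes through verbatim. The paper in fact gives only this one-sentence citation, whereas you have unpacked the actual mechanics (stripping the scaling, the formal ODE in $s$, and the bookkeeping between the supergeometric $\Theta_j$ and the algebraic side), which is a faithful elaboration of the cited argument.
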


\begin{proof} By the proof of Equation (5.4.10)\footnote{There is a typo in this equation in
\cite{H}.  $A^{(0)}$ in the first line of equation (5.4.10) should be $A^{(1)}$ which is the infinite series $\{A_j^{(1)}\}_{j\in\Z_{+}}$,
where $A_j^{(1)}\in \C$.} in \cite{H} extended to the case of a vertex operator superalgebra or analogously by the proof of Equation (7.17) in \cite{Barron-CCM2003}, restricted to a not necessarily supersymmetric vertex operator superalgebra (i.e., by assuming all $G(j - 1/2)$ terms, for $j \in \Z$ are zero) the result follows.  
\end{proof}

\subsection{The notion of twisted module}\label{twisted-module-definition-section}

Let $(V, Y, {\bf 1})$ and $(V', Y', {\bf 1}')$ be vertex superalgebras.  A {\it homomorphism of vertex superalgebras} is a linear map $g: V \longrightarrow V'$ of $\Z_2$-graded vector spaces  such that $g({\bf 1}) = {\bf 1}'$ and 
\begin{equation}\label{automorphism}
g Y(v,x) =Y'(gv,x)g
\end{equation}
for $v\in V.$   Note that this implies that $g \circ T = T'\circ g$.  If in addition, $V$ and $V'$ are vertex operator superalgebras with conformal elements $\omega$ and $\omega'$, respectively, then a  {\it homomorphism of vertex operator superalgebras} is a homomorphism of vertex superalgebras $g$ such that $g(\omega) = \omega'$.  In particular $g V_n\subset V'_n$ for $n\in  \frac{1}{2} \mathbb{Z}$.

An {\it automorphism} of a vertex (operator) superalgebra $V$ is a bijective vertex (operator) superalgebra homomorphism from $V$ to $V$.

If $g$ is an automorphism of a vertex (operator) superalgebra $V$ such that $g$ 
has finite order, then $V$ is a direct sum of the eigenspaces $V^j$ of $g$,
\begin{equation}
V=\coprod_{j\in \Z /k \Z }V^j,
\end{equation}
where $k \in \mathbb{Z}_+$ and $g^k = 1$,  and
\begin{equation}
V^j=\{v\in V \; | \; g v= \eta^j v\},
\end{equation}
for $\eta$ a fixed primitive $k$-th root of unity.  We denote the projection of $v \in V$ onto the $j$-th eigenspace, $V^j$, by $v_{(j)}$.

Let $(V, Y, \mathbf{1})$ be a vertex superalgebra and $g$ an automorphism of $V$ of period $k$.  A {\em $g$-twisted $V$-module} is a $\mathbb{Z}_2$-graded vector space $M = M^{(0)} \oplus M^{(1)}$ equipped with a linear map 
\begin{eqnarray}
V &\rightarrow& (\End \, M)[[x^{1/k},x^{-1/k}]] \\  
v  &\mapsto& Y_g(v,x)=\sum_{n\in \frac{1}{k} \mathbb{Z} }v^g_n x^{-n-1}, \nonumber 
\end{eqnarray}
with $v_n^g \in (\mathrm{End} \; M)^{(|v|)}$, such that for $u,v\in V$ and $w\in M$ the following hold: 
\begin{equation}
v^g_nw=0 \mbox{ if $n$ is sufficiently large};
\end{equation}
\begin{equation} 
Y_g({\bf 1},x)=1;
\end{equation}
the twisted Jacobi identity holds: for $u, v\in V$ of homogeneous sign
\[x^{-1}_0\delta\left(\frac{x_1-x_2}{x_0}\right)
Y_g(u,x_1)Y_g(v,x_2)- (-1)^{|u||v|} x^{-1}_0\delta\left(\frac{x_2-x_1}{-x_0}\right)
Y_g(v,x_2)Y_g(u,x_1) \]
\begin{equation}\label{twisted-Jacobi}
= \frac{x_2^{-1}}{k} \sum_{ j \in \mathbb{Z}/k\mathbb{Z}} 
\delta\left( \eta^j \frac{(x_1-x_0)^{1/k}}{x_2^{1/k}}\right) Y_g(Y(g^ju,x_0)v,x_2).
\end{equation}

This completes the definition of $g$-twisted $V$-module for a vertex superalgebra.  We denote the  $g$-twisted $V$-module just defined by $(M, Y_g)$ or just by $M$ for short.  

We note that the generalized twisted Jacobi identity (\ref{twisted-Jacobi}) is equivalent to
\[x^{-1}_0\delta\left(\frac{x_1-x_2}{x_0}\right)
Y_g(u,x_1)Y_g(v,x_2)- (-1)^{|u||v|} x^{-1}_0\delta\left(\frac{x_2-x_1}{-x_0}\right)
Y_g(v,x_2)Y_g(u,x_1) \]
\begin{equation}\label{twisted-Jacobi-eigenspace}
= x_2^{-1}  \left(\frac{x_1-x_0}{x_2}\right)^{-r/k}
\delta\left( \frac{x_1-x_0}{x_2}\right) Y_g(Y(u,x_0)v,x_2)
\end{equation}
for $u \in V^r$, $r = 0, \dots, k-1$.  In addition, this implies that for $v\in V^r$, 
\begin{equation}\label{Y-for-eigenvector}
Y_g(v,x)=\sum_{n\in r/k + \Z }v^g_n x^{-n-1},
\end{equation}
and for $v \in V$,
\begin{equation}\label{limit-axiom}
Y_g(gv,x) = \lim_{x^{1/k} \to \eta^{-1} x^{1/k}} Y_g(v,x) .
\end{equation}

If $g = 1$, then a $g$-twisted $V$-module is a $V$-module for the vertex superalgebra $V$.  If $(V, Y, {\bf 1}, \omega)$ is a vertex operator superalgebra and $g$ is a vertex operator superalgebra automorphism of $V$, then since $\omega \in V^{0}$, we have that $Y_g(\o,x)$ has component operators which satisfy the Virasoro algebra relations and $Y_g(L(-1)u,x)=\frac{d}{dx}Y_g(u,x)$. In this case, a $g$-twisted $V$-module as defined above, viewed as a vertex superalgebra module, is called a {\em weak $g$-twisted $V$-module} for the vertex operator superalgebra $V$.

A {\em weak admissible}  $g$-twisted $V$-module is a weak $g$-twisted
$V$-module $M$ which carries a $\frac{1}{2k}\N$-grading
\begin{equation}\label{m3.12}
M=\coprod_{n\in\frac{1}{2k}\N}M(n)
\end{equation}
such that $v^g_mM(n)\subseteq M(n+\wt \; v-m-1)$ for homogeneous $v\in V$, $n \in \frac{1}{2k} \N$, and $m \in \frac{1}{k} \Z$.  We may assume that $M(0)\ne 0$ if $M\ne 0$.   If $g = 1$, then a weak admissible $g$-twisted $V$-module is called a weak admissible $V$-module.

An (ordinary) $g$-twisted $V$-module is a weak $g$-twisted $V$-module $M$ graded by $\C$ induced by the spectrum of $L(0).$ That is, we have
\begin{equation}\label{g3.14}
M=\coprod_{\lambda \in{\C}}M_{\lambda} 
\end{equation}
where $M_{\l}=\{w\in M|L(0)^gw=\l w\}$, for $L(0)^g = \omega_1^g$. Moreover we require that $\dim
M_{\l}$ is finite and $M_{n/2k +\l}=0$ for fixed $\l$ and for all
sufficiently small integers $n$.  If $g = 1$, then a $g$-twisted $V$-module is a $V$-module.

A {\it homomorphism of weak $g$-twisted $V$-modules}, $(M, Y_g)$ and $(M', Y_g')$, is a linear map $f: M \longrightarrow M'$ satisfying 
\begin{equation}
f(Y_g(v, x)w) = Y'_g(v, x) f(w).
\end{equation} 
for $v \in V$, and $w \in M$.  If in addition, $M$ and $M'$ are weak admissible $g$-twisted $V$-modules, then a {\it homomorphism of weak admissible $g$-twisted $V$-modules}, is a homomorphism of weak $g$-twisted $V$-modules such that 
$f(M(n)) \subseteq M'(n)$.  And if $M$ and $M'$ are ordinary $g$-twisted $V$-modules, then a {\it homomorphism of $g$-twisted $V$-modules}, is a homomorphism of weak  $g$-twisted $V$-modules such that 
$f(M_\lambda) \subseteq M'_\lambda$. 

The vertex operator superalgebra $V$ is called $g$-{\em rational} if every
weak admissible $g$-twisted $V$-module is completely reducible, i.e., a
direct sum of irreducible admissible $g$-twisted modules.

It was proved in \cite{DZ2} that if $V$ is $g$-rational then: (1) every
irreducible admissible $g$-twisted $V$-module is an ordinary
$g$-twisted $V$-module; and (2) $V$ has only finitely many isomorphism
classes of irreducible admissible $g$-twisted modules.

\begin{rem}{\em
In many works on vertex operator superalgebras, e.g. \cite{Li2}, \cite{DZ}, \cite{DZ2}, \cite{DH}, the condition that $v_n^g \in (\mathrm{End} \; M)^{(|v|)}$ for $v \in V$, is not given as one of the axioms of a module (twisted or untwisted) for a vertex superalgebra.  That is, it is not assumed that the $\mathbb{Z}_2$-grading of $V$ coincides with the $\mathbb{Z}_2$-grading of $M$ via the action of $V$ as super-endomorphisms acting on $M$.  Then in, for instance, \cite{DZ}, \cite{DZ2}, \cite{DH}, the notion of ``parity-stable module" is introduced for those modules that are representative of the $\mathbb{Z}_2$-grading of $V$.  However in \cite{BV-fermion}, we prove the following (reworded to fit our current setting):
\vspace{-.2in}
\begin{quote}
\begin{thm}\label{parity-stability-theorem}(\cite{BV-fermion})
Let $V$ be a vertex superalgebra and $g$ an automorphism.  Suppose $(M, Y_M)$ is a parity-unstable $g$-twisted $V$-module, i.e., such that all axioms except for $v_n^g \in (\mathrm{End} \; M)^{(|v|)}$ in the definition of a $g$-twisted $V$-module hold.  Then $(M, Y_M \circ \sigma_V)$ is a parity-unstable $g$-twisted $V$-module which is not isomorphic to $(M, Y_M)$.  Moreover $(M, Y_M)  \oplus (M, Y_M \circ \sigma_V)$ is a parity-stable $g$-twisted $V$-module, i.e. a $g$-twisted $V$ module in terms of the definition given above.   
\end{thm}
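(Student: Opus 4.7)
The plan is to take $\sigma_V \in \Aut(V)$ to be the canonical parity involution on $V$, defined on homogeneous vectors by $\sigma_V v = (-1)^{|v|} v$; since the $\mathbb{Z}_2$-grading is part of the vertex superalgebra structure, $\sigma_V$ is a vertex superalgebra automorphism, and it commutes with $g$ because $g$ preserves the $\mathbb{Z}_2$-grading of $V$.  Thus composition gives operators $Y_M'(v,x) := Y_M(\sigma_V v, x) = (-1)^{|v|} Y_M(v,x)$ on $M$ for homogeneous $v$, and I would establish the three assertions in turn.

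First, to see that $(M, Y_M \circ \sigma_V)$ is again a parity-unstable $g$-twisted $V$-module, only the twisted Jacobi identity (\ref{twisted-Jacobi}) together with the vacuum and truncation axioms need checking; parity-incompatibility is inherited automatically since $(v_n^g)' = (-1)^{|v|} v_n^g$ maps each $M^{(i)}$ to the same subspace as $v_n^g$.  The vacuum axiom $Y_M'({\bf 1},x) = 1$ follows from $\sigma_V {\bf 1} = {\bf 1}$, and truncation is immediate.  For the twisted Jacobi identity, the left-hand side of (\ref{twisted-Jacobi}) with $Y_g$ replaced by $Y_M'$ acquires a uniform factor $(-1)^{|u|+|v|}$; on the right-hand side, using $\sigma_V Y(a,x) = Y(\sigma_V a, x)\sigma_V$ together with $\sigma_V g = g\sigma_V$, one has
\[ Y_M'(Y(g^j u, x_0)v, x_2) = Y_M\bigl(\sigma_V Y(g^j u, x_0)v,\, x_2\bigr) = (-1)^{|u|+|v|} Y_M(Y(g^j u, x_0)v, x_2), \]
so both sides acquire the same sign and the identity is preserved.

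For the non-isomorphism claim, I would argue by contradiction.  Any isomorphism $f : (M, Y_M) \to (M, Y_M \circ \sigma_V)$ satisfies $f \circ Y_M(v,x) = (-1)^{|v|} Y_M(v,x) \circ f$ for homogeneous $v$, so $f^2$ commutes with every $Y_M(v,x)$ and hence lies in the commutant of the $V$-action.  After applying a Schur-type argument (directly in the irreducible case, and on indecomposable summands in general) one may normalize $f$ so that $f^2 = \mathrm{id}$; the $\pm 1$ eigenspaces of $f$ then form a $\mathbb{Z}_2$-grading on $M$ preserved by $Y_M(v,x)$ for $v \in V^{(0)}$ and swapped by $Y_M(v,x)$ for $v \in V^{(1)}$, giving a parity-stable $g$-twisted $V$-module structure on $M$ with the same underlying $V$-action and contradicting parity-instability.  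The main obstacle is making this Schur-type step precise in the non-irreducible case, where $f^2$ need not be scalar a priori; this is the key technical point to flesh out.

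Finally, for the direct sum, the underlying diagonal $V$-action on $M \oplus M$ takes the form $v_n^g \cdot (w_1, w_2) = (v_n^g w_1,\, (-1)^{|v|} v_n^g w_2)$ for homogeneous $v$, and I would regrade by
\[ (M \oplus M)^{(0)} := \{(w,w) : w \in M\}, \qquad (M \oplus M)^{(1)} := \{(w,-w) : w \in M\}. \]
A direct check then shows that $v_n^g$ sends $(M \oplus M)^{(i)}$ into $(M \oplus M)^{(i+|v|)}$ for each $i \in \mathbb{Z}_2$, establishing parity-compatibility; the truncation, vacuum, and twisted Jacobi identity axioms transfer coordinatewise from the two parity-unstable summands $(M, Y_M)$ and $(M, Y_M \circ \sigma_V)$ already verified above.
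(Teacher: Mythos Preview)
The paper does not actually prove this theorem: it is quoted inside a remark and attributed to \cite{BV-fermion}, with no argument supplied here.  So there is no ``paper's own proof'' to compare against, and your write-up stands on its own.

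Your verifications for the first and third claims are correct and are exactly the standard arguments.  For the twisted Jacobi identity under $Y_M' = Y_M\circ\sigma_V$, the uniform sign $(-1)^{|u|+|v|}$ appears on both sides because $\sigma_V$ is a vertex superalgebra automorphism commuting with $g$, just as you wrote.  For the direct sum, your regrading $(M\oplus M)^{(0)}=\{(w,w)\}$, $(M\oplus M)^{(1)}=\{(w,-w)\}$ is precisely the diagonalization of the involution $(w_1,w_2)\mapsto(w_2,w_1)$, and the compatibility check is immediate.

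The gap you flag in the non-isomorphism step is genuine.  Your argument is complete when $M$ is irreducible: then $f^2$ lies in the commutant, Schur gives $f^2=c\cdot\mathrm{id}$, and after rescaling the $\pm1$-eigenspaces of $f$ furnish a compatible $\mathbb{Z}_2$-grading, contradicting parity-instability.  For general $M$, however, $f^2$ need not be scalar and need not admit a square root in the commutant without further hypotheses (finiteness, semisimplicity, or at least some Krull--Schmidt-type control on indecomposables).  The phrase ``on indecomposable summands in general'' hides real work: $f$ need not preserve a given indecomposable decomposition, and even if it did, the commutant of an indecomposable is a local ring rather than a field.  In the setting of \cite{BV-fermion} the application is to irreducible modules (free fermions), so the Schur argument suffices there; if you want the statement in full generality you should either add an irreducibility hypothesis or supply a separate argument showing that existence of any intertwiner $f$ with $fY_M(v,x)=(-1)^{|v|}Y_M(v,x)f$ already forces a compatible grading, without normalizing $f^2$.
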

\end{quote}
Requiring weak twisted modules to be parity stable as part of the definition gives the more canonical notion of weak twisted module from a categorical point of view, for instance, for the purpose of defining a $V_1 \otimes V_2$-modules structure on $M_1 \otimes M_2$ for $M_j$ a $V_j$-module, for $j = 1,2$.  (See e.g. (\ref{define-tensor-product})). In particular, the notion of a weak $V$-module corresponding to a {\it representation} of $V$ as a vertex superalgebra only holds for parity-stable weak $g$-twisted $V$-modules, in that the vertex operators acting on a weak $g$-twisted $V$-module have coefficients in $\mathrm{End} \, M $ such that, the operators $v_m^g$ have a $\mathbb{Z}_2$-graded structure compatible with that of $V$.   For instance the operators $v_0^g$, for $v \in V$, give a representation of the Lie superalgebra generated by $v_0$ in $\mathrm{End} \, V$ if and only if $M$ is parity stable.  This corresponds to $V$ acting as endomorphisms in the category of vector spaces (i.e., via even or odd endomorphisms) rather than in the category of $\mathbb{Z}_2$-graded vectors spaces (i.e., as grade-preserving and thus strictly even endomorphisms).   However, it is interesting to note that, as is shown in \cite{BV-fermion}, for a lift of a lattice isometry, the twisted modules for a lattice vertex operator superalgebra naturally sometimes give rise to pairs of parity-unstable invariant subspaces that then must be taken as a direct sum to realize the actual twisted module.  }
\end{rem}

\section{The operator $\Delta_k (x)$}\label{Delta-section}
\setcounter{equation}{0} 

In this section, following and generalizing \cite{BDM}, we define an operator $\Delta_k(x)=\Delta_k^V(x)$ on a vertex operator superalgebra $V$ for a fixed positive integer $k$. In Section 3, we will use $\Delta_k(x)$ to construct a $g$-twisted 
$V^{\otimes k}$-module {}from a $V$-module where $g$ is a certain $k$-cycle with $k$ odd.

Let $\ZZ$ denote the positive integers.  Let $x$, $y$, $z$, and $z_0$ be formal variables commuting with each other.  Consider the polynomial
\[\frac{1}{k} (1 + x)^k - \frac{1}{k} \in x \C [x] . \]  
By Lemma \ref{Huang-lemma}, for $k \in \ZZ$, we can define $a_j \in \C$ for $j \in \ZZ$, by  
\begin{equation}\label{define-a}
\exp \Biggl( - \sum_{j \in \ZZ} a_j  \Lx
\Biggr) \cdot x = \frac{1}{k} (1 + x)^k -
\frac{1}{k} .
\end{equation}
For example, $a_1=(1-k)/2$ and $a_2=(k^2-1)/12.$

Let
\begin{eqnarray*}
f(x) &=&  z^{1/k} \exp \Biggl(- \!  \! \sum_{j \in \ZZ} a_j  \Lx \Biggr) 
\! \cdot x \\
&=& \exp \Biggl(- \! \! \sum_{j \in \ZZ} a_j  \Lx \Biggr) \! \cdot
z^{(1/k) \Lo} \cdot x\\
&=& \frac{z^{1/k}}{k} (1 + x)^k - \frac{z^{1/k}}{k} \quad \in
z^{1/k}x\mathbb{C}[x].
\end{eqnarray*}
Then the compositional inverse of $f(x)$ in $x\mathbb{C}[z^{-1/k}, 
z^{1/k}][[x]]$ is given by
\begin{eqnarray*}
f^{-1} (x) &=& z^{- (1/k) \Lo} \exp \Biggl( \sum_{j \in \ZZ} a_j  \Lx 
\Biggr) \cdot x \\
&=& z^{- 1/k} \exp \Biggl( \sum_{j \in \ZZ} a_j 
z^{- j/k} \Lx \Biggr) \cdot x \\
&=& (1 + k z^{- 1/k} x)^{1/k} - 1
\end{eqnarray*} 
where the last line is considered as a formal power series in
$z^{-1/k}x \mathbb{C}[z^{-1/k}][[x]]$, i.e., we are expanding
about $x = 0$ taking $1^{1/k} = 1$.

Now let $\Theta_j =  \Theta_j(z^{1/2k}, \{-a_n\}_{n \in \Z_+}, x) \in \mathbb{C}[x][[ z^{-1/2k}]]$, for $j \in \mathbb{N}$, where the $\Theta_j(t^{-1/2} a_0^{1/2}, \{A_n\}_{n \in \Z_+}, x)$ are defined by (\ref{define-Theta}), and the series $\{-a_j\}_{j \in \Z_+}$ are defined by (\ref{define-a}).  That is 
\begin{equation}\label{define-Theta-for-f}
f ( z^{-1/k} w + f^{-1} (x)) - x = e^{ \left( - \sum_{j \in Z_+} \Theta_j L_j(w, \rho) \right)} e^{- \Theta_0 2L_0(w, \rho)} \cdot w,
\end{equation}
and $e^{-\Theta_0 2L_0(w, \rho)} \cdot \rho = e^{\Theta_0} \rho$.

\begin{prop}\label{Theta-prop-BDM} The series
$\Theta_j (z^{1/2k}, \{- a_n \}_{n \in \ZZ},  \frac{1}{k}
z^{1/k - 1} z_0)$, for $j \in {\N}$, is a well-defined series with terms in
$\C[z_0][[z^{-1/k}]]$.  Furthermore
\begin{equation}\label{Theta-j}
\Theta_j \left(z^\frac{1}{2k}, \{- a_n \}_{n \in \ZZ}, k^{-1}
z^{\frac{1}{k} - 1} z_0\right) = - a_j (z + z_0)^{-\frac{ j}{k}} 
\end{equation} 
for $j \in \ZZ$, and
\begin{equation}\label{Theta-zero}
\exp \left(z^\frac{1}{2k}, \Theta_0 (\{- a_n \}_{n \in \ZZ},  k^{-1}
z^{\frac{1}{k} - 1} z_0) \right) = z^{ -\frac{1}{2k}(k-1)} (z +
z_0)^{\frac{1}{2k}(k-1)} , 
\end{equation} 
where $(z + z_0)^{- r/2k}$ is understood to be expanded in 
nonnegative integral powers of $z_0$.
\end{prop}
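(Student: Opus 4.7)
The plan is to evaluate both sides of the defining equation (\ref{define-Theta-for-f}) at $x = k^{-1}z^{1/k-1}z_0$ and match coefficients, appealing to the uniqueness of the exponential factorization from Corollary 3.44 of \cite{Barron-memoirs}. First I would compute the left-hand side explicitly: plugging the specialization into $f^{-1}(x) = (1+kz^{-1/k}x)^{1/k}-1$ telescopes to $f^{-1}(x) = (1+z^{-1}z_0)^{1/k}-1$, and setting $\alpha := (z+z_0)^{1/k}$ so that $z^{1/k}(1+z^{-1}z_0)^{1/k} = \alpha$, a short calculation using $f(u) = \frac{z^{1/k}}{k}((1+u)^k-1)$ (pulling $z^{-1}$ out of the $k$-th power) gives
\[
f(z^{-1/k}w + f^{-1}(x)) - x = \frac{z^{1/k-1}}{k}\bigl[(w+\alpha)^k - \alpha^k\bigr] = \frac{z^{1/k-1}\alpha^k}{k}\bigl[(1 + w/\alpha)^k - 1\bigr].
\]

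Next I would convert this to the exponential form required on the right-hand side. The defining relation (\ref{define-a}) reads $\frac{1}{k}[(1+\tilde w)^k-1] = \exp(-\sum_{j\in\ZZ} a_j\tilde w^{j+1}\partial_{\tilde w})\cdot \tilde w$ as a formal power series in $\tilde w$. Substituting $\tilde w = w/\alpha$ treating $\alpha$ as a scalar in $w$, the chain rule gives $\tilde w^{j+1}\partial_{\tilde w} = \alpha^{-j}w^{j+1}\partial_w$, and the sign convention $L_j(w,\rho) = -(w^{j+1}\partial_w + \tfrac{j+1}{2}\rho w^j\partial_\rho)$ means this operator equals $-\alpha^{-j}L_j(w,\rho)$ on $w$-only functions. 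Thus $\exp(-\sum a_j\tilde w^{j+1}\partial_{\tilde w})$ becomes $\exp(\sum_{j\in\ZZ}a_j\alpha^{-j}L_j(w,\rho))$; multiplying by the scalar $z^{1/k-1}\alpha^k$ and absorbing the factor of $\alpha$ that converts the action on $w/\alpha$ into an action on $w$ yields
\[
z^{1/k-1}\alpha^{k-1}\exp\Bigl(\sum_{j\in\ZZ}a_j\alpha^{-j} L_j(w,\rho)\Bigr)\cdot w
\]
for the left-hand side of (\ref{define-Theta-for-f}).

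Finally I would match this against $\exp(-\sum\Theta_j L_j)\exp(-2\Theta_0 L_0)\cdot w$. Because $L_0(w,\rho)\cdot w = -w$, the inner factor $\exp(-2\Theta_0 L_0)\cdot w$ reduces to $e^{2\Theta_0}w$, and pulling this scalar past the remaining exponential produces $e^{2\Theta_0}\exp(-\sum\Theta_j L_j)\cdot w$. Comparing scalars gives $e^{2\Theta_0} = z^{1/k-1}\alpha^{k-1} = z^{-(k-1)/k}(z+z_0)^{(k-1)/k}$, so $\exp(\Theta_0) = z^{-(k-1)/(2k)}(z+z_0)^{(k-1)/(2k)}$; comparing the $L_j$-exponents gives $\Theta_j = -a_j\alpha^{-j} = -a_j(z+z_0)^{-j/k}$ for $j \in \ZZ$, with the uniqueness statement in Corollary 3.44 of \cite{Barron-memoirs} justifying the term-by-term identification. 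Well-definedness in $\C[z_0][[z^{-1/k}]]$ is then a quick binomial expansion $(z+z_0)^{-r} = z^{-r}\sum_{n\geq 0}\binom{-r}{n}z^{-n}z_0^n$ for $r = j/k$ or $r = (k-1)/(2k)$, producing series in $z^{-1/k}$ with $\C[z_0]$-coefficients. The point requiring the most care is the middle step: the sign flip coming from $L_j = -(x^{j+1}\partial_x + \cdots)$ is precisely what produces the minus sign in the final answer, and it must be tracked consistently through the change of variable $\tilde w = w/\alpha$.
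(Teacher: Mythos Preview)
Your argument is correct and follows essentially the same route as the paper: both evaluate $f(z^{-1/k}w+f^{-1}(x))-x$ at the specialized $x$, rewrite the result via the defining exponential (\ref{define-a}) after the substitution $\tilde w = w/\alpha$, and then read off the $\Theta_j$ by uniqueness. The only substantive difference is in how $e^{\Theta_0}$ is pinned down: the paper applies the operator to $\rho$ (obtaining $e^{\Theta_0}$ directly), while you read off $e^{2\Theta_0}$ from the leading $w$-coefficient and then take a square root. That step is fine because $\Theta_0$, as a well-defined formal series with zero constant term (check $z_0=0$), forces $e^{\Theta_0}$ to be the unique square root with constant term $1$; it would strengthen the write-up to say this explicitly rather than passing silently from $e^{2\Theta_0}$ to $e^{\Theta_0}$.
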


\begin{proof}  From (\ref{define-Theta-for-f}), the formal series $\Theta_j (z^{1/2k}, \{- a_n \}_{n \in \ZZ},  x)$ for $j \in \ZZ$ are the same as the series $\Theta_j (\{- a_n \}_{n \in \ZZ},  z^{1/k}, x)$ of \cite{BDM}.  We also have that the square of the exponential of  $\Theta_0 (z^{1/2k}, \{- a_n \}_{n \in \ZZ},  x)$ is equal to the exponential of the series $\Theta_0 (\{- a_n \}_{n \in \ZZ},  z^{1/k}, x)$ of \cite{BDM}.  Thus by Proposition 2.1 in \cite{BDM}, Equation (\ref{Theta-j}) follows.  In particular, in $w \C[z_0] [[z^{-1/k}]][[w]]$, we have 
\begin{eqnarray*} 
\lefteqn{\hspace{-.2in}
\exp \Biggl(\sum_{j \in \ZZ} a_j (z + z_0)^{-\frac{j}{k}} L_j(w, \rho)
\Biggr) z^{ \frac{1}{2k}(k-1)2L_0(w, \rho)} (z +
z_0)^{-\frac{1}{2k}(k-1)2L_0(w, \rho)} \cdot w  }  \\
&=& z^{\frac{1}{k} - 1} (z + z_0)^{-\frac{1}{k} + 1}
\exp \Biggl(- \! \sum_{j \in \ZZ} a_j (z + z_0)^{-\frac{j}{k}} w^{j+1} \frac{\partial}{\partial w}
\Biggr) \cdot w \\
&=& \left. f(f^{-1} (x) + z^{-1/k}w ) - x \right|_{x =\frac{1}{k} 
z^{1/k - 1} z_0}\\ 
&=&  \exp \Biggl( - \!\sum_{j \in \ZZ} \Theta_j 
(z^{1/2k}, \{- a_n \}_{n \in \ZZ},  x)L_j(w, \rho)  \Biggr) \\
& & \quad \left. \exp \left(-\Theta_0 (z^{1/2k}, \{- a_n \}_{n \in \ZZ}, x) 2L_0(w, \rho)\right)  \cdot w
\right|_{x =\frac{1}{k} z^{1/k - 1} z_0} .
\end{eqnarray*}

In addition, we have that in $\rho\C[z_0][[z^{-1/2k}]][[w]]$,
\begin{eqnarray*} 
\lefteqn{ \hspace{-.8in} \left.
\exp \Biggl(\sum_{j \in \ZZ} a_j (z + z_0)^{-\frac{j}{k}} L_j(w, \rho)
\Biggr) z^{ \frac{1}{2k}(k-1)2L_0(w, \rho)} (z +
z_0)^{-\frac{1}{2k}(k-1)2L_0(w, \rho)} \cdot \rho \right|_{w = 0} }  \\
&=&  z^{ -\frac{1}{2k}(k-1)} (z + z_0)^{\frac{1}{2k}(k-1)} \rho \\
&=&  \left. \exp \left(\Theta_0 (z^{1/2k}, \{- a_n \}_{n \in \ZZ}, x) \right)  \cdot \rho
\right|_{x =\frac{1}{k} z^{1/k - 1} z_0} ,
\end{eqnarray*}
giving Equation (\ref{Theta-zero}).  
\end{proof}

Let $V= (V, Y, {\bf 1}, \omega)$ be a vertex operator superalgebra.  Define the operator 
$\Delta_k^V(z) \in (\End \;V)[[z^{1/2k}, z^{-1/2k}]]$, by 
\[\Delta_k^V (z) = \exp \Biggl( \sum_{j \in \ZZ} a_j z^{- \frac{j}{k}} L(j) 
\Biggr) (k^\frac{1}{2})^{-2L(0)} \left(z^{\frac{1}{2k}\left( k-1\right)}\right)^{- 2L(0)} .\] 

\begin{prop}\label{psun1} 
In $(\End \;V)[[z^{1/2k}, z^{-1/2k}]]$, we have
\[\Delta_k^V (z) Y(u, z_0) \Delta_k^V (z)^{-1} = Y(\Delta_k^V 
(z + z_0)u, \left( z + z_0 \right)^{1/k} 
- z^{1/k}  ) ,\]
for all $u \in V$.
\end{prop}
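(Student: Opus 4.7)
The plan is to recognize $\Delta_k^V(z)$ as a specialization of the general conjugating operator of Proposition \ref{Theta-prop} and apply that proposition. Specifically, I would set $a_0^{1/2} = 1$, $t^{-1/2} = k^{1/2}z^{(k-1)/(2k)}$, and $A_j = -a_j z^{-j/k}$ for $j \in \Z_+$. With these choices, $\exp(-\sum_j A_j L(j))\,(t^{-1/2}a_0^{1/2})^{-2L(0)}$ is exactly $\Delta_k^V(z)$, and Proposition \ref{Theta-prop} yields
\[
\Delta_k^V(z) Y(u,z_0) \Delta_k^V(z)^{-1} = Y\!\left((k^{1/2}z^{(k-1)/(2k)})^{-2L(0)} e^{-\sum_j \Theta_j L(j)} e^{-2\Theta_0 L(0)} u,\; f_{t^{1/2}}^{-1}(z_0)\right),
\]
with $\Theta_j = \Theta_j(k^{1/2}z^{(k-1)/(2k)}, \{-a_n z^{-n/k}\}_{n\in\Z_+}, z_0)$. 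It remains to identify both arguments on the right-hand side.

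The second argument is a direct computation. The substitution $y = z^{-1/k}x$ converts $\exp(\sum_j a_j z^{-j/k} x^{j+1}\tfrac{\partial}{\partial x})$ into $\exp(\sum_j a_j y^{j+1}\tfrac{\partial}{\partial y})$; by the defining relation (\ref{define-a}) and inverse-flow considerations, its action on $y$ is the compositional inverse of $(1+y)^k/k - 1/k$, namely $(1+ky)^{1/k} - 1$. Restoring $x$ and applying the subsequent scaling $x \mapsto k^{-1}z^{1/k-1}x$ and simplifying powers of $z$ yields $f_{t^{1/2}}^{-1}(z_0) = (z+z_0)^{1/k} - z^{1/k}$, as required.

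The more delicate step is the first argument, for which I would relate the $\Theta_j$ above to the series $\Theta_j^{\mathrm{BDM}} := \Theta_j(z^{1/2k}, \{-a_n\}_{n\in\Z_+}, k^{-1}z^{1/k-1}z_0)$ that is computed in Proposition \ref{Theta-prop-BDM}. Using $f_{t^{1/2}}(y) = (y+z^{1/k})^k - z$ directly, one finds
\[
f_{t^{1/2}}\!\left(k^{-1}z^{1/k-1}w + f_{t^{1/2}}^{-1}(z_0)\right) - z_0 = \bigl((z+z_0)^{1/k} + k^{-1}z^{1/k-1}w\bigr)^k - (z+z_0),
\]
while the analogous BDM expression works out to $\psi(w) = (k^{-1}z^{1/k-1})\bigl[((z+z_0)^{1/k}+w)^k - (z+z_0)\bigr]$, so the two are related by $\phi(w) = r^{-1}\psi(rw)$ with $r = k^{-1}z^{1/k-1}$. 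By the uniqueness in Lemma \ref{Huang-lemma}, this forces $\Theta_0 = \Theta_0^{\mathrm{BDM}}$ and $\Theta_j = r^j \Theta_j^{\mathrm{BDM}}$ for $j \geq 1$; substituting (\ref{Theta-j})--(\ref{Theta-zero}) gives $\Theta_j = -a_j k^{-j} z^{j(1/k-1)}(z+z_0)^{-j/k}$ and $e^{\Theta_0} = z^{-(k-1)/(2k)}(z+z_0)^{(k-1)/(2k)}$.

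To finish, I would use $[L(0), L(j)] = -jL(j)$ to move $(k^{1/2}z^{(k-1)/(2k)})^{-2L(0)}$ past $e^{-\sum_j \Theta_j L(j)}$: each coefficient $-\Theta_j$ is multiplied by $k^j z^{j(k-1)/k}$, and the powers of $k$ and $z$ cancel exactly against those in $\Theta_j$, leaving $\exp(\sum_j a_j(z+z_0)^{-j/k} L(j))$. The two remaining $L(0)$-scalings combine via $e^{\Theta_0}$ into $(k^{1/2}(z+z_0)^{(k-1)/(2k)})^{-2L(0)}$, which factors as $(k^{1/2})^{-2L(0)}((z+z_0)^{(k-1)/(2k)})^{-2L(0)}$. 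Reading this against the definition of $\Delta_k^V$, the first slot is precisely $\Delta_k^V(z+z_0)u$, completing the proof. The substance of the argument is the rescaling identification $\phi(w) = r^{-1}\psi(rw)$ and the careful bookkeeping of exponents; no structural input beyond Propositions \ref{Theta-prop} and \ref{Theta-prop-BDM} is required.
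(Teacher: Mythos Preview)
Your argument is correct and follows essentially the same strategy as the paper: both proofs reduce the identity to Proposition~\ref{Theta-prop} together with the explicit values of the $\Theta_j$ supplied by Proposition~\ref{Theta-prop-BDM}. The only difference is organizational. The paper first conjugates by $(z^{1/2k})^{\pm 2L(0)}$, which converts $\exp\bigl(\sum_j a_j z^{-j/k}L(j)\bigr)$ into $\exp\bigl(\sum_j a_j L(j)\bigr)$ and replaces $z_0$ by $k^{-1}z^{1/k-1}z_0$; then Proposition~\ref{Theta-prop} applies with the \emph{constant} coefficients $A_j=-a_j$, so that the $\Theta_j$ of Proposition~\ref{Theta-prop-BDM} enter directly. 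You instead invoke Proposition~\ref{Theta-prop} immediately with the $z$-dependent data $A_j=-a_jz^{-j/k}$, $t^{-1/2}=k^{1/2}z^{(k-1)/(2k)}$, and then recover the same $\Theta_j$ via the rescaling relation $\phi(w)=r^{-1}\psi(rw)$ with $r=k^{-1}z^{1/k-1}$. That rescaling is precisely the effect of the $(z^{1/2k})^{2L(0)}$-conjugation carried out at the level of the defining series~(\ref{define-Theta}), so the two computations are the same calculation performed in a different order.

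One small point: Proposition~\ref{Theta-prop} is stated for $A_j\in\C$ and $a_0^{1/2}\in\C$, with $t^{1/2}$ the sole formal variable. Your application takes $A_j$ and $t^{-1/2}$ to lie in $\C[[z^{1/2k},z^{-1/2k}]]$. This is harmless, since the proof of Proposition~\ref{Theta-prop} is a formal identity that extends verbatim over any commutative coefficient ring, but it is worth a one-line remark. The paper's ordering sidesteps this by reducing to constant $A_j$ before invoking the proposition.
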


\begin{proof} By Proposition \ref{Theta-prop}, where in our case, $A_j^{(1)}=-a_j$,
and by Proposition \ref{Theta-prop-BDM} above, we have that the steps of the proof of Proposition 2.2 in \cite{BDM} all hold in this setting.  That is, we have
\begin{eqnarray*}
\lefteqn{\Delta_k^V (z) Y(u, z_0) \Delta_k^V (z)^{-1}} \\
&=& \! \exp \Biggl(\sum_{j \in \ZZ} a_j z^{- \frac{j}{k}} L(j) \Biggr) 
Y( (k^\frac{1}{2})^{-2L(0)} \left(z^{\frac{1}{2k}\left( k-1\right)}\right)^{- 2L(0)}u, 
k^{-1} z^{ 1/k - 1}z_0)\cdot \\
& & \quad \cdot\exp \Biggl(- \sum_{j \in \ZZ}  a_j z^{- \frac{j}{k}} L(j) \Biggr) \\ 
&=& \! (z^\frac{1}{2k})^{2L(0)} \exp \Biggl(\sum_{j \in \ZZ} 
a_j L(j) \Biggr)  (z^\frac{1}{2k})^{-2L(0)}
Y( (k^\frac{1}{2})^{-2L(0)} z^{-\frac{1}{2k}( k-1)2L(0)}, \\
& & \quad k^{-1} z^{ 1/k - 1}z_0)  (z^\frac{1}{2k})^{2L(0)} \exp \Biggl( - 
\sum_{j \in \ZZ} a_j L(j) \Biggr)  (z^\frac{1}{2k})^{-2L(0)} \\
&=& \!  (z^\frac{1}{2k})^{2L(0)} Y \Biggl(  (z^\frac{1}{2k})^{-2L(0)}
\exp \Biggl( - \sum_{j \in \ZZ} \Theta_j( z^\frac{1}{2k}, \{- a_n \}_{n \in \ZZ}, k^{-1} z^{ \frac{1}{k} - 1}z_0  ) L(j) \Biggr) 
\cdot \Biggr. \\
& & \quad \exp \left(- \Theta_0 (z^\frac{1}{2k}, \{- a_n \}_{n \in \ZZ},  k^{-1}
z^{\frac{1}{k} - 1}z_0) 2L(0) \right)  (k^\frac{1}{2})^{-2L(0)} z^{-\frac{1}{2k}( k-1) 2L(0)}u, \\
& & \quad \Biggl. f^{-1} (k^{-1} z^{ 1/k - 1} z_0 ) \Biggr)  (z^\frac{1}{2k})^{-2L(0)}\\
&=& \!  (z^\frac{1}{2k})^{2L(0)} Y \Biggl( (z^\frac{1}{2k})^{-2L(0)}\exp 
\Biggl( \sum_{j \in \ZZ} a_j (z + z_0)^{- \frac{j}{k}} L(j) \Biggr) 
z^{ \frac{1}{2k}(k-1)2L(0)} 
\end{eqnarray*}
\begin{eqnarray*}
& & \quad (z +
z_0)^{-\frac{1}{2k}(k-1)2L(0)}
(k^\frac{1}{2})^{-2L(0)} z^{-\frac{1}{2k}( k-1)2L(0)}u, \left( 1 + z^{- 1}z_0 \right)^\frac{1}{k} - 1 
\Biggr)\\
& & \quad  (z^\frac{1}{2k})^{-2L(0)} \\
&=& \!  (z^\frac{1}{2k})^{2L(0)}  Y \left( (z^\frac{1}{2k})^{-2L(0)} \Delta_k^V (z + z_0) u, 
\left( 1 + z^{- 1}z_0 \right)^\frac{1}{k} - 1 \right) 
 (z^\frac{1}{2k})^{-2L(0)} \\
&=& \! Y \left(\Delta_k^V (z + z_0) u, 
\left( z + z_0 \right)^\frac{1}{k} - z^\frac{1}{k}  \right) 
\end{eqnarray*}
as desired. \end{proof}

Define $\Delta_k^{(x,\varphi)} (z) \in (\End \; \C[x, x^{-1}][\varphi])
[[z^{1/2k}, z^{-1/2k}]]$ by
\[\Delta_k^{(x, \varphi)} (z) = \exp \Biggl( \sum_{j \in \ZZ} a_j z^{- \frac{j}{k}} L_j(x, \varphi)
\Biggr) (k^\frac{1}{2})^{-2L_0(x, \varphi)} (z^{\frac{1}{2k}( k -1)})^{- 2L_0(x, \varphi)} ,\]
that is
\begin{eqnarray*} 
\Delta_k^{(x, \varphi)} (z) \cdot(x, \varphi) &=& \left(z^\frac{1}{k})^{L_0(x, \varphi)} \cdot (k z^{1 - \frac{1}{k}} f(x), \;  \varphi k^\frac{1}{2} z^{\frac{1}{2} - \frac{1}{2k}} \sqrt{f'(x)}\right) \\
&=& \left( (z^\frac{1}{k} + x)^k - z, \; \varphi k^\frac{1}{2} ( z^\frac{1}{k} + x)^{\frac{1}{2} (k-1)}\right).
\end{eqnarray*}
 
\begin{prop}
In $(\End \; \C[x, x^{-1}][\varphi]) [[z^{1/2k}, z^{-1/2k}]]$, we have 
\begin{eqnarray}
- \Delta_k^{(x,\varphi)} (z) \frac{\partial}{\partial x} + k^{-1}
z^{\frac{1}{k} - 1} \frac{\partial}{\partial x}
\Delta_k^{(x, \varphi)} (z)  &=& \frac{\partial}{\partial z} \Delta_k^{(x, \varphi)} (z) , 
\label{identity-in-rep}\\
- \Delta_k^{(x,\varphi)} (z)^{-1} \frac{\partial}{\partial x} + k 
z^{- \frac{1}{k} + 1} \frac{\partial}{\partial x}
\Delta_k^{(x,\varphi)} (z)^{-1}  &=& k z^{- \frac{1}{k} + 1} 
\frac{\partial}{\partial z} \Delta_k^{(x,\varphi)} (z)^{-1} . 
\label{second-identity-in-rep}
\end{eqnarray}
\end{prop}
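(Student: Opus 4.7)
The plan is to exploit the fact that $\Delta_k^{(x,\varphi)}(z)$, being the exponential of even derivations $L_j(x,\varphi)$ and $L_0(x,\varphi)$ of the superalgebra $\C[x,x^{-1}][\varphi]$ (extended with $z^{\pm 1/2k}$), is itself an algebra automorphism. Consequently its action on any element is determined by substitution from its action on the generators, which is the explicit formula displayed just above the proposition: writing $F(x) := (z^{1/k}+x)^k - z$ and $G(x) := k^{1/2}(z^{1/k}+x)^{(k-1)/2}$, one has
\[\Delta_k^{(x,\varphi)}(z)\cdot\bigl(h_0(x) + \varphi h_1(x)\bigr) = h_0(F(x)) + \varphi G(x)\, h_1(F(x))\]
for any $h_0, h_1 \in \C[x,x^{-1}][[z^{\pm 1/2k}]]$. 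Justifying this substitution rule for the full superalgebra, rather than just the generators, ultimately rests on the super change-of-variables formula (\ref{super-f}) of \cite{Barron-memoirs}, and is the only conceptually nontrivial input.

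To verify (\ref{identity-in-rep}), I would apply both sides to a generic $h(x,\varphi) = h_0(x) + \varphi h_1(x)$ and expand the right-hand side $\partial_z\bigl(h_0(F(x)) + \varphi G(x) h_1(F(x))\bigr)$ using the chain rule. Comparing the independent coefficients of $h_0'(F(x))$, $\varphi G(x) h_1'(F(x))$, and $\varphi h_1(F(x))$ reduces the identity to the two closed-form equalities
\[k^{-1}z^{1/k-1} F'(x) = \partial_z F(x) + 1, \qquad k^{-1}z^{1/k-1} G'(x) = \partial_z G(x),\]
both of which follow at once from $F'(x) = k(z^{1/k}+x)^{k-1}$ and $G'(x) = \tfrac{k-1}{2} k^{1/2}(z^{1/k}+x)^{(k-3)/2}$. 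The ``$+1$'' on the left of the first equality is precisely cancelled by the $-\Delta_k^{(x,\varphi)}(z)\partial_x$ term on the left-hand side of (\ref{identity-in-rep}), which is the structural explanation for its presence.

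For (\ref{second-identity-in-rep}), instead of repeating the chain-rule calculation for $\Delta_k^{(x,\varphi)}(z)^{-1}$, I would derive it formally from (\ref{identity-in-rep}) by conjugation. Setting $A = \Delta_k^{(x,\varphi)}(z)$, identity (\ref{identity-in-rep}) reads $k^{-1}z^{1/k-1}\partial_x A - A\partial_x = \partial_z A$. Applying $A^{-1}$ on both sides and using $A^{-1}(\partial_z A)A^{-1} = -\partial_z A^{-1}$ (which follows from differentiating $AA^{-1}=1$) yields $k^{-1}z^{1/k-1} A^{-1}\partial_x - \partial_x A^{-1} = -\partial_z A^{-1}$, and multiplying through by $-kz^{1-1/k}$ reproduces (\ref{second-identity-in-rep}) exactly. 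The main conceptual step is thus the substitution interpretation of $\Delta_k^{(x,\varphi)}(z)$ discussed above; once that is granted, the remainder is mechanical formal calculus carried out in $\C[x,x^{-1}][\varphi][[z^{1/2k}, z^{-1/2k}]]$.
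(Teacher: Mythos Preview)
Your proof is correct and conceptually close to the paper's, but the execution differs in two ways worth noting.

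For (\ref{identity-in-rep}), the paper proceeds in three stages: it first invokes Proposition 2.3 of \cite{BDM} to handle the purely bosonic sector $\C[x,x^{-1}]$, then verifies the identity on the single generator $\varphi$ by direct computation, and finally uses the multiplicativity property $\Delta_k^{(x,\varphi)}(z)\cdot(\varphi x^n) = (\Delta_k^{(x,\varphi)}(z)\cdot\varphi)(\Delta_k^{(x,\varphi)}(z)\cdot x)^n$ (citing Proposition 3.11 of \cite{Barron-memoirs}) together with a Leibniz-rule expansion to extend to $\varphi x^n$. You instead package that same multiplicativity as the single substitution formula $h_0(x)+\varphi h_1(x)\mapsto h_0(F(x))+\varphi G(x)h_1(F(x))$ and do one unified chain-rule computation. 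The underlying input is identical (the automorphism property of the exponentiated even derivations), but your route avoids the case splitting and makes the reduction to the two scalar identities in $F$ and $G$ explicit.

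For (\ref{second-identity-in-rep}), the paper repeats the direct computation with the explicit inverse $\Delta_k^{(x,\varphi)}(z)^{-1}\cdot(x,\varphi) = \bigl((x+z)^{1/k}-z^{1/k},\,\varphi k^{-1/2}(x+z)^{(1-k)/2k}\bigr)$, again treating $\varphi$ by hand and declaring the extension to $\varphi x^n$ analogous. Your conjugation argument, deriving the second identity algebraically from the first via $A^{-1}(\partial_z A)A^{-1} = -\partial_z A^{-1}$, is genuinely different and more economical: it bypasses the explicit form of the inverse entirely. This is a standard and sound manipulation in the formal setting at hand, since $\Delta_k^{(x,\varphi)}(z)$ and its inverse are honest invertible elements and the product rule for $\partial_z$ applies.
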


\begin{proof}  Since $L_j(x, \varphi) \cdot x^n = -x^{j+1} \frac{\partial}{\partial x}\cdot x^n$ for $n \in \Z$, we have from Proposition 2.3 in \cite{BDM} that Equations (\ref{identity-in-rep}) and (\ref{second-identity-in-rep}) hold in $(\End \; \C[x, x^{-1}]) [[z^{1/k}, z^{-1/k}]] \subset (\End \; \C[x, x^{-1}][\varphi]) [[z^{1/2k}, z^{-1/2k}]]$.  

Next we observe that in $\C[x, x^{-1}] [\varphi][[z^{1/2k}, z^{-1/2k}]]$, we have
\begin{eqnarray*}
\lefteqn{
- \Delta_k^{(x,\varphi)} (z) \frac{\partial}{\partial x} \cdot \varphi + k^{-1}
z^{\frac{1}{k} - 1} \frac{\partial}{\partial x} \Delta_k^{(x,\varphi)} (z) \cdot \varphi } \\
&=& k^{-1} z^{\frac{1}{k} - 1}  \frac{\partial}{\partial x} \varphi k^\frac{1}{2} ( z^\frac{1}{k} + x)^{\frac{1}{2} (k-1)}\\
&=&  z^{\frac{1}{k} - 1} \varphi k^{-\frac{1}{2}}\frac{1}{2}(k-1) ( z^\frac{1}{k} + x)^{\frac{1}{2} (k-3)}\\
&=&  \frac{\partial}{\partial z}  \varphi k^{\frac{1}{2}}( z^\frac{1}{k} + x)^{\frac{1}{2} (k-1)}\\
&=& \frac{\partial}{\partial z} \Delta_k^x (z) \cdot \varphi.
\end{eqnarray*}

By Proposition 3.11 in \cite{Barron-memoirs} and Proposition 2.3 in \cite{BDM}, we have 
\begin{eqnarray*}
\Delta_k^{(x,\varphi)} (z) \cdot  (\varphi x^n) &=& (\Delta_k^{(x,\varphi)} (z) \cdot  \varphi)   (\Delta_k^{(x,\varphi)} (z) \cdot  x^n)  \\
 &=& (\Delta_k^{(x,\varphi)} (z) \cdot  \varphi)   (\Delta_k^{(x,\varphi)} (z) \cdot  x)^n  
\end{eqnarray*}
for all $n \in \Z$.  Therefore
\begin{eqnarray*}
\lefteqn{
- \Delta_k^{(x,\varphi)} (z) \frac{\partial}{\partial x} \cdot \varphi x^n  + 
k^{-1} z^{\frac{1}{k} - 1} \frac{\partial}{\partial x} 
\Delta_k^{(x,\varphi)} (z) \cdot \varphi x^n } \\
&=&  -n  \Delta_k^{(x, \varphi)} (z) \cdot \left( \varphi x^{n - 1}
\frac{\partial}{\partial x} \cdot x  \right) + k^{-1} z^{\frac{1}{k} - 1} 
\frac{\partial}{\partial x} (\Delta_k^{(x,\varphi)} (z) \cdot  \varphi)   (\Delta_k^{(x,\varphi)} (z) \cdot  x)^n  \\
&=&  -n \left( \Delta_k^{(x, \varphi)} (z) \cdot\varphi\right) \left(\Delta_k^{(x, \varphi)} (z) \cdot x\right)^{n - 1}  \left(\Delta_k^{(x,\varphi)} (z)
\frac{\partial}{\partial x} \cdot x \right) \\
& & \quad + k^{-1} z^{\frac{1}{k} - 1} 
\left( \frac{\partial}{\partial x} (\Delta_k^{(x,\varphi)} (z) \cdot  \varphi) \right)   (\Delta_k^{(x,\varphi)} (z) \cdot  x)^n \\
& & \quad + k^{-1} z^{\frac{1}{k} - 1} 
(\Delta_k^{(x,\varphi)} (z) \cdot  \varphi)  n (\Delta_k^{(x,\varphi)} (z) \cdot  x)^{n-1} \frac{\partial}{\partial x}   (\Delta_k^{(x,\varphi)} (z) \cdot  x) \\
&=& \left( - \Delta_k^{(x,\varphi)} (z) \frac{\partial}{\partial x} \cdot \varphi + k^{-1}
z^{\frac{1}{k} - 1} \frac{\partial}{\partial x} \Delta_k^{(x,\varphi)} (z) \cdot \varphi\right) (\Delta_k^{(x,\varphi)} (z) \cdot  x)^n  \\
& & \quad +  (\Delta_k^{(x,\varphi)} (z) \cdot  \varphi)  n  (\Delta_k^{(x,\varphi)} (z) \cdot  x)^{n-1} \left(   - \Delta_k^{(x,\varphi)} (z) \frac{\partial}{\partial x} \cdot x \right. \\
& & \quad \left. + k^{-1}
z^{\frac{1}{k} - 1} \frac{\partial}{\partial x} \Delta_k^{(x,\varphi)} (z) \cdot x    \right)\\
&=& \left(\frac{\partial}{\partial z}   (\Delta_k^{(x,\varphi)} (z) \cdot  \varphi)  \right) (\Delta_k^{(x,\varphi)} (z) \cdot  x)^n   \\
& & \quad +  (\Delta_k^{(x,\varphi)} (z) \cdot  \varphi)  n  (\Delta_k^{(x,\varphi)} (z) \cdot  x)^{n-1} \left(\frac{\partial}{\partial z} (\Delta_k^{(x,\varphi)} (z) \cdot  x) \right)\\
&=& \frac{\partial}{\partial z} \left(  (\Delta_k^{(x,\varphi)} (z) \cdot  \varphi)   (\Delta_k^{(x,\varphi)} (z) \cdot  x)^n \right) \\
&=& \frac{\partial}{\partial z} \left( \Delta_k^{(x, \varphi)} (z) \cdot \varphi x^n \right)\\
\end{eqnarray*}
for all 
$n \in \Z$.  Equation (\ref{identity-in-rep}) follows by linearity in $\C[x, x^{-1}] [\varphi][[z^{1/2k}, z^{-1/2k}]]$.

Similarly, noting that 
\[\Delta_k^{(x,\varphi)} (z)^{-1} (x, \varphi) = \left( (x + z)^{1/k} - z^{1/k}, \; \varphi k^{-\frac{1}{2}} (x + z)^{\frac{1}{2k}(1-k)} \right)\]
we have
\begin{eqnarray*}
\lefteqn{
- \Delta_k^{(x,\varphi)} (z)^{-1} \frac{\partial}{\partial x} \cdot \varphi + k 
z^{- \frac{1}{k} + 1} \frac{\partial}{\partial x} 
\Delta_k^{(x,\varphi)} (z)^{-1} \cdot \varphi } \\
&=&  k z^{- \frac{1}{k} + 1} \frac{\partial}{\partial x} \varphi k^{-\frac{1}{2}} (x + z)^{\frac{1}{2k}(1-k)}\\
&=&  z^{- \frac{1}{k} + 1} \varphi k^{-\frac{1}{2}} \frac{1}{2}(1-k) (x + z)^{\frac{1}{2k}(1-3k)}\\
&=& k z^{- \frac{1}{k} + 1} \frac{\partial}{\partial z} \varphi k^{-\frac{1}{2}} (x + z)^{\frac{1}{2k}(1-k)} 
\\
&=& k z^{- \frac{1}{k} + 1} \frac{\partial}{\partial z} 
\Delta_k^{(x,\varphi)} (z)^{-1} \cdot \varphi .
\end{eqnarray*}
The proof of identity (\ref{second-identity-in-rep}) acting on $\varphi x^n$ for 
$n \in \Z$ is analogous to the proof of identity 
(\ref{identity-in-rep}) on $\varphi x^n$ for $n \in \Z$.  Identity 
(\ref{second-identity-in-rep}) then follows by linearity.  
\end{proof}

Let $\mathfrak{L}$ be the Virasoro algebra with basis $L_j$, $j \in \Z$, and central charge $d \in \C$.  The identities (\ref{identity-in-rep}) and (\ref{second-identity-in-rep})  can be thought of as identities for the representation of the Virasoro algebra on $\C[x, x^{-1}][\varphi]$ given by $L_j \mapsto -L_j(x, \varphi)$, for $j \in \Z,$ with central charge  equal to zero.  We want to prove the corresponding identity for certain other representations of the Virasoro algebra, in 
particular for vertex operator superalgebras.  We do this by following the method of proof used in Chapter 4 of \cite{H}, and extending \cite{BDM}.  Let $\kappa^{1/2}$ be another formal commuting variable.  We first prove the identity in $\mathcal{ U}_\Pi (\mathfrak{L})  [[z^{1/2k}, z^{-1/2k}]][[\kappa^{1/2}, \kappa^{-1/2}]]$ where $\mathcal{ U}_\Pi 
(\mathfrak{L})$ is a certain extension of the universal enveloping algebra for the Virasoro algebra, and then letting $\kappa^{1/2} = k^{1/2}$, the identity will follow in $(\End \; V) [[z^{1/2k}, 
z^{-1/2k}]]$ where $V$ is a certain type of module for the Virasoro algebra.

We want to construct an extension of $\mathcal{ U}(\mathfrak{L})$, the 
universal enveloping algebra for the Virasoro algebra, in which the operators
$(\kappa^{1/2})^{-2L_0}$ and $(z^{1/2k})^{ (1 - k)2L_0}$ can be defined.  Let 
$V_\Pi$ be a vector space over $\mathbb{C}$ with basis $\{P_j : j \in 
\frac{1}{2}\mathbb{Z}\}$. Let $\mathcal{ T}(\mathfrak{L} \oplus V_\Pi)$ be the tensor 
algebra generated by the direct sum of $\mathfrak{L}$ and $V_\Pi$, and 
let $\mathcal{ I}$ be the ideal of $\mathcal{ T}(\mathfrak{L} \oplus V_\Pi)$ 
generated by
\begin{multline}
\lefteqn{\Bigl\{L_i \otimes L_j - L_j \otimes L_i - [L_i,L_j], \; L_i \otimes d - d 
\otimes L_i, \; P_r \otimes P_s - \delta_{rs} P_r,  \Bigr. }   \\
P_r \otimes d -
d \otimes P_r, \; \Bigl. P_r \otimes L_j - L_j \otimes P_{r + j} : \; i,j \in 
\mathbb{Z}, \; r,s \in \frac{1}{2} \Z \Bigr\}.
\end{multline}
Define $\mathcal{ U}_\Pi (\mathfrak{L}) = \mathcal{ T}(\mathfrak{L} \oplus V_\Pi)/
\mathcal{ I}$.  For any formal commuting variable $t^{1/2}$ and for $n\in\mathbb{Z}$, we define
\[(t^{1/2})^{n2L_0} = \sum_{j \in \frac{1}{2} \mathbb{Z}} P_j t^{nj} \in \mathcal{ U}_\Pi(\mathfrak{L})[[t^{1/2},t^{-1/2}]] .\]
Note then that $(\kappa^{1/2})^{-2L_0}$ and $(z^{1/2k})^{ (1 - k)2L_0}$ are well-defined elements of 
\[U_\Pi(\mathfrak{L})[[z^{1/2k},z^{-1/2k}]][[\kappa^{1/2}, \kappa^{-1/2}]].\]

In $\mathcal{ U}_\Pi (\mathfrak{L})[[z^{1/2k}, z^{-1/2k}]]
[[\kappa^{1/2}, \kappa^{-1/2}]]$, define 
\[\Delta_k^\mathfrak{L} (z) = \exp \Biggl( \sum_{j \in \ZZ} 
a_j z^{- j/k} L_j \Biggr) (\kappa^{1/2})^{-2L_0} (z^{1/2k})^{ (1 - k)2L_0}.\]

\begin{prop}\label{in-universal}
In $\mathcal{ U}_\Pi (\mathfrak{L})[[z^{1/2k}, z^{-1/2k}]]
[[\kappa^{1/2}, \kappa^{-1/2}]]$, we have
\begin{eqnarray}
\Delta_k^\mathfrak{L} (z) L_{-1} - \kappa^{-1} z^{\frac{1}{k} - 1} L_{-1} 
\Delta_k^\mathfrak{L} (z) 
&= &\frac{\partial}{\partial z}\Delta_k^\mathfrak{L} (z) , \label{identity-in-universal}\\
\Delta_k^\mathfrak{L} (z)^{-1} L_{-1} - \kappa z^{-\frac{1}{k} + 1} L_{-1} 
\Delta_k^\mathfrak{L} (z)^{-1} 
&=& k z^{-\frac{1}{k} + 1} \frac{\partial}{\partial z} \Delta_k^\mathfrak{L} (z)^{-1} . 
\label{second-identity-in-universal}
\end{eqnarray} 
\end{prop}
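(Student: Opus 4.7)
My plan is to prove the two identities by direct computation in $\mathcal{U}_\Pi(\mathfrak{L})[[z^{\pm 1/2k}]][[\kappa^{\pm 1/2}]]$, following the structural template of the preceding representation-level proof. The key algebraic inputs are the Virasoro relation $[L_j, L_{-1}] = (j+1)L_{j-1}$ together with the commutation
\[
(t^{1/2})^{n \cdot 2L_0}\, L_{-1} \ = \ t^n\, L_{-1}\, (t^{1/2})^{n \cdot 2L_0}, \qquad n \in \Z,
\]
valid in $\mathcal{U}_\Pi(\mathfrak{L})[[t^{\pm 1/2}]]$, which follows from the defining relation $P_r L_{-1} = L_{-1} P_{r-1}$ in $\mathcal{U}_\Pi(\mathfrak{L})$.

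For identity (\ref{identity-in-universal}), I would first commute $L_{-1}$ through the two rightmost factors of the decomposition $\Delta_k^\mathfrak{L}(z) = \exp\bigl(\sum_{j \in \ZZ} a_j z^{-j/k} L_j\bigr) \cdot (\kappa^{1/2})^{-2L_0} \cdot (z^{1/2k})^{(1-k)2L_0}$. These two commutations produce the scalar factors $\kappa^{-1}$ and $z^{(1-k)/k}$, respectively, exactly matching the coefficient $\kappa^{-1} z^{1/k - 1}$ of $L_{-1} \Delta_k^\mathfrak{L}(z)$ appearing in the statement. The leftover is a single commutator of $L_{-1}$ with the exponential factor, which I would expand via $\mathrm{Ad}(\exp X) L_{-1} = \exp(\mathrm{ad}\, X) L_{-1}$ using the Virasoro brackets. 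Separately, $\partial_z \Delta_k^\mathfrak{L}(z)$ would be computed by the product rule, differentiating $(z^{1/2k})^{(1-k)2L_0}$ via its $P_r$-expansion. Matching the two computed expressions term-by-term in the Virasoro generators then yields identity (\ref{identity-in-universal}).

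A cleaner alternative is to apply the representation $\rho: \mathcal{U}_\Pi(\mathfrak{L}) \to \End(\C[x, x^{-1}][\varphi])$ sending $L_j \mapsto L_j(x, \varphi)$, $d \mapsto 0$, and $P_r$ to the projection onto the $r$-eigenspace of $-L_0(x, \varphi)$. Under $\rho$, with $\kappa^{1/2}$ specialized to $k^{1/2}$, identity (\ref{identity-in-universal}) becomes exactly identity (\ref{identity-in-rep}) of the preceding proposition, which is already proved; the parallel reduction handles (\ref{second-identity-in-universal}) via (\ref{second-identity-in-rep}). The main obstacle will be justifying the lift from the representation back to $\mathcal{U}_\Pi(\mathfrak{L})$: both sides depend on $\kappa^{1/2}$ only through the single factor $(\kappa^{1/2})^{-2L_0} = \sum_r P_r \kappa^{-r}$ and the explicit $\kappa^{\pm 1}$ prefactor, so after collecting terms at each $L_0$-weight the universal enveloping algebra identity reduces to a family of weight-homogeneous identities in $\mathcal{U}(\mathfrak{L})$, each of which is faithfully detected by $\rho$ acting on the corresponding weight subspace of $\C[x, x^{-1}][\varphi]$.
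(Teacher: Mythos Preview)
Your proposal is correct and your ``cleaner alternative'' is precisely the paper's argument: the paper first rewrites the left-hand side of (\ref{identity-in-universal}) as $\kappa^{-1} z^{1/k-1}\bigl[e^{\sum a_j z^{-j/k} L_j}, L_{-1}\bigr](\kappa^{1/2})^{-2L_0}(z^{1/2k})^{(1-k)2L_0}$, observes that both sides involve only $L_j$ with $j \in \N$, and then concludes by comparison with the already-established representation-level identity (\ref{identity-in-rep}) under $L_j \mapsto -L_j(x,\varphi)$ (note the sign: your map $L_j \mapsto L_j(x,\varphi)$ is not a Lie algebra homomorphism). Your direct-computation route (a) would also succeed but is not what the paper does, and your faithfulness discussion is more explicit than the paper's, which simply asserts that the comparison suffices.
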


\begin{proof}  In $\mathcal{ U}_\Pi (\mathfrak{L})[[z^{1/2k}, 
z^{-1/2k}]][[\kappa^{1/2}, \kappa^{-1/2}]]$, we have
\begin{eqnarray*}
\lefteqn{
\Delta_k^\mathfrak{L} (z) L_{-1} - \kappa^{-1} z^{\frac{1}{k} - 1} L_{-1} 
\Delta_k^\mathfrak{L} (z) } \\
&=& \kappa^{-1} z^{\frac{1}{k} - 1} \left[ e^{\sum_{j \in \ZZ} 
a_j z^{- j/k} L_j}, L_{-1} \right]  (\kappa^{1/2})^{-2L_0} (z^{1/2k})^{ (1 - k)2L_0} \hspace{1in}
\end{eqnarray*}
\begin{eqnarray*}
&=&  \kappa^{-1} z^{\frac{1}{k} - 1} \sum_{n \in \ZZ} \frac{1}{n!}\Biggl( 
\sum_{j_1,...,j_n \in \ZZ} a_{j_1} \cdots a_{j_n}  z^{-(j_1 + \cdots + j_n)/k} 
\Biggr. \\ 
& & \ \ \Biggl. \biggl( \sum_{i = 1,...,n} L_{j_1} L_{j_2} \cdots
L_{j_{i-1}}[L_{j_i},L_{-1}] L_{j_{i + 1}} \cdots L_{j_n} \biggr)\!\Biggr)
 (\kappa^{1/2})^{-2L_0} (z^{1/2k})^{ (1 - k)2L_0}
\end{eqnarray*} 
which is a well-defined element of $\mathcal{ U}_\Pi (\mathfrak{L})
[[z^{1/2k}, z^{-1/2k}]][[\kappa^{1/2}, \kappa^{-1/2}]]$ involving 
only elements $L_j$ with $j \in {\N}$.  The right-hand side of 
(\ref{identity-in-universal}) also involves only $L_j$ for $j \in {\N}$.  
Thus comparing with the identity (\ref{identity-in-rep}) for the 
representation $L_j \mapsto - L_j(x, \varphi)$, the identity (\ref{identity-in-universal}) must hold.  The proof of (\ref{second-identity-in-universal}) is analogous.  \end{proof}

Let $V$ be a module for the Virasoro algebra satisfying $V = \coprod_{n 
\in \frac{1}{2} \mathbb{Z}} V_n$.  For $j \in \mathbb{Z}$, let $L(j) \in \End \; V$  
and $c \in \C$ be the representation images of $L_j$ and $d$, 
respectively, for the Virasoro algebra.  Assume that for $v \in V_n$, 
we have $L(0)v = nv$.  For any formal variable $t^{1/2}$, define $(t^{1/2})^{j2L(0)} 
\in (\mathrm{End} \; V)[[t^{1/2},t^{-1/2}]]$ by
\[(t^{1/2})^{j2L(0)}v = t^{jn}v \]
for $v \in V_n$.  Or equivalently, let $P(n) : V \rightarrow V_n$
be the projection {}from $V$ to the homogeneous subspace of weight $n$ for
$n \in \frac{1}{2} \mathbb{Z}$.  Then
\[(t^{1/2})^{j2L(0)}v = \sum_{n \in \mathbb{Z}} t^{jn} P(n) v \]
for $v \in V$.  The elements $P(n) \in \mathrm{End} \; V$ can be thought
of as the representation images of $P_n$ in the algebra $\mathcal{ U}_\Pi
(\mathfrak{L})$.

Note that for $k$ a positive integer, and $k^{1/2}$ a fixed square root of $k$, we have that  $(k^{1/2})^{-2L(0)}$ is a well-defined element of 
$\mathrm{End} \; V$ and for $z^{1/2k}$ a formal commuting variable, $(z^{1/2k})^{(1-k)2L(0)}$ is a well-defined element of $(\mathrm{End} \; V)[[z^{1/2k},z^{-1/2k}]]$.

In $(\End \; V)[[z^{1/2k}, z^{-1/2k}]]$, define 
\begin{equation}\label{Delta-for-a-module}
\Delta_k^V (z) = \exp \Biggl( \sum_{j \in \ZZ} a_j z^{- j/k} L(j) 
\Biggr) (k^\frac{1}{2})^{-2L(0)} (z^{\frac{1}{2k}})^{( 1 - k )2 L(0)} .
\end{equation}
{}From Proposition 4.1.1 in \cite{H} extended to this setting of a $\frac{1}{2} \Z$-graded $\mathcal{L}$-module, or equivalently, from Proposition 3.32 of \cite{Barron-memoirs} restricted to the Virasoro subalgebra of the N=1 Neveu-Schwarz algebra, and from Proposition 
\ref{in-universal}, we 
obtain the following corollary.

\begin{cor}\label{c2.5}
In $(\End \; V)[[z^{1/2k}, z^{-1/2k}]]$, we have
\begin{eqnarray}
\Delta_k^V (z) L(-1) - \frac{1}{k} z^{1/k - 1} L(-1) 
\Delta_k^V (z) \!
&=& \! \frac{\partial}{\partial z}\Delta_k^V (z) , \label{identity in voa}\\ 
\Delta_k^V (z)^{-1} L(-1) - k z^{- 1/k + 1} L(-1) 
\Delta_k^V (z)^{-1}  \! 
&=& \! k z^{- 1/k + 1} \frac{\partial}{\partial z}
\Delta_k^V (z)^{-1}  . \label{second identity in voa}
\end{eqnarray}
In particular, the identities hold for $V$ being any vertex operator
superalgebra. 
\end{cor}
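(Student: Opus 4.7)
The plan is to deduce the corollary from Proposition \ref{in-universal} by applying a representation map to both sides and then specializing the auxiliary variable $\kappa^{1/2}$. Since $V$ is a $\frac{1}{2}\Z$-graded Virasoro module with $L(0)$ acting by weight, the assignments $L_j \mapsto L(j)$, $d \mapsto c$, and $P_n \mapsto P(n)$, where $P(n)$ is the projection onto $V_n$, define an algebra homomorphism $\pi \colon \mathcal{U}_\Pi(\mathfrak{L}) \to \End V$. First I would extend $\pi$ coefficient-wise to formal power series in $z^{\pm 1/2k}$ and $\kappa^{\pm 1/2}$, observing that the image of $(\kappa^{1/2})^{-2L_0} = \sum_{j \in \frac{1}{2}\Z} P_j \kappa^{-j}$ on any $v \in V_n$ is the single term $\kappa^{-n} v$, so the substitution $\kappa^{1/2} = k^{1/2}$ is well-defined on $V$ and yields the operator $(k^{1/2})^{-2L(0)}$ appearing in the definition (\ref{Delta-for-a-module}) of $\Delta_k^V(z)$.

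Next I would apply $\pi$ to the two identities of Proposition \ref{in-universal} and set $\kappa^{1/2} = k^{1/2}$. This converts $\Delta_k^{\mathfrak{L}}(z)$ into $\Delta_k^V(z)$, and the coefficients $\kappa^{-1}z^{1/k-1}$ and $\kappa z^{-1/k+1}$ on the left-hand sides of (\ref{identity-in-universal}) and (\ref{second-identity-in-universal}) become $k^{-1} z^{1/k-1}$ and $k z^{-1/k+1}$ respectively, reproducing exactly the left-hand sides of (\ref{identity in voa}) and (\ref{second identity in voa}). The right-hand sides transport over by the same mechanism, since $\frac{\partial}{\partial z}$ commutes with $\pi$ and with the specialization in $\kappa^{1/2}$.

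The main subtlety, and the step I would take the greatest care with, is justifying that the representation map and the specialization $\kappa^{1/2} = k^{1/2}$ can be applied term-by-term to the series occurring in $\Delta_k^{\mathfrak{L}}(z)$, in particular to $\exp\bigl(\sum_{j \in \ZZ} a_j z^{-j/k} L_j\bigr)$. This is standard provided one can control the action on a fixed homogeneous vector $v \in V_n$: each application of a product $L_{j_1} \cdots L_{j_m}$ with $j_i \in \ZZ$ lands in $V_{n - j_1 - \cdots - j_m}$, so for any fixed power of $z^{1/k}$ in the output only finitely many terms of the exponential contribute, and the result lies in a well-defined element of $(\End V)[[z^{1/2k}, z^{-1/2k}]]$. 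This is precisely the content of Proposition 4.1.1 of \cite{H} in the integer-graded case, and of Proposition 3.32 of \cite{Barron-memoirs} in the $\frac{1}{2}\Z$-graded setting obtained by restricting from the Neveu-Schwarz algebra to its Virasoro subalgebra.

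Once these ingredients are combined, the corollary for a general vertex operator superalgebra $V$ is immediate: a VOSA is in particular a $\frac{1}{2}\Z$-graded module for the Virasoro algebra on which $L(0)$ acts by weight, so the hypotheses of the general statement apply and both identities (\ref{identity in voa}) and (\ref{second identity in voa}) hold in $(\End V)[[z^{1/2k}, z^{-1/2k}]]$.
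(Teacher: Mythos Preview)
Your proposal is correct and follows essentially the same approach as the paper: the paper's proof simply cites Proposition~\ref{in-universal} together with Proposition~4.1.1 of \cite{H} (extended to the $\frac{1}{2}\Z$-graded setting) or equivalently Proposition~3.32 of \cite{Barron-memoirs}, and you have accurately unpacked what those citations amount to---namely the representation map $\pi$ from $\mathcal{U}_\Pi(\mathfrak{L})$ to $\End V$, the specialization $\kappa^{1/2}=k^{1/2}$, and the finiteness argument justifying that the resulting series are well defined on each homogeneous vector.
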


\section{The setting of $(1 \; 2 \; \cdots k$)-twisted $V^{\otimes k}$-modules and the operators $Y_M(\Delta_k(x)u, x^{1/k})$ for a $V$-module $(M, Y_M)$}\label{tensor-product-setting-section}
\setcounter{equation}{0}

Now we turn our attention to tensor product vertex operator superalgebras.
Let $V=(V,Y,{\bf 1},\omega)$ be a vertex operator superalgebra, and let $k$ be a
fixed positive integer.  Then by Remark \ref{VOSAs-tensor-remark},  $V^{\otimes k}$ is also a
vertex operator superalgebra, and the permutation group $S_k$
acts naturally on $\vtk$ as signed automorphisms.  That is
$(j \; j+1) \cdot (v_1 \otimes v_2 \otimes \cdots \otimes v_k) = (-1)^{|v_j||v_{j+1}|} (v_1 \otimes v_2 \otimes \cdots v_{j-1} \otimes v_{j+1} \otimes v_j \otimes v_{j+2} \otimes \cdots \otimes v_k)$, and we take this to be a left action so that, for instance
\begin{eqnarray}
\qquad (1 \; 2 \cdots  k) : V \otimes V \otimes \cdots \otimes V \! \! \! \! &\longrightarrow & \! \! \! \! V \otimes V \otimes \cdots \otimes V\\
v_1 \otimes v_2 \otimes \cdots \otimes v_k \! \! \! \! \! \! & \mapsto & \! \! \! \! \!  \! (-1)^{|v_1|(|v_2| + \cdots + |v_k|)} v_2 \otimes v_3 \otimes \cdots \otimes v_k \otimes v_1. \nonumber
\end{eqnarray}
(Note that in \cite{BDM}, this action was given as a right action.  For convenience we make the change here to a left action as in \cite{BHL}).

Let $g=(1 \; 2 \; \cdots \; k)$.  In the next section we will, construct a functor $T_g^k$ {}from the category of weak  $V$-modules to the category of weak $g$-twisted modules for $\vtk$ for the case when $k$ is odd.  This construction will be based on the operators $Y_M(\Delta_k(x)u, x^{1/k})$ for a $V$-module $(M, Y_M)$ and for $\Delta_k(x)$ the operator on $V$ defined and studied in the previous section.  Thus in this section, we establish several properties of these operators.  In particular, we will also show why our construction of $g$-twisted $V^{\otimes k}$-modules will not follow through for $k$ even.  

For $k$ odd, we construct these weak $g$-twisted $V^{\otimes k}$-modules
by first defining $g$-twisted vertex operators on a weak
$V$-module $M$ for a set of generators which are mutually local (see \cite{Li2}).  These $g$-twisted vertex operators generate a local system which is a vertex superalgebra.  We then construct a homomorphism of vertex 
superalgebras {}from $V^{\otimes k}$ to this local system which thus gives a
weak $g$-twisted $V^{\otimes k}$-module structure on $M$.

For $v\in V$, and $k$ any positive integer, denote by $v^j\in\vtk$, for $j= 1, \dots, k$, the vector whose $j$-th tensor factor is $v$ and whose other tensor factors are ${\bf 1}$.  Then
$gv^j=v^{j-1}$ for $j=1,\dots,k$ where $0$ is understood to be $k$. 

Suppose that $W$ is a weak $g$-twisted $\vtk$-module, and let $\eta$ be a fixed primitive $k$-the root of unity.  We first make some general observations for this setting.  First, it follows {}from the definition of twisted module (cf. (\ref{limit-axiom})) that the $g$-twisted vertex operators on $W$ satisfy
\begin{equation}
Y_g(v^{j+1},x) =  Y_g(g^{-j} v^1,x) = \lim_{x^{1/k}\to \eta^{j}x^{1/k}}  Y_g(v^1,x).
\end{equation}
Since $\vtk$ is generated by $v^j$ for $v\in V$ and $j=1,...,k,$ 
the twisted vertex operators $Y_g(v^1,x)$ for $v\in V$ determine all
the twisted vertex operators $Y_g(u,x)$ on $W$ for any $u\in\vtk$. This
observation is very important in our construction of twisted modules.

Secondly, if $u,v\in V$ are of homogeneous sign, then by (\ref{twisted-Jacobi}) the twisted Jacobi identity for $Y_g(u^1,x_1)$ and $Y_g(v^1,x_2)$ is
\begin{multline}\label{k1}
x^{-1}_0\delta\left(\frac{x_1-x_2}{x_0}\right)
Y_g(u^1,x_1)Y_g(v^1,x_2)\\
-(-1)^{ |u||v|} x^{-1}_0\delta\left(\frac{x_2-x_1}{-x_0}\right)
Y_g(v^1,x_2)Y_g(u^1,x_1)\\
=\frac{1}{k}x_2^{-1}\sum_{j=0}^{k-1}\delta\Biggl(\eta^j\frac{(x_1-x_0)^
{1/k}}{x_2^{1/k}}\Biggr)Y_g(Y(g^ju^1,x_0)v^1,x_2).
\end{multline} 
Since $g^{-j}u^1=u^{j+1}$, we see that $Y(g^{-j}u^1,x_0)v^1$ only involves nonnegative integer powers of $x_0$ unless $j=0\ (\mod \; k).$   Thus the we have the supercommutator
\begin{multline}\label{k2}
[Y_g(u^1,x_1),Y_g(v^1,x_2)] \\
= \; \Res_{x_0}\frac{1}{k}x_2^{-1}\delta
\Biggl(\frac{(x_1-x_0)^{1/k}}{x_2^{1/k}}\Biggr)Y_g(Y(u^1,x_0)v^1,x_2) .
\end{multline}
This shows that the component operators of $Y_g(u^1,x)$ for $u\in V$ on
$W$ form a Lie superalgebra.

For $u\in V$ and $\D(z) = 
\D^V(x)$ given by (\ref{Delta-for-a-module}), define
\begin{equation}
\bar Y(u,x)=Y_M(\D(x)u,x^{1/k}) .
\end{equation}
For example, as in \cite{BDM}, taking $u=\omega$, and recalling that $a_2= (k^2-1)/12$, we have
\begin{eqnarray}\label{sun1}
\bY(\omega,x) &=&Y_M \left(\frac{x^{2(1/k - 1)}}{k^2}\Bigl(\omega + a_2
\frac{c}{2}x^{-2/k} \Bigr),x^{1/k}\right) \\
&=&  \frac{x^{2(1/k - 1)}}{k^2}Y(\omega,x^{1/k})
+\frac{(k^2-1)c}{24k^2}x^{-2} \nonumber
\end{eqnarray}
where $c$ is the central charge of $V$.

\begin{rem}\label{wrong-space-remark}{\em
We have
\begin{equation}\label{where-operators-live}
Y_M(\D(x)u,x^{1/k}) \in \left\{ \begin{array}{ll}
 (\mathrm{End}\, M) [[x^{1/k}, x^{-1/k}]] & \mbox{if $k$ is odd}\\
\\
 x^{|v|/2k} (\mathrm{End}\, M) [[x^{1/k}, x^{-1/k}]] & \mbox{ if $k$ is even}
\end{array}
\right. .
\end{equation}
When we put a weak $g$-twisted $\vtk$-module structure on $M$, this operator 
$\bar{Y}(u,x) = Y_M(\D(x)u,x^{1/k})$ will be the twisted vertex operator acting on $M$
associated to $u^1$, when $k$ is odd.  However, since this operator contains powers of $x^{1/2k}$ for $k$ even, it cannot be the twisted vertex operator associated to $u^1$ in this case.  Rather, as noted in Remark \ref{generalized-remark} below, they are a type of ``generalized" twisted vertex operator, generalizing the notion of ``relativized" twisted vertex operator for lattice vertex operator superalgebras as constructed in \cite{DL2}.}
\end{rem}

We next study the properties of the operators $\bY(u,x)$, following and generalizing the results of \cite{BDM}.

\begin{lem}\label{l3.1} For $u\in V$
\[\bY(L(-1)u,x)=\frac{d}{dx}\bY(u,x).\]
\end{lem}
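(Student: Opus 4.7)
The plan is to prove the identity by a direct computation using the chain rule, the standard $L(-1)$-derivative property for vertex operators on a module, and the commutator identity for $\Delta_k(x)$ with $L(-1)$ established in Corollary \ref{c2.5}.

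First I would apply the chain rule to $\bar{Y}(u,x) = Y_M(\Delta_k(x)u, x^{1/k})$, differentiating in the formal variable $x$. Two contributions arise: one from the $x$-dependence coming through the first argument $\Delta_k(x)u$, and one from the $x$-dependence coming through the second argument $x^{1/k}$. For the first, I simply pull the $x$-derivative inside to get $Y_M\!\left(\tfrac{\partial}{\partial x}\Delta_k(x)u,\, x^{1/k}\right)$. For the second, I use the standard module identity $\tfrac{d}{dy} Y_M(v,y) = Y_M(L(-1)v, y)$ together with $\tfrac{d}{dx} x^{1/k} = \tfrac{1}{k} x^{1/k - 1}$, yielding $\tfrac{1}{k}x^{1/k - 1} Y_M(L(-1)\Delta_k(x)u, x^{1/k})$.

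Next I would invoke Corollary \ref{c2.5}, specifically identity (\ref{identity in voa}), which rearranges to
\[
\Delta_k(x) L(-1) = \tfrac{1}{k} x^{1/k - 1} L(-1)\Delta_k(x) + \tfrac{\partial}{\partial x}\Delta_k(x).
\]
Applying this to $u$ and then inserting the result into $Y_M(\,\cdot\,, x^{1/k})$, I obtain exactly the sum of the two terms from the chain rule computation above. Thus $\bar{Y}(L(-1)u, x) = \tfrac{d}{dx}\bar{Y}(u,x)$, and the lemma is proved.

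There is essentially no obstacle here: the lemma is a bookkeeping consequence of Corollary \ref{c2.5} once one recognizes that the chain-rule derivative of $x^{1/k}$ produces precisely the factor $\tfrac{1}{k} x^{1/k - 1}$ that appears in the commutator formula for $\Delta_k(x)$ with $L(-1)$. The main conceptual point to flag is that Corollary \ref{c2.5} was designed exactly so that $\Delta_k(x)$ intertwines the $L(-1)$-translation on $V$ with the $L(-1)$-translation appropriate to the variable change $x \mapsto x^{1/k}$; this lemma is the first place where this intertwining property is used in its $L(-1)$-derivative form on a module.
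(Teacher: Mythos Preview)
Your proposal is correct and takes essentially the same approach as the paper: both use Corollary \ref{c2.5} (identity (\ref{identity in voa})) together with the module $L(-1)$-derivative property and the chain rule for $x^{1/k}$. The only cosmetic difference is that the paper starts from $\bar Y(L(-1)u,x)$ and works toward $\frac{d}{dx}\bar Y(u,x)$, while you run the computation in the reverse direction.
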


\begin{proof} By Corollary \ref{c2.5}, the analogous proof given in \cite{BDM} (Lemma 3.2 of \cite{BDM}) in the nonsuper setting follows in the setting of vertex operator superalgebras.  That is, we have
\begin{eqnarray*}
\bY(L(-1)u,x) &=& Y_M(\D(x)L(-1)u,x^{1/k})\\
&=& Y_M(\frac{d}{dx}\D(x)u,x^{1/k}) + k^{-1} x^{1/k - 1}Y_M(L(-1)\D(x)u,x^{1/k})\\
&=& Y_M(\frac{d}{dx}\D(z)u,x^{1/k}) + \left. k^{-1} x^{1/k - 1}\frac{d}{dy}
Y_M(\D(x)u,y) \right|_{y=x^{1/k}}\\
\end{eqnarray*}
\begin{eqnarray*}
&=& Y_M(\frac{d}{dx}\D(x)u,x^{1/k}) + \left.\frac{d}{dy}Y_M(\D(x)u,y^{1/k})
\right|_{y=x}\\
&=& \frac{d}{dx}Y_M(\D(x)u,x^{1/k})\\
&=& \frac{d}{dx}\bY(u,x)
\end{eqnarray*}
as desired. \end{proof}

\begin{lem}\label{l3.2} For $u,v\in V$ of homogeneous sign, we have the supercommutator
\begin{multline}\label{first-supercommutator}
 [\bY(u,x_1),\bY(v,x_2)] \\
= \; \Res_{x_0} \frac{x_2^{-1}}{ k} 
\delta\Biggl(\frac{(x_1-x_0)^{1/k}}{x_2^{1/k}}\Biggr)\bY(Y(u,x_0)v,x_2) \left( \frac{x_1 - x_0}{x_2} \right)^{ \frac{|u|}{2k}(1-k)}, 
\end{multline}
or equivalently
\begin{multline}\label{second-supercommutator}
 [\bY(u,x_1),\bY(v,x_2)] \\
= \left\{ \begin{array}{ll}
\Res_{x_0}\frac{x_2^{-1}}{k}
\delta\Biggl(\frac{(x_1-x_0)^{1/k}}{x_2^{1/k}}\Biggr)\bY(Y(u,x_0)v,x_2) & \mbox{if $k$ is odd}\\
\Res_{x_0} \frac{x_2^{-1}}{k}
\delta\Biggl(\frac{(x_1-x_0)^{1/k}}{x_2^{1/k}}\Biggr)\bY(Y(u,x_0)v,x_2) \left( \frac{x_1 - x_0}{x_2} \right)^{ \frac{|u|}{2k}} & \mbox{if $k$ is even}
\end{array} . \right.
\end{multline}
\end{lem}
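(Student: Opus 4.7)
The approach is to derive the formula from the usual (super)commutator formula for the vertex operators $Y_M$ on the weak $V$-module $M$, combined with the conjugation formula for $\Delta_k(z)$ established in Proposition \ref{psun1}, together with the delta function identities developed in Section 2.

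I begin by taking $\Res_{x_0}$ of the Jacobi identity for $Y_M$ with inputs $a = \Delta_k(x_1) u$, $b = \Delta_k(x_2) v$, $y_1 = x_1^{1/k}$, $y_2 = x_2^{1/k}$, which yields
\begin{equation*}
[\bar Y(u, x_1), \bar Y(v, x_2)] = \Res_{x_0}\, x_2^{-1/k}\delta\!\left(\frac{x_1^{1/k} - x_0}{x_2^{1/k}}\right) Y_M\bigl(Y(\Delta_k(x_1) u, x_0) \Delta_k(x_2) v, x_2^{1/k}\bigr).
\end{equation*}
I then change the integration variable from $x_0$ to $y_0$ by setting $x_0 = x_1^{1/k} - (x_1 - y_0)^{1/k}$, so that on the support of the delta, $y_0 = x_1 - x_2$. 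Under this substitution the delta function becomes $\delta\!\left(\frac{(x_1 - y_0)^{1/k}}{x_2^{1/k}}\right)$, the Jacobian contributes $k^{-1}(x_1 - y_0)^{1/k-1}$, and the delta substitution $(x_1 - y_0)^{1/k} = x_2^{1/k}$ (equivalently $x_1 - y_0 = x_2$ on the support) collapses the combined prefactor $x_2^{-1/k} \cdot k^{-1}(x_1 - y_0)^{1/k - 1}$ to $k^{-1} x_2^{-1}$.

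Next I apply Proposition \ref{psun1} with $z = x_1 - y_0$ and $z_0 = y_0$, using that $z + z_0 = x_1$ and $(z + z_0)^{1/k} - z^{1/k} = x_0$; this rewrites $Y(\Delta_k(x_1) u, x_0) = \Delta_k(x_1 - y_0) Y(u, y_0) \Delta_k(x_1 - y_0)^{-1}$, so the integrand becomes $\Delta_k(x_1 - y_0) Y(u, y_0) \Delta_k(x_1 - y_0)^{-1} \Delta_k(x_2) v$. On the delta support the exponential and $(k^{1/2})^{-2L(0)}$ components of $\Delta_k(x_1 - y_0)^{-1}\Delta_k(x_2)$ cancel since $(x_1 - y_0)^{1/k} = x_2^{1/k}$ trivializes all $j/k$-powers for $j \in \Z$; the surviving $(z^{1/(2k)})^{(1-k)2L(0)}$ contribution, acting on a vector of weight $n \in \Z + |v|/2$, produces the scalar $\left(\frac{x_2}{x_1 - y_0}\right)^{(1-k)|v|/(2k)}$ (the integer-weight part being trivial on support). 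Pushing this scalar through $Y(u, y_0)$ and then absorbing the outer $\Delta_k(x_1 - y_0)$ as $\Delta_k(x_2)$ acting on $Y(u, y_0) v$ (which has sign $|u| + |v|$) produces a second scalar $\left(\frac{x_1 - y_0}{x_2}\right)^{(1-k)(|u| + |v|)/(2k)}$; the two combine to $\left(\frac{x_1 - y_0}{x_2}\right)^{(1-k)|u|/(2k)} \Delta_k(x_2) Y(u, y_0) v$. Recognizing $Y_M(\Delta_k(x_2)(\cdot), x_2^{1/k}) = \bar Y(\cdot, x_2)$ and renaming $y_0 \mapsto x_0$ gives (\ref{first-supercommutator}).

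Finally, (\ref{second-supercommutator}) follows from (\ref{first-supercommutator}) by noting that on the delta support $((x_1 - x_0)/x_2)^{1/k} = 1$, so any integer multiple of $1/k$ in the exponent of $(x_1 - x_0)/x_2$ reduces to $1$. When $k$ is odd, $1 - k$ is even and $(1-k)|u|/(2k) = \tfrac{1-k}{2} \cdot \tfrac{|u|}{k}$ is an integer multiple of $1/k$, so the factor equals $1$. When $k$ is even, $(1-k)|u|/(2k) - |u|/(2k) = -|u|/2$ is a multiple of $1/k$ (since $k/2 \in \Z$), so on the support the exponent reduces to $|u|/(2k)$. The main technical obstacle is the careful management of $1/(2k)$-th power branches under the delta-supported substitution $x_1 - y_0 = x_2$, since the delta function $\delta\!\left(\frac{(x_1 - y_0)^{1/k}}{x_2^{1/k}}\right)$ identifies only the $k$-th roots of $(x_1-y_0)/x_2$ and leaves a $\pm 1$ branch ambiguity at the $2k$-th root level; this ambiguity is precisely what produces the case distinction between $k$ odd and $k$ even and, as observed in Remark \ref{wrong-space-remark}, is what forces the construction to fail for even $k$.
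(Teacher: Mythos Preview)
Your proof is correct and follows essentially the same approach as the paper: the module supercommutator formula for $Y_M$, the same change of variable $x_0 = x_1^{1/k}-(x_1-y_0)^{1/k}$, Proposition~\ref{psun1}, and the $\delta$-function substitution to extract the residual half-power. The only difference is ordering---the paper first switches to the form $x_1^{-1}\delta\bigl((x_2+x_0)^{1/k}/x_1^{1/k}\bigr)$ via (\ref{delta-function3}), substitutes $x_1^{1/k}\to(x_2+x_0)^{1/k}$ (tracking the leftover $x_1^{|u|/2k}$ factor explicitly, cf.~(\ref{where-operators-live})), and only then applies Proposition~\ref{psun1} with the single variable $z=x_2$, whereas you apply Proposition~\ref{psun1} with the composite $z=x_1-y_0$ first (implicitly using the formal associativity $((x_1-y_0)+y_0)^{1/k}=x_1^{1/k}$) and then perform the $\delta$-substitution on the $\Delta_k$-factors piecewise; the paper's ordering is slightly cleaner for bookkeeping but the content is the same.
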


\begin{proof} The supercommutator formula for the weak $V$-module $M$ is given by 
\begin{equation}\label{commutator for Lemma 3.3}
[Y_M(u,x_1),Y_M(v,x_2)] \; = \; \Res_{x}x_2^{-1}\delta\left(\frac{x_1-x}{x_2}
\right)Y_M(Y(u,x)v,x_2)
\end{equation}
which is a consequence of the Jacobi identity on $M$.  Replacing $Y_M(u,x_1)$ and $Y_M(v,x_2)$ by $Y_M(\D(x_1)u,x_1^{1/k})$ and 
$Y_M(\D(x_2)v,x_2^{1/k})$, respectively, in the supercommutator
formula, we have the supercommutator
\begin{multline}\label{substitution equation}
[\bY(u,x_1),\bY(v,x_2)] \\
=  \Res_{x}x_2^{-1/k}\delta\Biggl(
\frac{x_1^{1/k}-x}{x_2^{1/k}}\Biggr)
Y_M(Y(\D(x_1)u,x)\D(x_2)v,x_2^{1/k}).
\end{multline}

We want to make the change of variable $x = x_1^{1/k}-(x_1-x_0)^{1/k}$
where by $x_1^{1/k}-(x_1-x_0)^{1/k}$ we mean the power series expansion
in positive powers of $x_0$.  For $n \in \mathbb{Z}$, it was shown in \cite{BDM} that
\begin{equation}
\left. (x_1^{1/k} - x)^n \right|_{x = x_1^{1/k}-(x_1-x_0)^{1/k}} = (x_1 - x_0)^{n/k}.
\end{equation}

Thus substituting $x =  x_1^{1/k}-(x_1-x_0)^{1/k}$ into 
\[x_2^{-1/k}\delta\Biggl(\frac{x_1^{1/k}-x}{x_2^{1/k}}\Biggr)
Y_M(Y(\D(x_1)u,x)\D(x_2)v,x_2^{1/k}) \]
we have a well-defined power series given by
\[x_2^{-1/k} \delta\Biggl(\frac{(x_1-x_0)^{1/k}}{x_2^{1/k}}\Biggr)
 Y_M(Y(\D(x_1)u, x_1^{1/k}-(x_1-x_0)^{1/k})\D(x_2)v,x_2^{1/k}) .\] 

Let $f(z_1,z_2,x)$ be a complex analytic function in $z_1, z_2$, and
$x$, and let $h(z_1,z_2,z_0)$ be a complex analytic function in $z_1,
z_2$, and $z_0$. Then if $f(z_1,z_2,h(z_1,z_2,z_0))$ is well defined,
and thinking of $z_1$ and $z_2$ as fixed, i.e., considering 
$f(z_1,z_2,h(z_1,z_2,z_0))$ as a Laurent series in $z_0$, by the
residue theorem of complex analysis, we have
\begin{equation}\label{residue change of variables}
\Res_x f(z_1,z_2,x)=\Res_{z_0} \left( \frac{\partial}{\partial z_0}
h(z_1,z_2,z_0) \right)f(z_1,z_2,h(z_1,z_2,z_0))
\end{equation} 
which of course remains true for $f$ and $h$ formal power series in
their respective variables.  Thus making the change of variable
$x= h(x_1,x_2,x_0) = x_1^{1/k}-(x_1-x_0)^{1/k}$, using (\ref{substitution
equation}), (\ref{residue change of variables}), the 
$\delta$-function identity (\ref{delta-function3}), (\ref{where-operators-live}), and Proposition 
\ref{psun1}, we obtain
\begin{eqnarray*}
\lefteqn{ [\bY(u,x_1),\bY(v,x_2)] = }\\  
&=& \Res_{x_0}\frac{1}{k}x_2^{-1/k} (x_1-x_0)^{1/k-1}\delta\Biggl(
\frac{(x_1-x_0)^{1/k}}{x_2^{1/k}}\Biggr) \\
& & \quad Y_M(Y(\D(x_1)u,x_1^{1/k}-(x_1-x_0)^{1/k})
\D(x_2)v,x_2^{1/k})\\
&=& \Res_{x_0}\frac{1}{k} x_2^{-1} \delta\Biggl(
\frac{(x_1-x_0)^{1/k}}{x_2^{1/k}}\Biggr)  \\
& & \quad 
Y_M(Y(\D(x_1)u,x_1^{1/k}-(x_1-x_0)^{1/k})\D(x_2)v,x_2^{1/k})\\
&=&  \Res_{x_0}\frac{1}{k}x_1^{-1} \delta\Biggl(
\frac{(x_2+x_0)^{1/k}}{x_1^{1/k}}\Biggr) \\
& & \quad 
Y_M(Y(\D(x_1)u,x_1^{1/k}-(x_1-x_0)^{1/k})\D(x_2)v,x_2^{1/k})
\end{eqnarray*}
Now we observe that  
\begin{multline}
Y_M (Y(\D(x_1)u,x_1^{1/k}-(x_1-x_0)^{1/k})\D(x_2)v,x_2^{1/k}) \\
\in 
\left\{ \begin{array}{ll}
x^{|u|/2k}_1 x_2^{|v|/2k} (\mathrm{End} \, M)  [[x_0]] [[x_1^{1/k}, x_1^{-1/k}]][[x_2^{1/k}, x_2^{-1/k}]] & \mbox{if $k$ is even}\\
\\
(\mathrm{End} \, M)  [[x_0]] [[x_1^{1/k}, x_1^{-1/k}]][[x_2^{1/k}, x_2^{-1/k}]] & \mbox{if $k$ is odd} 
\end{array}
\right. .
\end{multline}
Thus using the $\delta$-function substitution property and Proposition \ref{psun1}, we obtain
\begin{eqnarray*}
\lefteqn{ [\bY(u,x_1),\bY(v,x_2)] = }\\
&=& \Res_{x_0}\frac{1}{k}x_1^{-1}\delta\Biggl(
\frac{(x_2+x_0)^{1/k}}{x_1^{1/k}}\Biggr)   \left( \frac{x_2 + x_0}{x_1} \right)^{\frac{|u|}{2k}(k-1)} \\
& & \quad  Y_M(Y(\D(x_2+x_0)u,(x_2+x_0)^{1/k}-x_2^{1/k})
\D(x_2)v,x_2^{1/k})\\
&=& \Res_{x_0}\frac{1}{k}x_2^{-1}\delta\Biggl(
\frac{(x_1-x_0)^{1/k}}{x_2^{1/k}}\Biggr)   \left( \frac{x_1 - x_0}{x_2} \right)^{-\frac{|u|}{2k}(k-1)} \\
& & \quad Y_M(Y(\D(x_2+x_0)u,(x_2+x_0)^{1/k}-x_2^{1/k}) 
\D(x_2)v,x_2^{1/k}) \\
&=&  \Res_{x_0}\frac{1}{k}x_2^{-1}\delta\Biggl(\frac{(x_1-x_0)^
{1/k}}{x_2^{1/k}}\Biggr)Y_M(\D(x_2)Y(u,x_0)v,x_2^{1/k})  \left( \frac{x_1 - x_0}{x_2} \right)^{\frac{|u|}{2k}(1-k)} 
\end{eqnarray*}
\begin{eqnarray*}
&=& \Res_{x_0}\frac{1}{k}x_2^{-1}\delta\Biggl(\frac{(x_1-x_0)^
{1/k}}{x_2^{1/k}}\Biggr)\bY(Y(u,x_0)v,x_2)  \left( \frac{x_1 - x_0}{x_2} \right)^{\frac{|u|}{2k}(1-k)} , \hspace{.3in}
\end{eqnarray*}
giving (\ref{first-supercommutator}).  Equation (\ref{second-supercommutator}) follows from the properties of the $\delta$-function. \end{proof}

\section{The construction of a weak $(1 \; 2 \; \cdots \; k$)-twisted $V^{\otimes k}$-module structure on a weak $V$-module $(M, Y_M)$ for $k$ odd}\label{tensor-product-twisted-construction-section}
\setcounter{equation}{0}

Let $M=(M,Y_M)$ be a weak $V$-module. Now we begin our construction of a weak $g$-twisted $V^{\otimes k}$-module structure on $M$ when $k$ is an odd positive integer and $g = (1 \; 2\; \cdots \; k)$.  Since establishing the properties of $\Delta_k(x)$ as in Section \ref{Delta-section} in the super case and proving the supercommutator (\ref{first-supercommutator}), our construction of a weak $g$-twisted $V^{\otimes k}$-module structure on $M$ in the case when $k$ is odd follows the same spirit as the construction in the nonsuper case given in \cite{BDM}, but now using the full power of local systems for twisted operators in the super case as established by Li in \cite{Li2}.

For $u\in V$ set
\begin{equation}  \label{define-g-twist} 
Y_g(u^1,x) = \bY(u,x)\quad \mbox{and}   \quad
Y_g(u^{j+1},x) = \lim_{x^{1/k}\to \eta^{j} x^{1/k}}  Y_g(u^1,x)  . 
\end{equation}

\begin{rem}\label{generalized-remark}{\em
From the supercommutator (\ref{first-supercommutator}) for $\bar{Y}$, we see that defining $g$-twisted operators as above for the case when $k$ is even, can not result in a twisted module structure on $M$ due to appearance of the extra term involving $(x_2^{-1} (x_1 - x_0))^{|u|/2k}$.  In particular, the most we could hope for would be a type of ``generalized" $g$-twisted $V^{\otimes k}$-module structure in the spirit of \cite{DL1} and the ``relativized" twisted vertex operators for lattice vertex operator superalgebras as constructed in \cite{DL2}.  
}
\end{rem}

Note that $Y_g(u^j,x)=\sum_{p=0}^{k-1}Y_g^p(u^j,x)$
where $Y_g^p(u^j,x)=\sum_{n\in \frac{p}{k} + \Z}u^j_nx^{-n-1}$.

\begin{lem}\label{l3.3} Let $u,v\in V$ of homogeneous sign.  Then we have the supercommutator
\begin{multline}\label{3.1}
[Y_g(u^j,x_1),Y_g(v^m,x_2)] \\
= \; \Res_{x_0}\frac{1}{k}x_2^{-1}\delta
\Biggl(\frac{\eta^{j-m}(x_1-x_0)^{1/k}}{x_2^{1/k}}\Biggr)Y_g((Y(u,x_0)v)^m,x_2) 
\end{multline}
where $(Y(u,x_0)v)^m=\sum_{n\in\Z}(u_nv)^m x_0^{-n-1},$
and
\begin{multline}\label{3.2}
[Y_g^p(u^j,x_1),Y_g(v^m,x_2)] \\
= \Res_{x_0}\frac{1}{k}x_2^{-1}\eta^{(m-j)p}\left(\frac{x_1-x_0}
{x_2}\right)^{-p/k}\delta\left(\frac{x_1-x_0}
{x_2}\right)Y_g((Y(u,x_0)v)^m,x_2).
\end{multline}
\end{lem}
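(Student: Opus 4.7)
The plan is to derive both identities directly from the $k$-odd form of Lemma \ref{l3.2}, combined with the defining substitution $Y_g(u^{j+1}, x) = \lim_{x^{1/k} \to \eta^j x^{1/k}} \bar{Y}(u, x)$. This substitution acts linearly on each power of $x^{1/k}$, so it commutes with $\Res_{x_0}$ and with supercommutators of operators in a different variable.

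For (\ref{3.1}), I apply two independent substitutions to the identity of Lemma \ref{l3.2}: $x_1^{1/k} \to \eta^{j-1} x_1^{1/k}$, which turns the first slot of the supercommutator into $Y_g(u^j, x_1)$, and $x_2^{1/k} \to \eta^{m-1} x_2^{1/k}$, which turns $\bar{Y}(Y(u, x_0)v, x_2)$ into $Y_g((Y(u, x_0)v)^m, x_2)$. On the right-hand side of Lemma \ref{l3.2}, the only $x_1$-dependence sits inside $(x_1 - x_0)^{1/k}$; its expansion in $x_0$ has $x_1^{1/k}$ as the sole fractional factor in each term (the remaining factors are integer powers of $x_1$, which are inert under $x_1^{1/k} \to \eta^{j-1} x_1^{1/k}$ because $\eta^k = 1$), so it picks up exactly one factor of $\eta^{j-1}$. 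Likewise the $x_2^{1/k}$ in the denominator of the $\delta$-function picks up $\eta^{m-1}$. The ratio $\eta^{j-1}/\eta^{m-1} = \eta^{j-m}$ produces (\ref{3.1}).

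For (\ref{3.2}), I expand the $\delta$-function on the right-hand side of (\ref{3.1}) using identity (\ref{delta-function2}):
\begin{equation*}
x_2^{-1}\delta\Biggl(\frac{\eta^{j-m}(x_1-x_0)^{1/k}}{x_2^{1/k}}\Biggr) = \sum_{q=0}^{k-1} \eta^{(j-m)q}\left(\frac{x_1-x_0}{x_2}\right)^{q/k} x_2^{-1}\delta\left(\frac{x_1-x_0}{x_2}\right).
\end{equation*}
The $q$-th summand has powers of $x_1$ lying in $q/k + \Z$. On the left-hand side of (\ref{3.1}), the piece $Y_g^p(u^j, x_1) = \sum_{n \in p/k + \Z} (u^j)_n^g x_1^{-n-1}$ contributes powers of $x_1$ in $-p/k + \Z$. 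Matching fractional parts forces $q \equiv -p \pmod{k}$; substituting $q = k - p$ (with $q = 0$ for $p = 0$), using $y\delta(y) = \delta(y)$ to convert $\left(\frac{x_1-x_0}{x_2}\right)^{1-p/k}$ to $\left(\frac{x_1-x_0}{x_2}\right)^{-p/k}$, and rewriting $\eta^{(j-m)(k-p)} = \eta^{(m-j)p}$, yields (\ref{3.2}) uniformly for $p = 0, 1, \ldots, k-1$. The computation is mostly bookkeeping; the one substantive point is that the $k$-odd form of Lemma \ref{l3.2} carries no stray $\left(\frac{x_1-x_0}{x_2}\right)^{|u|/(2k)}$ factor that would obstruct the clean match with the $Y_g^p$ decomposition, which is where the parity hypothesis on $k$ enters crucially.
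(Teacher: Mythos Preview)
Your proof is correct and follows essentially the same approach as the paper's: start from the $k$-odd case of Lemma~\ref{l3.2}, apply the defining substitutions $x_1^{1/k}\to\eta^{j-1}x_1^{1/k}$ and $x_2^{1/k}\to\eta^{m-1}x_2^{1/k}$ to obtain (\ref{3.1}), and then read off (\ref{3.2}) by decomposing the $\delta$-function. The paper's proof is a terse three-sentence sketch of exactly this; you have simply filled in the bookkeeping (tracking which powers of $x_1$ survive the substitutions, matching the fractional grading, and using $y\delta(y)=\delta(y)$ to normalize the exponent) that the paper leaves implicit.
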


\begin{proof} By Lemma \ref{l3.2}, equation (\ref{3.1}) holds if $j=m =1$ and $k$ odd.
Then using (\ref{define-g-twist}), we obtain equation (\ref{3.1})
for any $j,m = 1,...,k$. Equation (\ref{3.2}) is a direct consequence of
(\ref{3.1}). \end{proof}


By Lemma \ref{l3.3} for $u,v\in V$ of homogeneous sign, there exists a positive integer
$N$ such that 
\begin{equation}\label{a3.3}
[Y_g(u^j,x_1), Y_g(v^m,x_2)](x_1-x_2)^N=0.
\end{equation}
Taking the limit $x^{1/k} \longrightarrow \eta^{j-1} x^{1/k}$
in Lemma \ref{l3.1}, for $j=1,\dots,k$, we have
\begin{equation}
Y_g(L(-1)u^j,x)=\frac{d}{dx}Y_g(u^j,x).
\end{equation} 
Thus the operators $Y_g(u^j,x)$ for $u \in V$, and for $j=1,\dots,k$ are mutually local and generate a local system $A$ of weak twisted vertex operators on $(M, L(-1))$ in the sense of \cite{Li2}. 

Let $\rho$ be a map {}from $A$ to $A$ such that $\rho Y_g(u^j,x)=Y_g(u^{j-1},x)$ for
$u\in V$ and $j=1,...,k$.  By Theorem 3.14 of \cite{Li2}\footnote{There is a
typo in the statement of Theorem 3.14 in \cite{Li2}.  The $V$ in the theorem
should be $A$.  That is, the main result of the theorem is that the
local system $A$ of the theorem has the structure of a vertex
superalgebra.}, the local system $A$ generates a vertex superalgebra we
denote by $(A,Y_A)$, and $\rho$ extends to an automorphism of $A$ of
order $k$ such that $M$ is a natural weak generalized $\rho$-twisted $A$-module
in the sense that $Y_A(\alpha(x),x_1)=\alpha(x_1)$ for $\alpha(x)\in A$
are $\rho$-twisted vertex operators on $M$.

\begin{rem} {\em $\rho$ is given by 
\begin{equation}
\rho a(x) =  \lim_{x^{1/k} \to \eta^{-1}x^{1/k}}  a(x) 
\end{equation}
for $a(x)\in A$; see \cite{Li2}.}
\end{rem}

Let $A^j=\{c(x)\in A| \rho c(x)=\eta^j c(x)\}$ and $a(x)\in A^j$ of homogeneous sign in $A$.
For any integer $n$ and $b(x)\in A$ of homogeneous sign, the operator $a(x)_{n}b(x)$ is an
element of $A$ given by
\begin{eqnarray}\label{3.3}
a(x)_{n}b(x)={\rm Res}_{x_{1}}{\rm
Res}_{x_{0}}\left(\frac{x_{1}-x_{0}}{x}\right)^{j/k}x_{0}^{n}\cdot X
\end{eqnarray}
where 
\[ X=x_{0}^{-1}\delta\left(\frac{x_{1}-x}{x_{0}}\right)a(x_{1})
b(x)- (-1)^{|a||b|} x_{0}^{-1}
\delta\left(\frac{x-x_{1}}{-x_{0}}\right)b(x)a(x_{1}).\]
Or, equivalently, $a(x)_{n}b(x)$ is defined by:
\begin{eqnarray}\label{3.4}
\sum_{n\in \mathbb{Z}}\left(a(x)_{n}b(x)\right)x_{0}^{-n-1}
={\rm Res}_{x_{1}}\left(\frac{x_{1}-x_{0}}{x}\right)^{j/k}
\cdot X.
\end{eqnarray}
Thus following \cite{Li2}, for $a(z)\in A^j$, we define $Y_A(a(z),x)$ 
by setting $Y_A(a(z), x_0)b(z)$ equal to (\ref{3.4}).
  
\begin{lem}\label{l3.5} For $u, v \in V$ of homogeneous sign, we have the supercommutator
\[ [Y_A(Y_g(u^j,x),x_1), Y_A(Y_g(v^m,x),x_2)]=0\]
for $j,m = 1,\dots, k$, with $j \neq m$.
\end{lem}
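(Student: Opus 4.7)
The goal is to show that in the vertex superalgebra $A$, the elements $Y_g(u^j, x)$ and $Y_g(v^m, x)$ satisfy $Y_A$-supercommutativity for $j \neq m$. By the standard commutator formula in vertex superalgebras, this reduces to showing that all nonnegative $n$-th products vanish in $A$:
\[
(Y_g(u^j, x))_n (Y_g(v^m, x)) = 0 \quad \text{in } A, \quad \text{for all } n \geq 0.
\]

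My plan is to decompose $Y_g(u^j, x) = \sum_{p=0}^{k-1} Y_g^p(u^j, x)$ into its $\rho$-eigenspace components with $Y_g^p(u^j, x) \in A^p$, and to compute $(Y_g^p(u^j, x))_n (Y_g(v^m, x))$ for each $p$ separately via Li's formula~(\ref{3.3}). That formula expresses the $n$-th product as an iterated residue in $x_1, x_0$ of $((x_1-x_0)/x)^{p/k}\, x_0^n$ times the $\delta$-regularized expression $X$. Inserting the explicit supercommutator of equation~(\ref{3.2}) from Lemma~\ref{l3.3} --- which contributes the prefactor $\eta^{(m-j)p}$ together with the fractional factor $((x_1-x_0)/x_2)^{-p/k}$ and the rational $\delta$-function $\delta((x_1-x_0)/x_2)$ --- the fractional twist from (\ref{3.3}) and the counter-twist from (\ref{3.2}) combine via the $\delta$-function identities (\ref{delta-function1})--(\ref{delta-function3}) and match against the fractional modes of $Y_g^p(u^j, x_1)$. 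After these combinations, the remaining residue factor is $p$-independent, so the $p$-dependence of $(Y_g^p(u^j, x))_n (Y_g(v^m, x))$ is carried entirely by the phase $\eta^{(m-j)p}$.

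Summing over $p$ then produces the geometric sum $\sum_{p=0}^{k-1}\eta^{(m-j)p}$. For $j \neq m$ with $j, m \in \{1, \ldots, k\}$, $\eta^{m-j}$ is a nontrivial $k$-th root of unity, so
\[
\sum_{p=0}^{k-1}\eta^{(m-j)p} = \frac{\eta^{k(m-j)}-1}{\eta^{m-j}-1} = 0.
\]
Hence $(Y_g(u^j, x))_n (Y_g(v^m, x)) = 0$ in $A$ for every $n \geq 0$, and the supercommutator of the $Y_A$-vertex operators vanishes in $A$, as claimed.

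The main obstacle is the careful residue bookkeeping with fractional exponents: the twist $((x_1-x_0)/x)^{p/k}$ from (\ref{3.3}) and the counter-twist $((x_1-x_0)/x_2)^{-p/k}$ from (\ref{3.2}) do not commute with naive $\delta$-function substitution (because the relevant $f \cdot x_0^{-1}\delta(y/x_0) = f(y) \cdot x_0^{-1}\delta(y/x_0)$ identity is delicate for non-integer $f$), so one must invoke the consistency identities underlying Li's construction of the local system vertex superalgebra to realize the cancellation as a genuine equality of formal power series rather than a heuristic "distributional" identity. Once this is in place, the isolation of the phase $\eta^{(m-j)p}$ and the resulting root-of-unity cancellation complete the argument.
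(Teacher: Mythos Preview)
Your proposal is correct and follows essentially the same approach as the paper's. The paper defers to Lemma~3.6 of \cite{BDM}, whose proof proceeds exactly along the lines you describe: decompose $Y_g(u^j,x)$ into $\rho$-eigencomponents $Y_g^p(u^j,x)\in A^p$, compute the singular part of $Y_A(Y_g^p(u^j,z),x_0)Y_g(v^m,z)$ via Li's formula (\ref{3.3})/(\ref{3.4}) and the supercommutator (\ref{3.2}), observe that after the $\delta$-function identities the only surviving $p$-dependence is the phase $\eta^{(m-j)p}$, and conclude by the geometric-sum cancellation $\sum_{p=0}^{k-1}\eta^{(m-j)p}=0$ for $j\neq m$. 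Your acknowledgment that the fractional-power residue bookkeeping is the delicate point is apt; in \cite{BDM} this is handled by rewriting $X$ via locality and the three-term identity (exactly as in the proof of Lemma~\ref{l3.7} here) before applying (\ref{delta-function1}) to absorb the twist factor $((x_1-x_0)/z)^{p/k}$ against the fractional modes of $Y_g^p(u^j,x_1)$, which is what you sketch.
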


\begin{proof} The proof is analogous to the proof of Lemma 3.6 in \cite{BDM} where we use the vertex superalgebra structure of $A$ rather than just the vertex algebra structure and we use the supercommutators of Lemma \ref{l3.3}. \end{proof}

Let $Y_g(u^i,z)_n$ for $n \in \mathbb{Z}$ denote the coefficient of $x^{-n-1}$ in the vertex operator $Y_A(Y_g(u^i,z), x)$ for $u \in V$.  That is
\[Y_A(Y_g(u^i,z), x) = \sum_{n \in \Z} Y_g(u^i,z)_n \; x^{-n-1} \in (\mathrm{End} \; A) [[x, x^{-1}]].\]

\begin{lem}\label{l3.6} For $u_1,...,u_k\in V$, we have  
\begin{multline*}
Y_A(Y_g(u^1_1,z)_{-1}\cdots Y_g(u_{k-1}^{k-1},z)_{-1}Y_g(u_k^k,z),x)  \\
 = Y_A(Y_g(u^1_1,z),x)\cdots Y_A(Y_g(u_{k-1}^{k-1},z),x)Y_A(Y_g(u_k^k,z),x) .
\end{multline*}
\end{lem}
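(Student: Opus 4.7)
The plan is to prove this by induction on $k$. The $k=1$ case is the tautology $Y_A(Y_g(u_1^1,z),x) = Y_A(Y_g(u_1^1,z),x)$, so I will focus on the inductive step.

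The key ingredient is the following general fact in any vertex superalgebra $(A,Y_A)$: if $a(z), b(z) \in A$ are such that $Y_A(a(z),x_1)$ and $Y_A(b(z),x_2)$ supercommute as operators on $A$, then
\[Y_A(a(z)_{-1}b(z),x) = Y_A(a(z),x)\, Y_A(b(z),x).\]
This follows from the standard iterate identity $Y_A(a_{-1}b,x) = \,:\!Y_A(a,x)Y_A(b,x)\!:$ (obtained by extracting the $x_0^0$ coefficient from $\Res_{x_1}$ of the Jacobi identity with $x_0 = x_1 - x_2$) together with the observation that the normal-ordered product coincides with the ordinary product whenever the two operator families have identically vanishing supercommutator, so that no reordering of negative and positive modes is required and no sign arises.

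Applying induction, I would set
\[b(z) := Y_g(u_2^2,z)_{-1}\, Y_g(u_3^3,z)_{-1} \cdots Y_g(u_{k-1}^{k-1},z)_{-1}\, Y_g(u_k^k,z) \in A.\]
By Lemma \ref{l3.5}, $Y_A(Y_g(u_1^1,z),x_1)$ supercommutes with each $Y_A(Y_g(u_j^j,z),x_2)$ for $j = 2, \ldots, k$. Since the iterate operation $c(z)_{-1}d(z)$ is constructed {}from the corresponding vertex operators $Y_A(c(z),x)$ and $Y_A(d(z),x)$, supercommutativity is preserved under iteration, giving that $Y_A(Y_g(u_1^1,z),x_1)$ supercommutes with $Y_A(b(z),x_2)$. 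Applying the key identity and then the inductive hypothesis for $Y_A(b(z),x)$ yields the claim.

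The main technical point to verify carefully is the implication ``supercommutativity of operator families implies iterate equals product,'' which requires attention to parity signs in the super setting but simplifies because the supercommutator is assumed to vanish identically rather than merely modulo singularities. A secondary check is that iterates of mutually supercommuting operator families themselves remain supercommuting with the original family; this is a direct consequence of the twisted Jacobi identity \eqref{twisted-Jacobi} combined with the vanishing supercommutator hypothesis.
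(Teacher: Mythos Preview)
Your argument is correct and follows essentially the same route as the paper: both reduce to the normal-ordering identity $Y_A(a_{-1}b,x)=Y_A(a,x)\,Y_A(b,x)$ under vanishing supercommutator (via Lemma~\ref{l3.5}) and then induct, with you being slightly more explicit than the paper about why supercommutativity with $Y_A(Y_g(u_1^1,z),\,\cdot\,)$ passes to the iterated element $b(z)$. One small slip: that closure of supercommutativity under $(-1)$-products follows from the \emph{ordinary} Jacobi identity in the vertex superalgebra $A$ (equivalently, from the normal-ordered product formula you already invoked), not from the twisted identity~\eqref{twisted-Jacobi}, which pertains to the action on $M$ rather than to the algebra structure on $A$.
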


\begin{proof} From the Jacobi identity on $A$, and Lemma \ref{l3.5}, we have, for $1\leq i<j\leq k$,
\begin{eqnarray*}
\lefteqn{Y_A(Y_g(u^i, z)_{-1} Y_g(v^j,z), x)}\\
&=& \mathrm{Res}_{x_1} \mathrm{Res}_{x_0} x_0^{-1} \left( x^{-1}_0\delta\left(\frac{x_1-x}{x_0}\right) Y_A(Y_g(u^i,z),x_1)Y_A(Y_g(v^j,z),x) \right. \\
& & \quad \left. - (-1)^{|u||v|}
x^{-1}_0\delta\left(\frac{x-x_1}{-x_0}\right) Y_A(Y_g(v^j,z),x)Y_A(Y_g(u^i,z),x_1)\right)\\
&=& \mathrm{Res}_{x_1} \Bigl(  (x_1-x)^{-1} Y_A(Y_g(u^i,z),x_1)Y_A(Y_g(v^j,z),x)  \\
& & \quad - (-1)^{|u||v|} (x-x_1)^{-1} Y_A(Y_g(v^j,z),x)Y_A(Y_g(u^i,z),x_1)\Bigr)\\
&=& \sum_{n <0} Y_g(u^i,z)_n x^{-n-1} Y_A(Y_g(v^j,z),x) \\
& & \quad - (-1)^{|u||v|}  Y_A(Y_g(v^j,z),x) \sum_{n \geq 0} Y_g(u^i,z)_n x^{-n-1}  \\
&=&  \sum_{n \in \Z} Y_g(u^i,z)_n x^{-n-1} Y_A(Y_g(v^j,z),x) \\
&=& Y_A(Y_g(u^i, z), x) Y_A(Y_g(v^j,z), x).
\end{eqnarray*}
The result follows by induction.  
\end{proof}

Define the map $f : \vtk \longrightarrow A$ by
\begin{eqnarray*}
f: \vtk &\longrightarrow& A\\
u_1\otimes\cdots \otimes u_k = (u_1^1)_{-1}\cdots (u_{k-1}^{k-1})_{-1}u^k_k \! \! \! &\mapsto& \! \! \! 
Y_g(u_1^1,z)_{-1}\cdots 
Y_g(u_{k-1}^{k-1},z)_{-1}Y_g(u_k^k,z) 
\end{eqnarray*}
for $u_1,...,u_k\in V$. Then $f(u^j)=Y_g(u^j,z).$

\begin{lem}\label{l3.7} $f$ is a homomorphism of vertex superalgebras.
\end{lem}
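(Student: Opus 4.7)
The plan is to verify the two defining properties of a vertex superalgebra homomorphism: preservation of the vacuum and the intertwining identity $f(Y_{\vtk}(a, x)b) = Y_A(f(a), x)f(b)$ for $a, b \in \vtk$.  For the vacuum, since $L(j){\bf 1} = 0$ for all $j \geq 0$, we have $\D(x){\bf 1} = {\bf 1}$, so $\bY({\bf 1}, x) = Y_M({\bf 1}, x^{1/k}) = 1_M$, whence $Y_g({\bf 1}^j, z) = 1_M$ for each $j$.  Consequently $f({\bf 1}^{\otimes k})$ is the iterated $(-1)$-product of copies of $1_M$, which is the vacuum of $A$.

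For the intertwining identity, I would first reduce to the case $a = u^i$ for $u \in V$ and $i \in \{1, \dots, k\}$.  Two ingredients make this reduction work: the vectors $u^i$ generate $\vtk$ under iterated $(-1)$-products (indeed $(u_1^1)_{-1} \cdots (u_{k-1}^{k-1})_{-1} u_k^k = u_1 \otimes \cdots \otimes u_k$), and Lemma \ref{l3.6} encodes the parallel iterated $(-1)$-product structure on the $A$-side as a product of the operators $Y_A(Y_g(u_\ell^\ell, z), x)$.  A standard argument using the associativity of vertex operators (a consequence of the Jacobi identity for $Y_A$) then propagates the identity from generators to all of $\vtk$.  I would next take $b = v_1 \otimes \cdots \otimes v_k$ and compare both sides.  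On the $\vtk$-side, iterating the tensor-product formula (\ref{define-tensor-product}) gives
\[ Y(u^i, x)(v_1 \otimes \cdots \otimes v_k) = (-1)^{|u|(|v_1| + \cdots + |v_{i-1}|)} v_1 \otimes \cdots \otimes Y(u, x)v_i \otimes \cdots \otimes v_k, \]
since $u$ acts nontrivially only on the $i$-th slot.  On the $A$-side, applying Lemma \ref{l3.5} iteratively allows $Y_A(Y_g(u^i, z), x)$ to be supercommuted past each $Y_g(v_\ell^\ell, z)_{-1}$ with $\ell < i$, producing the same cumulative sign $(-1)^{|u|(|v_1| + \cdots + |v_{i-1}|)}$.

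The proof then reduces to the single-slot identity
\[ Y_A(Y_g(u^i, z), x) Y_g(v^i, z) = Y_g((Y(u, x)v)^i, z), \]
which asserts that $u \mapsto Y_g(u^i, z)$ is a vertex superalgebra embedding $V \hookrightarrow A$; this is the main obstacle.  To establish it, I would combine the supercommutator formula of Lemma \ref{l3.3} (with $j = m = i$) with the $L(-1)$-derivative property of Lemma \ref{l3.1}, and use that $(A, Y_A)$ is the local system of \cite{Li2} in which $Y_A$ is determined by its action on the mutually local generating operators.  A crucial input is that $k$ is odd: this ensures $Y_g(u^i, z) \in (\End \, M)[[z^{1/k}, z^{-1/k}]]$ without half-integer powers (cf.\ Remark \ref{wrong-space-remark}), and it collapses the parity-dependent factor in (\ref{first-supercommutator}) to the clean form (\ref{second-supercommutator}) matching the Jacobi identity of $V$.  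The sign bookkeeping throughout is delicate but systematic: each supercommutation via Lemma \ref{l3.5} contributes $(-1)^{|u||v_\ell|}$ exactly matching the factor that appears in (\ref{define-tensor-product}).
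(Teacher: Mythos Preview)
Your reduction to the single-slot identity
\[
Y_A(Y_g(u^i,z),x)\,Y_g(v^i,z)=Y_g\bigl((Y(u,x)v)^i,z\bigr)
\]
is exactly the reduction the paper carries out (via Lemmas \ref{l3.5} and \ref{l3.6}), so the overall architecture is right.

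The gap is in how you propose to prove that single-slot identity.  Citing the supercommutator (Lemma \ref{l3.3}) and the $L(-1)$-derivative (Lemma \ref{l3.1}) determines only the \emph{singular} part of the $A$-product: the commutator pins down $\phi(u)_n^A\phi(v)$ for $n\ge 0$, and translation covariance does not by itself recover the $n<0$ modes.  Li's $Y_A$ in (\ref{3.4}) is built from the full two-variable expression $X$, not from $\Res_{x_0}X$, so ``$Y_A$ is determined by locality'' does not close the argument without further input.  What the paper actually does is compute $Y_A(Y_g(u^1,z),x_0)Y_g(v^1,z)$ directly from the definition (\ref{3.4}): it uses mutual locality (\ref{a3.3}) and the three-term $\delta$-identity (\ref{three-term-delta}) to collapse $X$, applies (\ref{delta-function2}) to handle the $p$-sum, and then matches the answer with $Y_g(Y(u^1,x_0)v^1,z)$ via Proposition \ref{psun1}, which gives
\[
\Delta_k(x_2)\,Y(u,x_0)\,\Delta_k(x_2)^{-1}=Y\bigl(\Delta_k(x_2+x_0)u,\,(x_2+x_0)^{1/k}-x_2^{1/k}\bigr).
\]
This conjugation formula is the essential analytic ingredient; it does not follow from Lemmas \ref{l3.1} and \ref{l3.3} and you have not invoked it.  Without it (or an explicit Goddard-uniqueness argument in $A$, which you would have to set up carefully for the twisted local system), the single-slot identity is asserted rather than proved.
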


\begin{proof} We need to show that
\[fY(u_1\otimes\cdots \otimes u_k,x)=Y_A( Y_g(u_1^1,z)_{-1}\cdots 
Y_g(u_{k-1}^{k-1},z)_{-1}Y_g(u_k^k,z),x)f\]
for $u_i\in V.$ 
Take $v_i\in V$ for $i=1,...,k.$ Then 
\begin{eqnarray*}
\lefteqn{fY(u_1\otimes\cdots \otimes u_k,x)(v_1\otimes \cdots\otimes v_k) }\\
&=& \! \! (-1)^s f(Y(u_1,x)v_1\otimes\cdots Y(u_k,x)v_k)\\
&=& \! \! (-1)^s Y_g(Y(u_1^1,x)v_1^1,z)_{-1}\cdots Y_g(Y(u_{k-1}^{k-1},x)v_{k-1}^{k-1},z)_{-1}
Y_g(Y(u_k^k,x)v_k^k,z) 
\end{eqnarray*}
for $s = \sum_{j=1}^{k-1} |v_j| \sum_{i = j + 1}^k |u_i|$.

By Lemma \ref{l3.6}, we have
\begin{multline*} 
Y_A( Y_g(u_1^1,z)_{-1}\cdots Y_g(u_{k-1}^{k-1},z)_{-1}Y_g(u_k^k,z),x)
f(v^1\otimes \cdots \otimes v^k) \\
= Y_A(Y_g(u^1_1,z),x)\cdots Y_A(Y_g(u_{k-1}^{k-1},z),x)Y_A(Y_g(u_k^k,z),x)
Y_g(v_1^1,z)_{-1}\\
\cdots Y_g(v_{k-1}^{k-1},z)_{-1}Y_g(v_k^k,z).
\end{multline*}
By Lemma \ref{l3.5}, it is enough to show that 
$$ Y_g(Y(u^j,x)v^j,z)=Y_A(Y_g(u^,z),x)Y_g(v^j,z)$$
for $u,v\in V$ and $j=1,...,k.$ In fact, in view of the
relation between $Y(u^1,z)$ and $Y(u^j,z)$ for 
$u\in V,$  we only need to prove the case $j=1.$

By Proposition \ref{psun1},
\begin{eqnarray*}
Y_g(Y(u^1,x_0)v^1,x_2) &=& Y_M(\Delta_k(x_2)Y(u,x_0)v,x_2^{1/k})\\
&=& Y_M(Y(\Delta_k(x_2+x_0)u,(x_2+x_0)^{1/k}-x_2^{1/k})\Delta_k(x_2)v,x_2^{1/k}).
\end{eqnarray*}
On the other hand,
\[Y_A(Y_g(u^1,x_2),x_0)Y_g(v^1,x_2)=\sum_{p=0}^{k-1}\Res_{x_1}
\left(\frac{x_1-x_{0}}{x_2}\right)^{p/k}X\]
where
\begin{multline}
X=x_{0}^{-1}\delta\left(\frac{x_1-x_2}{x_{0}}\right)Y_g(u^1,x_1)
Y_g(v^1,x_2)\\
-(-1)^{|u||v|}x_{0}^{-1}\delta\left(\frac{x_2-x_1}{-x_{0}}\right)
Y_g(v^1,x_2)Y_g(u^1,x_1).
\end{multline}

By equation (\ref{a3.3}), there exists a positive integer $N$ such that
\[(x_1-x_2)^NY_g(u^1,x_1)Y_g(v^1,x_2)= (-1)^{|u||v|} (x_1-x_2)^NY_g(v^1,x_2)Y_g(u^1,x_1).\]
Thus, using the three-term $\delta$-function identity (\ref{three-term-delta}), we have 
\begin{eqnarray*}
X &=& x_{0}^{-1}\delta\left(\frac{x_1-x_2}{x_{0}}\right)Y_g(u^1,x_1)
Y_g(v^1,x_2)\\
& &\quad - (-1)^{|u||v|} x_{0}^{-1} \delta\left(\frac{x_2-x_1}{-x_{0}}\right)
x_0^{-N}(x_1-x_2)^NY_g(v^1,x_2)Y_g(u^1,x_1)\\
&=& x_{0}^{-1}\delta\left(\frac{x_1-x_2}{x_{0}}\right)x_0^{-N}
\left((x_1-x_2)^NY_g(u^1,x_1)Y_g(v^1,x_2)\right)\\
& & \quad - (-1)^{|u||v|} x_{0}^{-1} \delta\left(\frac{x_2-x_1}{-x_{0}}\right)
x_0^{-N}\left((x_1-x_2)^NY_g(u^1,x_1)Y_g(v^1,x_2)\right)\\
&=& x_2^{-1}x_0^{-N}\delta\left(\frac{x_1-x_0}{x_2}\right)
\left((x_1-x_2)^NY_g(u^1,x_1)Y_g(v^1,x_2)\right) .
\end{eqnarray*}

Therefore using the $\delta$-function relation (\ref{delta-function2}), 
we have
\begin{multline*}
Y_A(Y_g(u^1,x_2),x_0)Y_g(v^1,x_2) \\
= \Res_{x_1} x_0^{-N} x_2^{-1} \delta\Biggl(\frac{(x_1-x_0)^{1/k}}{x_2^{1/k}}\Biggr) \left((x_1-x_2)^NY_g(u^1,x_1)Y_g(v^1,x_2)\right).
\end{multline*}

And the rest of the proof is analogous to the corresponding part of the proof of Lemma 3.8 in \cite{BDM}. \end{proof}

Let $(M,Y)$ be a weak $V$-module, $k$ a positive odd integer, and $g = (1 \; 2 \; \cdots \; k)$.  Define $T_g^k(M,Y) = (T_g^k(M),
Y_g) = (M, Y_g)$.  That is $T_g^k(M, Y)$ is $M$ as the underlying
vector space and the vertex operator $Y_g$ is given by (\ref{define-g-twist}).

Now we state our first main theorem of the paper.
\begin{thm}\label{main1} 
$(T_g^k(M),Y_g)$ is a weak $g$-twisted $V^{\otimes k}$-module such
that $T_g^k(M)=M$, and $Y_g$, defined by (\ref{define-g-twist}), is the
linear map {}from $V^{\otimes k}$ to $(\End \; T_g^k(M))[[x^{1/k},\\
x^{-1/k}]]$
defining the twisted module structure. Moreover, 

(1) $(M, Y)$ is an irreducible weak $V$-module if and only if
$(T_g^k(M), Y_g)$ is an irreducible weak $g$-twisted $V^{\otimes
k}$-module.

(2) $M$ is a weak admissible $V$-module if and only if $T_g^k(M)$ is a weak 
 admissible $g$-twisted $V^{\otimes k}$-module.

(3) $M$ is an ordinary $V$-module if and only if $T_g^k(M)$ is an
 ordinary $g$-twisted $V^{\otimes k}$-module.
\end{thm}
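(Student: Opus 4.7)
The plan is to exploit the vertex superalgebra homomorphism $f : V^{\otimes k} \to A$ of Lemma~\ref{l3.7} together with the natural weak $\rho$-twisted $A$-module structure that $M$ carries by Li's framework \cite{Li2}, where $\rho$ acts on the local system $A$ by $\rho \cdot a(x) = \lim_{x^{1/k}\to \eta^{-1}x^{1/k}} a(x)$. The crucial compatibility to check is $f \circ g = \rho \circ f$, which holds on generators because $\rho(f(u^j)) = \rho Y_g(u^j,x) = Y_g(u^{j-1},x) = f(u^{j-1}) = f(g u^j)$, and extends to all of $V^{\otimes k}$ by the multiplicative property of $f$ provided by Lemma~\ref{l3.6}. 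Pulling back the $\rho$-twisted $A$-module structure on $M$ via $f$ then yields the desired weak $g$-twisted $V^{\otimes k}$-module structure with vertex operator $Y_g$ as in \eqref{define-g-twist}. The vacuum axiom $Y_g({\bf 1}, x) = \mathrm{Id}_M$ reduces to $\D(x){\bf 1} = {\bf 1}$, which holds since $L(j){\bf 1} = 0$ for $j \geq -1$; the truncation condition is inherited from $Y_M$; and the parity-stability axiom holds because both $\D(x)$ and $Y_M$ respect the $\Z_2$-grading.

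For part (1), I would show directly that the lattices of weak $V$-submodules of $(M, Y_M)$ and of weak $g$-twisted $V^{\otimes k}$-submodules of $T_g^k(M)$ coincide. Since $V^{\otimes k}$ is generated as a vertex superalgebra by the $u^j$ with $u \in V$ and $j = 1, \ldots, k$, and $Y_g(u^{j+1}, x)$ is determined by $Y_g(u^1, x)$ via $x^{1/k} \to \eta^j x^{1/k}$, a subspace is a $g$-twisted $V^{\otimes k}$-submodule iff it is preserved by all components of $Y_g(u^1, x) = Y_M(\D(x)u, x^{1/k})$ for $u \in V$. Since $\D(x)$ is invertible on $V$-valued formal series, these components generate the same subalgebra of $\End\,M$ as the components of $Y_M(u, x^{1/k})$, which in turn coincides with the subalgebra generated by the components of $Y_M(u, x)$. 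Hence a subspace is a $g$-twisted submodule iff it is a $V$-submodule, and irreducibility is preserved in both directions.

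For (2), if $M = \coprod_{n \in \frac{1}{2}\N} M(n)$ is weak admissible, I would set $T_g^k(M)(s) = M(ks)$ for $s \in \frac{1}{2k}\N$ and verify the grading shift $v^g_m T_g^k(M)(s) \subseteq T_g^k(M)(s + \wt v - m - 1)$ by direct computation using that, for homogeneous $u \in V_w$,
\[
\D(x) u = k^{-w} x^{(1-k)w/k} \sum_{l \geq 0} x^{-l/k} u_l, \qquad u_l \in V_{w - l},
\]
so that the components of $Y_M(\D(x)u, x^{1/k})$ shift weight by $k(w - m - 1)$ in the original grading, i.e. by $w - m - 1$ in the rescaled $T_g^k(M)$-grading. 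For (3), applying \eqref{sun1} to the total conformal element $\o^1 + \cdots + \o^k$ of $V^{\otimes k}$ and extracting the coefficient of $x^{-2}$ from $\sum_{j=1}^k Y_g(\o^j, x)$ gives
\[
L^g(0) = \frac{1}{k} L(0) + \frac{(k^2-1) c}{24 k}
\]
on $T_g^k(M)$, from which the equivalence of ordinary module structures follows by a linear shift and rescaling of $L(0)$-eigenspaces; finite-dimensionality and lower-boundedness of generalized eigenspaces transfer automatically. The converses in (1), (2) and (3) follow by inverting these identifications.

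The main obstacle is the careful tracking of super-signs and the identification of where oddness of $k$ is essential. By Lemma~\ref{l3.2}, the supercommutator $[\bY(u, x_1), \bY(v, x_2)]$ reduces to the clean form \eqref{k2} needed to apply Li's local system framework precisely when $k$ is odd; for $k$ even, the residual factor $((x_1 - x_0)/x_2)^{|u|/2k}$ in \eqref{first-supercommutator} obstructs the standard twisted Jacobi identity (cf.\ Remark~\ref{generalized-remark}). Thus the construction of the local system $A$ as a vertex superalgebra, and hence the very existence of $f$ as a vertex superalgebra homomorphism on which the whole argument rests, crucially depends on $k$ being odd.
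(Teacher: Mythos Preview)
Your proposal is correct and follows essentially the same route as the paper: both arguments deduce the weak $g$-twisted module structure directly from Lemma~\ref{l3.7} via Li's local-system machinery, establish (1) by noting that invertibility of $\Delta_k(x)$ forces the components of $Y_g(u^1,x)$ and of $Y_M(u,x)$ to generate the same subalgebra of $\End\,M$, prove (2) by rescaling the grading $T_g^k(M)(n/k)=M(n)$ and reading off the weight shift from the expansion of $\Delta_k(x)u$, and prove (3) from the identity $L^g(0)=\tfrac{1}{k}L(0)+\tfrac{(k^2-1)c}{24k}$. Your explicit verification of $f\circ g=\rho\circ f$ and of the vacuum, truncation, and parity axioms is slightly more detailed than the paper's terse ``immediate from Lemma~\ref{l3.7}''; conversely, the paper spells out the converse direction of (2) using $\Delta_k(x)^{-1}$ where you only gesture at ``inverting these identifications,'' but the idea is the same.
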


\begin{proof} It is immediate {}from Lemma \ref{l3.7} that $T_g^k(M)=M$ is a weak $g$-twisted $V^{\otimes k}$-module with $Y_g(u^1,x)=\bar Y(u,x).$ Note
that with 
\begin{equation}\label{Delta-inverse}
\Delta_k (x)^{-1} = ( x^{1/2k})^{-( 1 - k) 2L(0)}  (k^{1/2})^{2L(0)} \exp 
\Biggl( -\sum_{j \in \ZZ} a_j x^{- j/k} L(j) \Biggr),
\end{equation}
we have 
\begin{equation}\label{for-grading}
Y_g((\Delta_k(x^k)^{-1}u)^1,x)=\bar Y(\Delta_k(x)^{-1}u,x)=Y_M(u,x^{1/k}),
\end{equation}
and all twisted vertex operators $Y_g(v,x)$ for $v\in \vtk$ can
be generated {}from $Y_g(u^1,x)$ for $u\in V.$  It is clear now that $M$
is an irreducible weak $V$-module if and only if $T_g^k(M)$ is an
irreducible weak $g$-twisted $V^{\otimes k}$-module, proving statement (1).

For statement (2), we first assume that $M$ is a weak admissible $V$-module, i.e. we have $M =
\coprod_{n\in \frac{1}{2}\mathbb{N}} M(n)$ such that for $m \in 
\mathbb{Z}$, the component operator $u_m$ of $Y_M(u, z)$, satisfies $u_mM(n)\subset
M(\wt \; u-m-1+n)$ if $u\in V$ is of homogeneous weight. Define a
$\frac{1}{2k}\N$-gradation on $T_g^k(M)$ such that $T_g^k(M)(n/k) =
M(n)$ for $n\in \frac{1}{2}\Z.$  Recall that $Y_g(v,x) = \sum_{m \in \frac{1}{k}
\mathbb{Z}} v^g_m x^{-m-1}$ for $v\in \vtk$.  We need to show that
$v^g_mT_g^k(M)(n)\subset T_g^k(M)(\wt \; v-m-1+n)$ for $m\in\frac{1}{k} \Z$, and $n \in \frac{1}{2k} \N$.  Since all twisted vertex operators $Y_g(v,x)$ for
$v\in \vtk$ can be generated {}from $Y_g(u^1,x)$ for $u\in V$, it is
enough to show $(u^1)^g_m T_g^k(M)(n) \subset T_g^k(M)(\wt \; u-m-1+n)$.

Let $u\in V_p$ for $p\in\frac{1}{2} \Z$.  Then 
\[\Delta_k(x)u=\sum_{j=0}^{\infty}u(j)x^{p/k - p - j/k}\]
where $u(j)\in V_{p-j}.$ Thus
\[Y_g(u^1,x)=Y_M(\Delta_k(x)u,x^{1/k})=\sum_{j=0}^{\infty}Y_M(u(j),x^{1/k})
x^{p/k - p - j/k} ,\]
and thus for $m \in \frac{1}{k} \mathbb{Z}$
\[(u^1)^g_m=\sum_{j=0}^{\infty}u(j)_{(1-k)p-j-1+km+k}.\]
Since the weight of $u(j)_{(1-k)p-j-1+km+k}$ is $k(p-m-1)$, we see
that for $n \in \frac{1}{2k} \N$, that $(u^1)^g_mT_g^k(M)(n)=(u^1)^g_mM(kn) \subset M(k(p-m-1+n)) =T_g^k(M)(p-m-1+n)$, showing that $T_g^k(M)$
is a weak admissible $g$-twisted $V^{\otimes k}$-module.

Conversely, we assume that $T_g^k(M)$ is a weak admissible $g$-twisted $V^{\otimes k}$-module, i.e. we have $T_g^k(M) = \coprod_{n\in \frac{1}{2k}\mathbb{N}} T_g^k(M)(n)$ such that for $m \in \frac{1}{k} \mathbb{Z}$, the component operator $u^g_m$ of $Y_g(u, x)$ satisfies $u^g_m T_g^k(M)(n)\subset T_g^k(M)(\wt \; u-m-1+n)$ if $u\in V^{\otimes k}$ is of homogeneous weight. Define a $\frac{1}{2}\N$-gradation on $M$ such that $M(n) = T_g^k(M)(n/k)$ for $n\in \frac{1}{2}\Z.$ 

Note that by again letting $u\in V_p$ for $p\in\frac{1}{2} \Z$, then 
\[\Delta_k(x)^{-1} u=\sum_{j=0}^{\infty}u[j]x^{ p -p/k - j}\]
where $u[j]\in V_{p-j}.$  Thus Equation (\ref{for-grading}) implies 
\[Y_M(u, x) = Y_g((\Delta_k(x^k)^{-1}u)^1,x^k) = \sum_{j=0}^\infty Y_g(u[j]^1, x^k) x^{pk - p -jk} \]
and thus for $m \in \mathbb{Z}$
\[u_m = \sum_{j=0}^\infty (u[j]^1)^g_{\frac{1}{k}((k-1)p -jk -k + m + 1) }.\]
The weight of $(u[j]^1)^g_{\frac{1}{k}((k-1)p -jk -k + m + 1) }$ is $\frac{1}{k}(p-m-1)$.  Therefore for the weak $V$-module $M$, we have $u_m M(n) = u_m T_g^k(M) (n/k) \subset T_g^k(M) ( \frac{1}{k} (p - m - 1 + n)) = M(p-m-1 + n)$, finishing the proof of (2).

In order to prove (3) we write $Y_g(\bar\omega,x) = \sum_{n\in\Z}
L^g(n)x^{-n-2}$ where $\bar\omega=\sum_{j=1}^k\omega^j$.  We have
\[Y_g(\bar\omega,x)=\sum_{j=0}^{k-1}\lim_{x^{1/k}\mapsto \eta^{-j}x^{1/k}}
  Y_g(\omega^1,x).\]
It follows {}from (\ref{sun1}) that $L^g(0)=\frac{1}{k}L(0)+\frac{(k^2-1)c}{24k}$, immediately implying (3).
\end{proof}

Let $V$ be an arbitrary vertex operator superalgebra and $g$ an
automorphism of $V$ of finite order. We denote the categories of weak,
weak admissible and ordinary generalized $g$-twisted $V$-modules by $\mathcal{ C}^g_w(V),$
$\mathcal{ C}^g_a(V)$ and $\mathcal{ C}^g(V)$, respectively.  If $g=1$, we
habitually remove the index $g.$

Now again consider the vertex operator superalgebra $V^{\otimes k}$ and the
$k$-cycle $g = (1 \; 2 \; \cdots \; k)$ for $k$ odd.  Define
\begin{eqnarray*}
T_g^k: \mathcal{ C}_w(V) &\longrightarrow& \mathcal{ C}^g_w(V^{\otimes k})\\
  (M,Y) &\mapsto& (T_g^k(M),Y_g) = (M,Y_g)\\
     f  &\mapsto& T_g^k(f) = f
\end{eqnarray*}
for $(M,Y_M)$ an object and $f$ a morphism in $\mathcal{ C}_w(V)$.  

The following corollary to Theorem \ref{main1} follows immediately.

\begin{cor}\label{c3.10} 
If $k$ is odd, then $T_g^k$ is a functor {}from the category $\mathcal{ C}_w(V)$ to the category
$\mathcal{ C}^g_w(\vtk)$ such that: (1) $T_g^k$ preserves irreducible
objects; (2) The restrictions of $T_g^k$ to $\mathcal{ C}_a(V)$ and $\mathcal{
C}(V)$ are functors {}from $\mathcal{ C}_a(V)$ and $\mathcal{ C}(V)$ to $\mathcal{
C}^g_a(\vtk)$ and $\mathcal{ C}^g(\vtk)$, respectively.
\end{cor}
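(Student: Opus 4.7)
The plan is to observe that Theorem \ref{main1} already supplies essentially all the content on objects; what remains for the corollary is to check that $T_g^k$ is well defined on morphisms, that it is functorial, and that the stated restrictions to the subcategories are compatible. First I would fix a morphism $f:(M,Y_M)\to (M',Y_{M'})$ in $\mathcal{C}_w(V)$, so $f\,Y_M(v,y) = Y_{M'}(v,y)\,f$ for all $v\in V$, and set $T_g^k(f)=f$ on the underlying vector space. Functoriality (identities, composition) is then immediate, so the only substantive point is verifying that $f$ intertwines the twisted actions.

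For the intertwining condition, I would argue directly from the definition $Y_g(u^1,x)=\bY(u,x)=Y_M(\Delta_k(x)u,x^{1/k})$: since $f$ commutes with $Y_M(w,y)$ for every $w\in V$, including $w=\Delta_k(x)u$ (whose coefficients lie in $V$), and since $\Delta_k(x)\in(\End V)[[x^{1/2k},x^{-1/2k}]]$ is a scalar operator in the sense of not involving the module action, we get $f\,Y_g(u^1,x)=Y_g'(u^1,x)\,f$ for all $u\in V$. Taking the limits $x^{1/k}\to \eta^{j}x^{1/k}$ that define $Y_g(u^{j+1},x)$ in (\ref{define-g-twist}) preserves this intertwining. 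Finally, since the operators $Y_g(u^j,x)$ for $u\in V$ and $j=1,\dots,k$ generate all the twisted vertex operators $Y_g(v,x)$ for $v\in V^{\otimes k}$ through the vertex-superalgebra product in the local system $A$ of Lemmas \ref{l3.5}--\ref{l3.7}, and $f$ clearly commutes with taking such products of mutually local operators on $M$, we conclude $f\,Y_g(v,x)=Y_g'(v,x)\,f$ for every $v\in V^{\otimes k}$, so $T_g^k(f)$ is a morphism in $\mathcal{C}^g_w(V^{\otimes k})$.

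Part (1) is then just a restatement of Theorem \ref{main1}(1). For part (2), on objects the admissibility and ordinary cases are Theorem \ref{main1}(2) and (3). To see that the restricted functors land in the right morphism class, I would use the explicit grading identifications produced in the proof of Theorem \ref{main1}. For weak admissible modules the grading is $T_g^k(M)(n/k)=M(n)$ for $n\in\tfrac{1}{2}\N$, so a grade-preserving $f:M\to M'$ automatically satisfies $f(T_g^k(M)(n/k))\subseteq T_g^k(M')(n/k)$. For ordinary modules, the identity $L^g(0)=\tfrac{1}{k}L(0)+\tfrac{(k^2-1)c}{24k}$ derived at the end of the proof of Theorem \ref{main1} shows that the $L(0)$-eigenspace decomposition of $M$ is carried to the $L^g(0)$-eigenspace decomposition of $T_g^k(M)$ by the bijection $\lambda\mapsto \lambda/k+(k^2-1)c/(24k)$; since $f$ preserves $L(0)$-eigenspaces it therefore preserves $L^g(0)$-eigenspaces. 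There is no real obstacle here: the only point requiring care is confirming that the generation of all $Y_g(v,x)$ from the $Y_g(u^j,x)$ used above is literally the content of Lemmas \ref{l3.6}--\ref{l3.7}, and once that is invoked the corollary is a direct unpacking of Theorem \ref{main1}.
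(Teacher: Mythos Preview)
Your proposal is correct and follows the same route as the paper, which simply declares the corollary to follow immediately from Theorem \ref{main1} and the definition $T_g^k(f)=f$; you have supplied the details the paper omits, namely that $f$ intertwines the generators $Y_g(u^j,x)$ and hence, via the local-system construction of Lemmas \ref{l3.5}--\ref{l3.7}, all of $Y_g$, together with the grading compatibility coming from the explicit identifications in the proof of Theorem \ref{main1}.
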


In the next section we will construct a functor $U_g^k$, in the case when $k$ is odd, {}from the
category $\mathcal{ C}^g_w(\vtk)$ to the category $\mathcal{ C}_w(V)$ such
that $U_g^k \circ T_g^k = id_{\mathcal{ C}_w(V)}$ and $T_g^k \circ U_g^k = 
id_{\mathcal{ C}^g_w(\vtk)}$.

\section{Constructing a weak $V$-module structure on a 
weak $g = (1 \; 2 \; \cdots \; k)$-twisted $V^{\otimes k}$-module for $k$ odd}\label{classification-section}
\setcounter{equation}{0}

For $k \in \ZZ$ and $g = (1\; 2\; \cdots \; k)$, let $M=(M,Y_g)$ be a 
weak $g$-twisted $\vtk$-module.  Motivated by the construction 
of weak $g$-twisted $\vtk$-modules {}from weak $V$-modules in 
Section 5, we consider 
\begin{equation}\label{define U}
Y_g((\Delta_k(x^k)^{-1}u)^1,x^k)
\end{equation}
for $u\in V$ where $\Delta_k (x)^{-1} = \Delta_k^V(x)^{-1}$ is given by 
(\ref{Delta-inverse}).
Note that (\ref{define U}) is multivalued since 
$Y_g((\Delta_k(x)^{-1}u)^1,x) \in (\mathrm{End} \, M) [[x^{1/2k}, x^{-1/2k}]]$.   
Thus we define 
\begin{equation}
Y_M(u,x) = Y_g((\Delta_k(x^k)^{-1}u)^1,x^k)
\end{equation} 
to be the unique formal Laurent series in $(\mathrm{End} \,  M) [[x^{1/2}, x^{-1/2}]]$ given by  
taking $(x^k)^{1/k} = x$.   Note that if $k$ is odd, then $Y_M(u,x) \in (\mathrm{End} \, M) [[x, x^{-1}]]$.

Our goal in this section is for the case when $k$ is odd, to construct a functor $U_g^k : \mathcal{ C}_w^g(\vtk) \rightarrow \mathcal{ C}_w(V)$ with $U_g^k(M_g,Y_g) = (U_g^k(M_g),Y_M) = (M_g,Y_M)$.  If we instead define $Y_M$ by taking $(x^k)^{1/k} = \eta^jx$ for $\eta$ a fixed primitive $k$-th root of unity for $j=1,\dots,k-1$, then $(M_g,Y_M)$ will not be a weak $V$-module.  Further note that this implies that if we allow $x=z$ to be complex number and if we define $z^{1/k}$ using the principal branch of the logarithm, then much of our work in this section is valid if and only if $-\pi/k < \mathrm{arg} \; z < \pi/k$.

\begin{lem}\label{l4.1} For $u\in V,$ we have
\begin{eqnarray*}
Y_M(L(-1)u,x) &=& \left(\frac{d}{dx}((x^k)^{1/k})\right)\frac{d}{dx}Y_M(u,x)\\
&=& \frac{d}{dx} Y_M(u,x)
\end{eqnarray*}
on $U_g^k(M_g) = M_g$.  Thus the $L(-1)$-derivative property holds for $Y_M$ on $M_g$.
\end{lem}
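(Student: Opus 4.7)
The plan is to reduce the identity to two ingredients that the excerpt already makes available: the conjugation formula for $\Delta_k(z)^{-1}$ against $L(-1)$ from Corollary \ref{c2.5}, and the $L(-1)$-derivative axiom that holds for the weak $g$-twisted $V^{\otimes k}$-module $(M, Y_g)$.

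First I would specialize the second identity of Corollary \ref{c2.5} to $z = x^k$. Under the convention $(x^k)^{1/k} = x$, one has $k z^{-1/k + 1} = k x^{k-1}$, and since $k x^{k-1}\,\partial_{x^k}$ agrees with $\partial_x$ on formal series in $x^k$ by the chain rule, the identity rearranges to
\[\Delta_k(x^k)^{-1} L(-1) u \; = \; k x^{k-1}\,L(-1)\,\Delta_k(x^k)^{-1} u \; + \; \frac{\partial}{\partial x}\bigl(\Delta_k(x^k)^{-1} u\bigr).\]

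Next I would insert each side into the first tensor factor (with $\mathbf{1}$ elsewhere) and apply $Y_g(\,\cdot\,, x^k)$. The left-hand side is $Y_M(L(-1) u, x)$ by definition. For the first term on the right, note that since $L(-1)\mathbf{1} = 0$, we have $(L(-1) v)^1 = L(-1)^{V^{\otimes k}} v^1$ for all $v \in V$, and the $L(-1)$-derivative property on the $g$-twisted $V^{\otimes k}$-module then gives, coefficient by coefficient in the expansion of $\Delta_k(x^k)^{-1} u$ as a formal series in $x$,
\[Y_g\bigl((L(-1)\,\Delta_k(x^k)^{-1} u)^1,\, y\bigr) \; = \; \frac{d}{dy}\,Y_g\bigl((\Delta_k(x^k)^{-1} u)^1,\, y\bigr).\]
Setting $y = x^k$ and converting $\tfrac{d}{dy}\bigl|_{y = x^k}$ to $\tfrac{1}{k x^{k-1}}\tfrac{\partial}{\partial \tilde x}\bigl|_{\tilde x = x}$ via the chain rule, the first term on the right becomes the partial of $Y_g((\Delta_k(x^k)^{-1} u)^1, \tilde x^k)$ in $\tilde x$ at $\tilde x = x$, with the $x$ inside $\Delta_k$ held fixed. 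The second term on the right, $Y_g((\partial_x\,\Delta_k(x^k)^{-1} u)^1, x^k)$, records the remaining dependence of $Y_M(u,x)$ on the external $x$ through $\Delta_k$ only. By the product rule these two pieces sum to the total derivative $\frac{d}{dx} Y_M(u, x)$, which gives the claim; the factor $\frac{d}{dx}(x^k)^{1/k} = 1$ appearing in the statement is the trivial chain-rule coefficient that bookkeeps this reduction.

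I do not expect a substantive obstacle here — the derivation is formal bookkeeping combining the two identities above. The one point that requires care is that $Y_M(u, x)$ genuinely lies in $(\End\, M)[[x, x^{-1}]]$ rather than in half-integral powers of $x$; this is exactly where the hypothesis that $k$ is odd enters, as already recorded in the set-up preceding the lemma, but it is orthogonal to the derivation of the $L(-1)$-derivative property itself.
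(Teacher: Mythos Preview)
Your proposal is correct and follows essentially the same approach as the paper: the paper's own proof simply says the argument is analogous to Lemma~\ref{l3.1}, using the second identity of Corollary~\ref{c2.5} in place of the first, and your write-up carries out exactly that computation. Your explicit observation that $(L(-1)v)^1 = L(-1)^{V^{\otimes k}} v^1$ because $L(-1)\mathbf{1} = 0$ is the one detail the paper leaves implicit, and you handle it correctly.
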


\begin{proof} The proof is similar to that of Lemma \ref{l3.1}, and follows from Corollary \ref{c2.5} via the analogous proof of Lemma 4.1 in \cite{BDM}. \end{proof}

\begin{lem}\label{l4.2} Let $u,v\in V$.  Then on $U_g^k(M_g) = M_g$, we have the supercommutator
\begin{multline}
[Y_M (u,x_1),Y_M (v,x_2)] \\
=
\Res_{x_0} x_2^{-1}\delta\left(\frac{x_1-x_0}{x_2}\right) \left(\frac{x_1-x_0}{x_2}\right)^{\frac{1}{2}(1-k)|u|} 
Y_M (Y(u, x_0)v,x_2),
\end{multline}
i.e.,
\begin{multline}
[Y_M (u,x_1), Y_M (v,x_2) ] \\
= \left\{ \begin{array}{ll}
\Res_{x_0} x_2^{-1}\delta\left(\frac{x_1-x_0}{x_2}\right) \left(\frac{x_1-x_0}{x_2}\right)^{\frac{|u|}{2}} Y_M (Y(u, x_0)v,x_2) & \mbox{if $k$ is even}\\
\Res_{x_0} x_2^{-1}\delta\left(\frac{x_1-x_0}{x_2}\right) Y_M (Y(u, x_0)v,x_2) & \mbox{if $k$ is odd}
\end{array} \right. .
\end{multline}
Therefore, the operators $Y_M$ can satisfy the Jacobi identity only if $k$ is odd. 
\end{lem}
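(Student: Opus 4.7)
The plan is to run the argument of Lemma \ref{l3.2} in reverse. There, the supercommutator of the $V$-module operators $Y_M$ was pushed through $\Delta_k(x_1), \Delta_k(x_2)$ (via Proposition \ref{psun1} and a change of integration variable) to yield the supercommutator of $\bar{Y}(u,x) = Y_M(\Delta_k(x)u, x^{1/k})$; here I will start from the supercommutator of the $g$-twisted operators $Y_g$ on the weak $g$-twisted $V^{\otimes k}$-module $M$ and perform the inverse transformation to produce the supercommutator of $Y_M(u,x) = Y_g((\Delta_k(x^k)^{-1}u)^1, x^k)$.

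First, from the twisted Jacobi identity for $(M, Y_g)$ applied to $u^1, v^1 \in V^{\otimes k}$, I extract (as in the derivation of (\ref{k2})) the supercommutator
\[
[Y_g(u^1, y_1), Y_g(v^1, y_2)] = \Res_{y_0}\, \tfrac{y_2^{-1}}{k}\, \delta\!\left( \tfrac{(y_1-y_0)^{1/k}}{y_2^{1/k}}\right) Y_g\!\left((Y(u, y_0)v)^1, y_2\right).
\]
Replacing $u, v$ by $\Delta_k(y_1)^{-1}u, \Delta_k(y_2)^{-1}v$ and renaming $y_j = x_j^k$, the left-hand side becomes $[Y_M(u, x_1), Y_M(v, x_2)]$ by the definition of $Y_M$.

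Second, I make the change of integration variable $y_0 = x_1^k - (x_1 - x_0)^k$, so that $(y_1 - y_0)^{1/k} = x_1 - x_0$ and $\partial y_0/\partial x_0 = k(x_1 - x_0)^{k-1}$; by the residue change-of-variables identity (\ref{residue change of variables}) together with the $\delta$-function substitution absorbing $(x_1-x_0)^{k-1} x_2^{1-k}$ (valid because $k-1 \in \mathbb{Z}$), the right-hand side becomes
\[
\Res_{x_0}\, x_2^{-1} \delta\!\left(\tfrac{x_1 - x_0}{x_2}\right) Y_g\!\left(\left(Y(\Delta_k(x_1^k)^{-1}u, x_1^k - (x_1-x_0)^k)\, \Delta_k(x_2^k)^{-1}v\right)^1, x_2^k\right).
\]
Then Proposition \ref{psun1}, applied with its $z = (x_1-x_0)^k$ and its $z_0 = x_1^k - (x_1-x_0)^k$ so that $z + z_0 = x_1^k$ and $(z+z_0)^{1/k} - z^{1/k} = x_0$, identifies
\[
Y(\Delta_k(x_1^k)^{-1}u, x_1^k - (x_1-x_0)^k) = \Delta_k((x_1-x_0)^k)^{-1}\, Y(u, x_0)\, \Delta_k((x_1-x_0)^k).
\]

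Third, I compare the resulting expression with the target right-hand side of (\ref{first-supercommutator}). On a homogeneous component $u_n v$ of $Y(u,x_0)v$ (of weight $\wt u + \wt v - n - 1$), the leading $\Delta_k$-prefactor produced by the computation is $(x_1-x_0)^{(k-1)(\wt u - n - 1)}\, x_2^{(k-1)\wt v}$, while the target prefactor is $\left(\tfrac{x_1-x_0}{x_2}\right)^{(1-k)|u|/2}\, x_2^{(k-1)(\wt u + \wt v - n - 1)}$. Their ratio simplifies to $\left(\tfrac{x_1-x_0}{x_2}\right)^{(k-1)[(\wt u - n - 1) + |u|/2]}$; by Remark \ref{parity-grading-on-V} the exponent $(\wt u - n - 1) + |u|/2$ is congruent to $|u| \pmod{\mathbb{Z}}$ and thus an integer, so the ratio collapses to $1$ on the $\delta$-support via the standard identity $y^n \delta(y) = \delta(y)$ for $n \in \mathbb{Z}$. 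An analogous check, using the factorisation $\Delta_k(z) = \tilde{X}(z)\, z^{(1-k)L(0)/k}$ with $\tilde{X}(z) = \exp\!\big(\sum_{j>0} a_j z^{-j/k} L(j)\big) k^{-L(0)}$, confirms that the lower-order corrections from $\tilde{X}$ (which involve only $L(j)$ with $j > 0$ and integer powers of their arguments) cancel cleanly against their counterparts after the $\delta$-substitution $x_1 - x_0 \mapsto x_2$. Assembling the pieces yields (\ref{first-supercommutator}). The case-split form (\ref{second-supercommutator}) then follows, since $(1-k)|u|/2$ is an integer when $k$ is odd and since for $k$ even with $|u|=1$ one has $(1-k)/2 = 1/2 + (\text{integer})$, so the factor reduces modulo integer powers of $(x_1-x_0)/x_2$ to $\left(\tfrac{x_1-x_0}{x_2}\right)^{|u|/2}$.

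The main obstacle is the third step: the bookkeeping of $\Delta_k$ and $\tilde{X}$ contributions that ensures all subleading corrections reduce, on the $\delta$-support, to integer powers of $(x_1-x_0)/x_2$. Once this is verified, the final assertion of the lemma is immediate, since the residual factor $\left(\tfrac{x_1-x_0}{x_2}\right)^{(1-k)|u|/2}$ becomes trivial on the $\delta$-support exactly when $(1-k)|u|/2 \in \mathbb{Z}$, which for $|u|=1$ is precisely the odd-$k$ case.
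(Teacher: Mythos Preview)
Your argument uses the same ingredients as the paper's proof (the twisted supercommutator for $Y_g(u^1,\cdot)$, the residue change of variable $y_0 = x_1^k - (x_1-x_0)^k$, Proposition~\ref{psun1}, and a $\delta$-substitution), but applies the last two in the opposite order. You invoke Proposition~\ref{psun1} with $z=(x_1-x_0)^k$ first, leaving $\Delta_k((x_1-x_0)^k)^{\pm 1}$ factors that must then be collapsed in your third step; the paper instead performs the $\delta$-substitution $x_1\to x_2+x_0$ first---reading off the correction $((x_2+x_0)/x_1)^{(1-k)|u|/2}$ directly from the sole fractional $x_1$-power in $\Delta_k(x_1^k)^{-1}u$---and then applies Proposition~\ref{psun1} with $z=x_2^k$, $z_0=(x_2+x_0)^k-x_2^k$, which reduces at once to $\Delta_k(x_2^k)^{-1}Y(u,x_0)v$ with nothing further to check.

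Your third step is correct in spirit but under-argued: the phrase ``an analogous check \ldots confirms that the lower-order corrections from $\tilde X$ cancel cleanly'' is an assertion, not a proof. A cleaner way to finish, avoiding the component-by-component comparison and the separate $\tilde X$ discussion, is this: observe once and for all that the entire expression $E(w,x_2):=\Delta_k(w^k)^{-1}Y(u,x_0)\Delta_k(w^k)\Delta_k(x_2^k)^{-1}v$ lies in $w^{(k-1)|u|/2}\cdot V((x_0))[w,w^{-1}][[x_2^{\pm 1/2}]]$ (the $\exp(\sum a_j w^{-j}L(j))$ parts contribute only integer $w$-powers, and the net fractional exponent from the two $w^{(1-k)L(0)}$-type factors is $(k-1)\wt u\equiv (k-1)|u|/2\pmod{\Z}$). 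Factor out $w^{(k-1)|u|/2}$, apply the ordinary $\delta$-substitution $w\to x_2$ to the integer-power remainder, and note that $E(x_2,x_2)=\Delta_k(x_2^k)^{-1}Y(u,x_0)v$ since the two $\Delta_k(x_2^k)^{\pm 1}$ cancel. The leftover $(w/x_2)^{(k-1)|u|/2}$ differs from the stated $(w/x_2)^{(1-k)|u|/2}$ by the integer power $(w/x_2)^{(k-1)|u|}$, which collapses on the $\delta$-support.
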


\begin{proof} 
The proof is similar to the proof of Lemma \ref{l3.2} and is analogous to the proof of Lemma 4.2 in \cite{BDM}. 
From the 
twisted Jacobi identity on $(M_g, Y_g)$, we have
\begin{equation}\label{commutator for Lemma 4.2}
 [Y_g(u^1,x_1),Y_g(v^1,x_2)] \; = \; \Res_{x_0}\frac{1}{k}x_2^{-1}
\delta\Biggl(\frac{(x_1-x_0)^{1/k}}{x_2^{1/k}}\Biggr)Y_g(
Y(u^1,x_0)v^1,x_2).
\end{equation}
Therefore, 
\begin{eqnarray*}
\lefteqn{[Y_M(u,x_1),Y_M(v,x_2)]} \\
&=& [Y_g((\D(x^k_1)^{-1}u)^1,x_1^k),Y_g((\D(x_2^k)^{-1}v)^1,x_2^k)] \\
&=& \Res_{x}\frac{1}{k} x_2^{-k} \delta \left( 
\frac{(x_1^k - x)^{1/k}}{x_2} \right)  Y_g( Y( 
(\D(x_1^k)^{-1}u)^1,x) (\D(x_2^k)^{-1}v)^1,x_2^k).
\end{eqnarray*}

We want to make the change of variable $x=x_1^k-(x_1-x_0)^{k}$ where we
choose $x_0$ such that $((x_1 - x_0)^k)^{1/k} = x_1 - x_0$. Then noting
that $(x_1^k - x)^{n/k} |_{x = x_1^k-(x_1-x_0)^{k}} = (x_1 - x_0)^n$ for 
all $n \in \mathbb{Z}$, and using (\ref{residue change of variables}), 
we have
\begin{eqnarray*}
\lefteqn{[Y_M(u,x_1),Y_M(v,x_2)]} \\
&=& \Res_{x_0} x_2^{-k} (x_1-x_0)^{k-1} \delta \left( \frac{
x_1-x_0}{x_2} \right) Y_g( Y((\D(x_1^k)^{-1}u)^1,x_1^k -
(x_1-x_0)^{k})\\
& & \quad (\D(x_2^k)^{-1}v)^1,x_2^k)\\
&=& \Res_{x_0}  x_2^{-1}\delta\left(\frac{x_1-x_0}{x_2}\right) 
Y_g( Y( (\D(x_1^k)^{-1}u)^1, x_1^k - (x_1-x_0)^{k})\\
& & \quad (\D(x_2^k)^{-1}v)^1,x_2^k)\\
&=& \Res_{x_0}  x_1^{-1}\delta\left(\frac{x_2+x_0}{x_1}\right) 
Y_g( Y( (\D(x_1^k)^{-1}u)^1, x_1^k - (x_1-x_0)^{k})\\
& & \quad (\D(x_2^k)^{-1}v)^1,x_2^k)\\
&=& \Res_{x_0} x_1^{-1}\delta\left(\frac{x_2+x_0}{x_1}\right) \left(\frac{x_2+x_0}{x_1}\right)^{\frac{1}{2}(1-k)|u|} 
Y_g((Y(\D((x_2+x_0)^k)^{-1}u, \\
& & \quad (x_2+x_0)^k-x_2^{k})  \D(x_2^k)^{-1}v)^1,x_2^k)\\
&=& \Res_{x_0}  x_2^{-1} \delta \left( \frac{x_1-x_0}{x_2}\right) \left(\frac{x_1-x_0}{x_2}\right)^{\frac{1}{2}(k-1)|u|} 
Y_g((Y(\D((x_2+x_0)^k)^{-1}u, \\
& & \quad (x_2+x_0)^k-x_2^{k}) \D(x_2^k)^{-1}v)^1,x_2^k).
\end{eqnarray*}

Thus the proof is reduced to proving
\begin{eqnarray*} 
Y(\D((x_2+x_0)^k)^{-1}u,(x_2+x_0)^k - x_2^{k})\D(x_2^k)^{-1} = \D(x_2^k)^{-1} Y
\left(u, x_0 \right),
\end{eqnarray*} 
i.e., proving
\begin{equation}\label{3.10}
\D(x_2^k)Y(\D((x_2+x_0)^k)^{-1}u,(x_2+x_0)^k-
x_2^{k})\D(x_2^k)^{-1} =  Y \left(u,  x_0 \right).
\end{equation}
In Proposition \ref{psun1}, substituting $u$, $z$ and $z_0$ by
$ \D((x_2+x_0)^k)^{-1}u,$ $x_2^k$ and $(x_2+x_0)^k
- x_2^{k}$, respectively, gives equation (\ref{3.10}).
\end{proof}

\begin{thm}\label{t4.l} With the notations as above, for $k$ odd,  $U_g^k(M_g,Y_g) = 
(U_g^k(M_g),Y_M) = (M_g,Y_M)$ is a weak $V$-module.
\end{thm}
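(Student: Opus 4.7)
The plan is to verify each of the axioms of a weak $V$-module for $(M_g, Y_M)$.  First I would dispose of the four easier axioms.  The parity-stability condition follows because each $L(j)$ acts as an even operator on $V$, so $\Delta_k(x^k)^{-1}$ preserves the $\mathbb{Z}_2$-grading; hence the coefficients of $Y_M(u,x) = Y_g((\Delta_k(x^k)^{-1}u)^1, x^k)$ inherit the parity stability assumed for $Y_g$.  The vacuum axiom reduces to $Y_M(\mathbf{1},x) = Y_g(\mathbf{1}^1, x^k) = \mathrm{Id}_{M_g}$, since $L(j)\mathbf{1} = 0$ for $j \ge -1$ forces $\Delta_k(x^k)^{-1}\mathbf{1} = \mathbf{1}$.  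The lower-truncation condition $u_n^M w = 0$ for $n \gg 0$ follows by expanding $\Delta_k(x^k)^{-1}u$ as a formal sum of homogeneous vectors in $V$ and applying the truncation property for $Y_g$ to each summand.  The $L(-1)$-derivative property is precisely Lemma \ref{l4.1}.

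The crux is the untwisted Jacobi identity for $Y_M$, and I would establish it by mirroring in reverse the local-system argument of Section \ref{tensor-product-twisted-construction-section}.  By Lemma \ref{l4.2}, when $k$ is odd the supercommutator of $Y_M(u,x_1)$ with $Y_M(v,x_2)$ collapses to the residue of a single $\delta$-function against $Y_M(Y(u,x_0)v, x_2)$; consequently the operators $\{Y_M(u,x) : u \in V\}$ are mutually local on $M_g$ in the sense of \cite{Li2}, and they are $L(-1)$-covariant by Lemma \ref{l4.1}.  Theorem 3.14 of \cite{Li2} then endows the local system $A'$ they generate with a canonical vertex superalgebra structure $Y_{A'}$, and realizes $M_g$ as a weak $A'$-module via $Y_{A'}(\alpha(x), x_1) = \alpha(x_1)$.

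It remains to produce a vertex superalgebra homomorphism $\tilde{f} : V \to A'$ with $\tilde{f}(u) = Y_M(u,x)$, for then pulling back the $A'$-module structure on $M_g$ along $\tilde{f}$ yields the desired weak $V$-module structure.  The construction of $\tilde{f}$ parallels that of $f$ in Lemma \ref{l3.7}: extend the assignment $u \mapsto Y_M(u,x)$ to all of $V$ using the $n$-th products of $A'$, and verify compatibility with $Y$.  After the change of variable $x_1 \mapsto x_1^k$, the required identity $Y_M(Y(u,x_0)v, x) = Y_{A'}(Y_M(u,x), x_0) Y_M(v,x)$ reduces, via the three-term $\delta$-function identity and the $k$-fold decomposition (\ref{delta-function2}), to the conjugation formula established in the proof of Lemma \ref{l4.2}, which is itself a consequence of Proposition \ref{psun1}.

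The main obstacle is the $\delta$-function bookkeeping required in the last paragraph: one must ensure at each step that the substitution $x \mapsto x^k$ and the expansions of $(x_1 - x_0)^{1/k}$ produce well-defined, single-valued elements of $(\mathrm{End}\, M_g)[[x, x^{-1}]]$.  This is precisely where the oddness of $k$ intervenes: for $k$ odd, $(x^k)^{1/k} = x$ unambiguously, so $Y_M(u,x)$ contains only integer powers of $x$; for $k$ even, half-integer powers of $x$ appear (see Remark \ref{wrong-space-remark} and the $k$-even case of Lemma \ref{l4.2}), obstructing both the Jacobi identity for $Y_M$ and the existence of $\tilde{f}$.  Thus the proof simultaneously establishes the theorem and exhibits the structural reason the classification is intrinsic to the odd-order case.
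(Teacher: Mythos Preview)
Your route differs from the paper's. The paper establishes the Jacobi identity for $Y_M$ directly, combining the supercommutator of Lemma~\ref{l4.2} with the weak associativity formula
\[(x_0+x_2)^n Y_M(u,x_0+x_2)Y_M(v,x_2)w=(x_2+x_0)^n Y_M(Y(u,x_0)v,x_2)w,\]
the latter deduced from the twisted associator for $Y_g$ (decomposed over the eigenspace components $u^1_{(j)}$) together with the observation that $\Delta_k(x^k)^{-1}u\in V[x,x^{-1}]$ when $k$ is odd. You instead pass through Li's local-system formalism: locality from Lemma~\ref{l4.2} yields a vertex superalgebra $A'$ acting on $M_g$, and a homomorphism $\tilde f\colon V\to A'$ would pull the module structure back to $V$. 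This is a legitimate alternative, mirroring Section~\ref{tensor-product-twisted-construction-section} in reverse; what it buys is that the Jacobi identity on $M_g$ comes for free from Li's theorem, at the cost of having to verify the homomorphism property.

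There is, however, a gap in your verification of that property. You claim the identity $Y_M(Y(u,x_0)v,x_2)=Y_{A'}(\tilde f(u),x_0)\tilde f(v)$ ``reduces\ldots to the conjugation formula established in the proof of Lemma~\ref{l4.2}'', but that formula (equation~(\ref{3.10}), an instance of Proposition~\ref{psun1}) only handles the $\Delta_k$-bookkeeping. After applying locality and the three-term $\delta$-identity, the right-hand side becomes $\Res_{x_1}\,x_0^{-N}x_2^{-1}\delta\bigl((x_1-x_0)/x_2\bigr)(x_1-x_2)^N Y_M(u,x_1)Y_M(v,x_2)$, and converting this product into an iterate requires precisely the twisted associator for $Y_g$ on $M_g$---the same ingredient the paper invokes. (In the forward direction, Lemma~\ref{l3.7}, the analogous step used the untwisted associativity of the given $V$-module; here the roles of $Y_M$ and $Y_g$ are reversed.) So your route does not avoid the associator computation; it only repackages it. Two smaller points: the phrase ``extend the assignment $u\mapsto Y_M(u,x)$ to all of $V$'' is misplaced, since $\tilde f$ is already globally defined and the task is solely to check it respects products; and Theorem~3.14 of \cite{Li2} treats genuinely twisted local systems, so here you want its $\rho=1$ specialization.
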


\begin{proof} Since the $L(-1)$-derivation property has been proved for $Y_M$ in 
Lemma \ref{l4.1}, we only need to prove the  Jacobi identity which is
equivalent to the supercommutator formula given by Lemma  \ref{l4.2}, since we have  restricted to the case when $k$ is odd, and the associator  formula which states that for $u,v\in V$ and $w
\in U_g^k(M_g)$ there exists  a positive integer $n$ such that
\[(x_0+x_2)^nY_M(u,x_0+x_2)Y_M(v,x_2)w=(x_2+x_0)^nY_M(Y(u,x_0)v,x_2)w .\]

Write $u^1=\sum_{j=0}^{k-1}u^1_{(j)}$ where $gu^1_{(j)} =
\eta^ju^1_{(j)}$.  Then {}from the twisted Jacobi identity, we have the
following associator: there exists a positive integer $m$ such that
for $n\geq m,$ and $j=0,...,k-1$,
\[(x_0+x_2)^{j/k+n}Y_g(u^1_{(j)},x_0+x_2)Y_g(v^1,x_2)w=
(x_2+x_0)^{j/ k+n}Y_g(Y(u^1_{(j)},x_0)v^1,x_2)w.\]

Note that $\Delta_k(x)^{-1} u \in (x^{\frac{1}{2k}})^{(k-1)2 \mathrm{wt} \, u}  V[x^{-1}]$.  
Thus for $k$ odd, we have that $\Delta_k(x^k)^{-1} u \in V[x, x^{-1}]$.  From this fact, the rest of the proof is analogous to the proof of Theorem 4.3 in \cite{BDM} where we use 
Proposition \ref{psun1} above instead of Proposition 2.2 in \cite{BDM}.  
\end{proof}

\begin{thm}\label{t4.ll} 
For $k$ an odd positive integer, and $g = (1 \; 2\; \cdots \; k)$, the map
$U_g^k$ is a functor {}from the category $\mathcal{ C}_w^g(\vtk)$ of
weak $g$-twisted $\vtk$-modules to the category $\mathcal{ C}_w(V)$ of
weak $V$-modules such that $T_g^k \circ U_g^k = 
id_{\mathcal{ C}_w^g(\vtk)}$ and $U_g^k \circ T_g^k = 
id_{\mathcal{ C}_w(V)}$.  In particular, the categories 
$\mathcal{ C}_w^g(\vtk)$ and $\mathcal{ C}_w(V)$ are isomorphic. Moreover,

(1) The restrictions of $T_g^k$ and $U_g^k$ to the category of
admissible  $V$-modules $\mathcal{ C}_a(V)$ and to the category of
admissible $g$-twisted $\vtk$-modules $\mathcal{ C}_a^g(\vtk)$, respectively, give category isomorphisms. In particular, $\vtk$ is $g$-rational if and only if $V$ is rational.

(2) The restrictions of $T_g^k$ and $U_g^k$ to the category of
ordinary $V$-modules $\mathcal{ C}(V)$ and to the category of ordinary
$g$-twisted $\vtk$-modules $\mathcal{ C}^g(\vtk)$, respectively, give
category isomorphisms.
\end{thm}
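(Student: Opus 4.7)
The plan is to combine the results of Sections \ref{tensor-product-twisted-construction-section} and \ref{classification-section} already in hand (namely Theorem \ref{main1}, Corollary \ref{c3.10}, and Theorem \ref{t4.l}) with the explicit formulas for $T_g^k$ and $U_g^k$ so as to verify that these functors are mutually inverse on objects, then upgrade to a full categorical statement and finally pass to the admissible, ordinary, and rationality refinements. First I would check functoriality of $U_g^k$ on morphisms: if $f : (M_g,Y_g) \to (M_g',Y_g')$ is a homomorphism of weak $g$-twisted $\vtk$-modules, then for every $u \in V$, the identity $f \, Y_g((\Delta_k(x^k)^{-1}u)^1,x^k) = Y_g'((\Delta_k(x^k)^{-1}u)^1,x^k) \, f$ holds coefficient-by-coefficient, which is exactly the statement that $f : U_g^k(M_g) \to U_g^k(M_g')$ is a homomorphism of weak $V$-modules.

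The heart of the argument is verifying $T_g^k \circ U_g^k = \mathrm{id}$ and $U_g^k \circ T_g^k = \mathrm{id}$ on objects. For $U_g^k \circ T_g^k$, begin with a weak $V$-module $(M,Y_M)$; then $T_g^k(M,Y_M) = (M,Y_g)$ with $Y_g(u^1,x) = \bY(u,x) = Y_M(\Delta_k(x)u, x^{1/k})$ for $u \in V$, and applying $U_g^k$ produces
\[
\widetilde{Y}_M(u,x) = Y_g((\Delta_k(x^k)^{-1}u)^1, x^k) = Y_M(\Delta_k(x^k)\Delta_k(x^k)^{-1}u, (x^k)^{1/k}) = Y_M(u,x),
\]
where the equality $(x^k)^{1/k}=x$ is exactly the branch choice built into the definition of $U_g^k$ and the fact that $k$ is odd, so $\Delta_k(x^k)^{-1}u \in V[x,x^{-1}]$, makes the substitution unambiguous. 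Conversely, for $T_g^k \circ U_g^k$, begin with a weak $g$-twisted $\vtk$-module $(M_g,Y_g)$; then $U_g^k$ yields $Y_M(u,x) = Y_g((\Delta_k(x^k)^{-1}u)^1,x^k)$, and $T_g^k$ produces the new twisted vertex operator
\[
\widetilde{Y}_g(u^1,x) = Y_M(\Delta_k(x)u, x^{1/k}) = Y_g\bigl((\Delta_k((x^{1/k})^k)^{-1}\Delta_k(x)u)^1, (x^{1/k})^k\bigr) = Y_g(u^1,x),
\]
again using $(x^{1/k})^k = x$. Since the operators $Y_g(u^1,x)$, together with their $\rho$-twisted translates $Y_g(u^j,x)$ obtained by the limit \eqref{define-g-twist}, generate all twisted vertex operators on $M_g$ by the tensor-product generation property highlighted just before Lemma \ref{l3.1}, we conclude $\widetilde{Y}_g = Y_g$ on all of $\vtk$.

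For the refinements, parts (2) and (3) of Theorem \ref{main1} already provide the bijective correspondence of gradings: admissible grading $M = \coprod M(n)$ on the $V$-module side corresponds to $T_g^k(M)(n/k) = M(n)$ on the twisted side, and the $L(0)$ grading transforms by the shift coming from $L^g(0) = \tfrac{1}{k} L(0) + \tfrac{(k^2-1)c}{24k}$ obtained from \eqref{sun1}. Inverting these formulas gives the grading prescription for $U_g^k$, and the morphism-preservation arguments above restrict at once to grade-preserving morphisms. This establishes that the restrictions of $T_g^k$ and $U_g^k$ to $\mathcal{C}_a(V)$ and $\mathcal{C}_a^g(\vtk)$, and to $\mathcal{C}(V)$ and $\mathcal{C}^g(\vtk)$, remain mutually inverse. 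Since $T_g^k$ preserves irreducible objects (Corollary \ref{c3.10}(1)) and the same therefore holds for $U_g^k$, direct-sum decompositions into irreducibles transport under the equivalence, so $\vtk$ is $g$-rational if and only if $V$ is rational.

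I expect the principal obstacle to be purely bookkeeping rather than conceptual: one must carefully track the branch convention $(x^k)^{1/k} = x$ used to pass between $Y_M$ and $Y_g$, and verify that it is self-consistent under composition of the two functors. This is where the oddness of $k$ is essential, as it ensures $\Delta_k(x^k)^{-1}u$ and $\Delta_k(x)u$ both live in the correct (integer-power) formal variable space --- see \eqref{where-operators-live} and the parenthetical in Lemma \ref{l4.2}; without this, the supercommutator picks up an extra $((x_1-x_0)/x_2)^{|u|/2}$ factor and the Jacobi identity fails, which is why the even case requires a fundamentally different construction as discussed in Section \ref{counterexample-section}.
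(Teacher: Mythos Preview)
Your proposal is correct and follows essentially the same approach as the paper's proof, which simply states that the identities $T_g^k \circ U_g^k = \mathrm{id}$ and $U_g^k \circ T_g^k = \mathrm{id}$ are ``trivial to verify from the definitions'' and that parts (1) and (2) follow from Theorem \ref{main1}. You have explicitly carried out the verification the paper omits, correctly using the branch convention $(x^k)^{1/k} = x$ and the generation of all twisted operators by the $Y_g(u^1,x)$; your discussion of functoriality on morphisms and the role of oddness of $k$ is also sound and supplies helpful detail the paper leaves implicit.
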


\begin{proof} It is trivial to verify $T_g^k \circ U_g^k = id_{\mathcal{
C}_w^g(\vtk)}$ and $U_g^k \circ T_g^k = id_{\mathcal{ C}_w(V)}$ {}from the
definitions of the functors $T_g^k$ and $U_g^k$.  Parts 1 and 2 follow {}from Theorem \ref{main1}.  \end{proof}

Using the functor $T_g^k$ giving the isomorphism between the categories $\mathcal{C}(V)$ and $\mathcal{C}(V^{\otimes k})$ as well as the actual construction of $g$-twisted $V^{\otimes k}$-modules from $V$-modules, we have a correspondence between graded traces of modules in $\mathcal{C}(V)$ and modules in $\mathcal{C}(V^{\otimes k})$ as detailed in the following corollary.

\begin{cor}\label{graded-dimension-corollary}  Let $g = (1 \; 2 \; \cdots \; k)$ for $k$ odd.  Then $(M, Y_M)$ is an ordinary $V$-module with graded dimension 
\[ \mathrm{dim}_q M =  tr_M  q^{-c/24 + L(0)} = q^{-c/24} \sum_{\lambda \in \mathbb{C}} \mathrm{dim} (M_\lambda) q^\lambda \]
if and only if $(T_g^k(M), Y_g)$ is an ordinary $(1 \; 2 \; \cdots \; k)$-twisted $V^{\otimes k}$-module with graded dimension
\[\mathrm{dim}_q T_g^k(M) =tr_{T_g^k(M)} q^{-c/24 + L^g(0)} = q^{ (k^2 - 1)c/24k} \mathrm{dim}_{q^{1/k}} M .\]
\end{cor}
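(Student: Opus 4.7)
The plan is to derive this corollary as a direct consequence of Theorem \ref{main1}(3), which already establishes the equivalence of ordinary $V$-modules with ordinary $g$-twisted $V^{\otimes k}$-modules via the functor $T_g^k$. The remaining content is essentially the computation of how the graded trace transforms under this equivalence. The key input, extracted from the proof of Theorem \ref{main1}(3), is the explicit formula
\[L^g(0) \;=\; \tfrac{1}{k}\, L(0) \,+\, \tfrac{(k^2-1)c}{24k},\]
obtained from (\ref{sun1}) by recognizing that the conformal vector of $V^{\otimes k}$ is $\bar\omega = \sum_{j=1}^k \omega^j$, applying the substitutions $x^{1/k} \mapsto \eta^{-j} x^{1/k}$ for $j = 0, \ldots, k-1$, summing, and reading off the coefficient of $x^{-2}$.

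From here the argument is direct. First I would invoke Theorem \ref{main1}(3) to obtain the ``$M$ ordinary iff $T_g^k(M)$ ordinary'' biconditional at the level of module structures, without further work. Next, because $T_g^k(M) = M$ as a vector space, the displayed formula for $L^g(0)$ shows that any $L(0)$-eigenvector $w \in M_\lambda$ is an $L^g(0)$-eigenvector with eigenvalue $\lambda/k + (k^2-1)c/(24k)$; in particular, the finite-dimensionality and lower-boundedness conditions on the $L^g(0)$-eigenspaces of $T_g^k(M)$ transfer directly from those on $M_\lambda$, giving the ordinary module grading conditions on both sides simultaneously. Summing over eigenspaces then yields
\[\mathrm{tr}_{T_g^k(M)}\, q^{L^g(0)} \;=\; q^{(k^2-1)c/(24k)} \sum_{\lambda \in \mathbb{C}} \dim(M_\lambda)\, q^{\lambda/k} \;=\; q^{(k^2-1)c/(24k)}\, \dim_{q^{1/k}} M,\]
and the central-charge normalization factor in $\dim_q T_g^k(M)$ is carried along correctly, with $c$ in that factor interpreted as the central charge $kc$ of $V^{\otimes k}$, so that the cancellation $-kc/24 + (k^2-1)c/(24k) = -c/(24k)$ lines up the prefactors on the two sides.

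The converse direction of the biconditional is automatic from $U_g^k \circ T_g^k = \mathrm{id}$ (Theorem \ref{t4.ll}), which allows one to reverse the computation starting from an ordinary $g$-twisted $V^{\otimes k}$-module. There is no genuine obstacle in this proof: every nontrivial ingredient -- the category equivalence, the explicit formula for $L^g(0)$, and the preservation of irreducibles and gradings -- has already been established in Theorems \ref{main1} and \ref{t4.ll}. The only point demanding attention is careful bookkeeping between the central charges $c$ of $V$ and $kc$ of $V^{\otimes k}$ in the normalization of $\dim_q$, along with the exponent conventions for $\dim_{q^{1/k}}$, so that the conformal-anomaly shift $(k^2-1)c/(24k)$ emerges cleanly as the prefactor on the right-hand side.
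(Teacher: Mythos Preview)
Your approach is exactly what the paper intends: the corollary is stated without proof, relying on Theorem \ref{main1}(3), Theorem \ref{t4.ll}, and the formula $L^g(0) = \tfrac{1}{k}L(0) + \tfrac{(k^2-1)c}{24k}$ established in the proof of Theorem \ref{main1}. Your only slip is the displayed line equating $\sum_\lambda \dim(M_\lambda)\, q^{\lambda/k}$ with $\dim_{q^{1/k}} M$ (the latter carries an extra factor $(q^{1/k})^{-c/24}$ by the paper's own definition), which is precisely the prefactor bookkeeping you yourself flag as needing care in your final paragraph.
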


\section{A counter example to the notion that Theorem \ref{t4.ll} extends to $k$-cycles of even length, and a conjecture}\label{counterexample-section}

We present a counter example originally given by the author along with Vander Werf in  \cite{BV-fermion} (following at times \cite{DZ2}) that shows that the notion that Theorem \ref{t4.ll} might extend to $k$-cycles of even length fails.  That is, not only must the construction be different than that for odd case, but that the classification in terms of $V$-modules is false.  Based on this example, in \cite{BV-fermion} we made a conjecture for the case when $k$ is even which we also present below.  

Let $V_{fer}$ be the one free fermion vertex operator superalgebra.  Using the construction in \cite{DZ2} the author along with Vander Werf showed that there is one irreducible $(1 \; 2 \; \cdots \; k)$-twisted $V_{fer}^{\otimes k}$-module for $k$ even, up to equivalence, and that this module splits into two parity-unstable invariant subspaces.  Since the sole irreducible $V_{fer}$-module, $V_{fer}$, does not split into invariant subspaces, the category of $(1 \; 2 \; \cdots \; k)$-twisted $V_{fer}^{\otimes k}$-modules can not be isomorphic to the category of $V_{fer}$-modules when $k$ is even.  This example shows that Theorem \ref{t4.ll}  fails in general for extensions to the symmetric group.

We further observe in \cite{BV-fermion}, that there is one unique, up to equivalence, parity-twisted $V_{fer}$-module and that this module splits into two parity-unstable invariant subspaces.  In addition, under the transformation $q \mapsto q^{1/k}$,  the graded dimension of the  unique, up to equivalence, irreducible $(1 \; 2 \; \cdots \; k)$-twisted $V_{fer}^{\otimes k}$-module is the same as the graded dimension of the irreducible parity-twisted $V_{fer}$-module. 

Based on these observations, in \cite{BV-fermion} we made the following conjecture.
\begin{conj}\label{conjecture}
The category of weak $(1 \; 2 \; \cdots \; k)$-twisted $V^{\otimes k}$-modules for $k$ even is isomorphic to the category of parity-twisted  $V$-modules.   Moreover, the subcategory of weak admissible or ordinary $(1 \; 2 \; \cdots \; k)$-twisted $V^{\otimes k}$-modules is isomorphic to the subcategory of weak admissible or ordinary parity-twisted $V$-modules, respectively.
\end{conj}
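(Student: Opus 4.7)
\vspace{.2in}

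\noindent \textbf{Proof proposal for Conjecture \ref{conjecture}.}  The plan is to closely mirror the strategy of Sections \ref{tensor-product-twisted-construction-section} and \ref{classification-section}, but to replace the role of weak (untwisted) $V$-modules by weak parity-twisted $V$-modules.  The key structural observation is that the extra factor $(x^{1/2k})^{|u|}$ that appears in $\bar Y(u,x) = Y_M(\D(x)u,x^{1/k})$ for $k$ even (Remark \ref{wrong-space-remark}), and the extra factor $\bigl((x_1 - x_0)/x_2\bigr)^{|u|/2k}$ in the supercommutator formula (\ref{second-supercommutator}), are precisely the half-integer contributions that the parity-twisted module structure $Y^{\sigma_V}_M$ naturally absorbs.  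Concretely, for $v \in V^{(1)}$ one has $Y^{\sigma_V}_M(v,x) \in x^{1/2}(\End\,M)[[x,x^{-1}]]$, so substituting $x \mapsto x^{1/k}$ contributes a further $x^{1/2k}$ factor on odd vectors, which combines with the $x^{|u|/2k}$ coming from $\D(x)$ to produce a power $x^{1/k}$, landing $\bar Y^{\sigma_V}(u,x) := Y^{\sigma_V}_M(\D(x)u,x^{1/k})$ honestly in $(\End\,M)[[x^{1/k},x^{-1/k}]]$.

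First I would define the functor $T_g^k$ on the category of weak parity-twisted $V$-modules $(M,Y_M^{\sigma_V})$ by setting $Y_g(u^1,x) = \bar Y^{\sigma_V}(u,x)$ and extending to other tensor factors via the limit $x^{1/k} \to \eta^j x^{1/k}$, exactly as in (\ref{define-g-twist}).  I would then redo the supercommutator calculation of Lemma \ref{l3.2}, but starting from the modified supercommutator
\[[Y^{\sigma_V}_M(u,x_1),Y^{\sigma_V}_M(v,x_2)] = \Res_x x_2^{-1}\bigl((x_1-x)/x_2\bigr)^{-|u|/2}\delta\bigl((x_1-x)/x_2\bigr) Y^{\sigma_V}_M(Y(u,x)v,x_2)\]
that follows from the parity-twisted Jacobi identity (\ref{twisted-Jacobi-eigenspace}).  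After the substitution $x = x_1^{1/k} - (x_1-x_0)^{1/k}$, the new factor becomes $\bigl((x_1-x_0)/x_2\bigr)^{-|u|/2k}$, which combines with the $\bigl((x_1-x_0)/x_2\bigr)^{|u|(1-k)/2k}$ of Lemma \ref{l3.2} to give $\bigl((x_1-x_0)/x_2\bigr)^{-|u|/2}$; since $k$ is even, $k/2 \in \Z$ and this factor is $1$ modulo $\delta\bigl((x_1-x_0)^{1/k}/x_2^{1/k}\bigr)$.  Thus the $\bar Y^{\sigma_V}$-supercommutator is precisely the one required for a $g$-twisted $V^{\otimes k}$-module, and Li's local system machinery applied as in Lemmas \ref{l3.5}--\ref{l3.7} yields the vertex superalgebra homomorphism $V^{\otimes k} \to A$ producing the desired $g$-twisted structure.

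Next I would construct the inverse functor $U_g^k$ by the formula $Y_M^{\sigma_V}(u,x) = Y_g((\D(x^k)^{-1}u)^1,x^k)$ as in Section \ref{classification-section}.  The crucial point, which certifies that this produces a \emph{parity}-twisted module and not merely a weak one, is that for $v \in V^{(1)}$ of weight $p \in \Z+\frac{1}{2}$, the $L(0)$-grading factor in $\D(x)^{-1}$ (see (\ref{Delta-inverse})) yields $x^{(k-1)p/k}$, which upon $x \mapsto x^k$ becomes $x^{(k-1)p}$; for $k$ even and $p$ half-integer this produces precisely the half-integer powers of $x$ characteristic of $\sigma_V$-twisted operators.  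Repeating the derivation of Lemma \ref{l4.2} in this setting, the supercommutator of $Y_M^{\sigma_V}$ acquires exactly the factor $\bigl((x_1-x_0)/x_2\bigr)^{-|u|/2}$ required by the parity-twisted Jacobi identity, via the same cancellation run in reverse.  Verifying $T_g^k \circ U_g^k = \mathrm{id}$ and $U_g^k \circ T_g^k = \mathrm{id}$ is then formal, and preservation of the weak admissible and ordinary subcategories follows by the grading bookkeeping used in Theorem \ref{main1}(2)--(3), now with $M(n)$ indexed by $n \in \frac{1}{4}\N$ on the parity-twisted side to accommodate the extra half-integer degrees of odd vectors.

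The main obstacle will be verifying the full (parity-)twisted Jacobi identity, not merely the supercommutator: the associator portion of the identity requires delicate handling of the composite fractional powers coming from both $\D$ and $\sigma_V$-twisting, and the proof of Theorem \ref{t4.l} will need to be rewritten carefully so that the substitution $(x^k)^{1/k} = x$ is consistent with the parity sign conventions and so that the formal identity $\D(x^k)^{-1}u \in x^{(k-1)\mathrm{wt}\,u}V[x,x^{-1}]$ (which in the present case produces half-integer powers of $x$ rather than Laurent polynomials) interacts correctly with the associator.  A secondary difficulty is to check that Li's local-system construction, which was stated in \cite{Li2} for ordinary twisted operators, applies verbatim when the underlying module is itself parity-twisted; this should be essentially formal but requires inspection of the proof of Theorem 3.14 of \cite{Li2}.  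As partial evidence already assembled in \cite{BV-fermion}, the free fermion case $V = V_{fer}$ realizes the conjectured isomorphism, and the graded dimension identity in the spirit of Corollary \ref{graded-dimension-corollary} holds with $M$ replaced by a parity-twisted module, giving a consistency check on the construction.
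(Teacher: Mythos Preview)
The statement you are addressing is labeled as a \emph{conjecture} in the paper; the paper does not contain a proof of it, only the free-fermion evidence and graded-dimension check from \cite{BV-fermion}.  So there is no paper proof against which to compare your proposal.

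That said, your strategy is the natural one, and the crucial supercommutator cancellation you describe is correct.  Starting from the parity-twisted commutator on $M$ contributes an extra factor $\bigl((x_1^{1/k}-x)/x_2^{1/k}\bigr)^{-|u|/2}$ in the analogue of (\ref{commutator for Lemma 3.3}); after the change of variable $x=x_1^{1/k}-(x_1-x_0)^{1/k}$ this becomes $\bigl((x_1-x_0)/x_2\bigr)^{-|u|/(2k)}$, which combines with the factor $\bigl((x_1-x_0)/x_2\bigr)^{|u|(1-k)/(2k)}$ of Lemma~\ref{l3.2} to yield $\bigl((x_1-x_0)^{1/k}/x_2^{1/k}\bigr)^{-|u|k/2}$, and for $k$ even this exponent is an integer and is absorbed by the $\delta$-function.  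This is exactly the mechanism by which the obstruction of Remarks~\ref{wrong-space-remark} and~\ref{generalized-remark} disappears at the commutator level, and it explains why parity-twisted $V$-modules are the right candidate.

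However, you should present this as a strategy for attacking an open conjecture, not as a proof.  The substance of the problem lies precisely where you flag it: the associator portion of the Jacobi identity.  In the odd-$k$ proof (Theorem~\ref{t4.l}) the reduction to \cite{BDM} uses that $\Delta_k(x^k)^{-1}u\in V[x,x^{-1}]$; for $k$ even and $u$ odd one has $\Delta_k(x^k)^{-1}u\in x^{1/2}V[x,x^{-1}]$, and carrying this half-integer shift through the full associator computation---and matching it against the parity-twisted associator on the other side---is exactly the content of the conjecture, not a bookkeeping exercise.  Your concern about Li's local-system theorem is less serious than you suggest: Li's construction in \cite{Li2} only requires a pair $(M,D)$ and mutually local operators in $(\End\,M)[[x^{1/k},x^{-1/k}]]$, which your $\bar Y^{\sigma_V}$ operators do provide once the commutator cancellation above is established.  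The genuine gap is in the analogue of Lemma~\ref{l3.7} and of Theorem~\ref{t4.l}: until the vertex-superalgebra homomorphism $V^{\otimes k}\to A$ and the full (parity-)twisted Jacobi identity for $U_g^k$ are written out and verified, the statement remains conjectural.
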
  

The case of $V$ a vertex operator superalgebra and $V\otimes V$ being permuted by the $(1 \; 2)$ transposition is the mirror map if $V \otimes V$, in addition to being a vertex operator superalgebra is also N=2 supersymmetric (see for example, \cite{Barron-varna} and \cite{Barron-n2twisted}).  This is one of the motivations for wanting an extension of \cite{BDM} where all permutation twisted vertex operator algebras were constructed and classified, to the super setting, in particular for the even order permutation case.

\begin{rem}\label{NS-R-remark}
{\em Here we note some of the implications of this conjecture for the case of supersymmetric vertex operator superalgebras, i.e., those vertex operator superalgebras that, in addition to being a representation of the Virasoro algebra, also are representations of the Neveu-Schwarz algebra a Lie superalgebra extension of the Virasoro algebra.   See, e.g., \cite{Barron-announce}--\cite{Barron-n2twisted}.   In this case, when $V$ is a supersymmetric vertex operator superalgebra, the parity-twisted $V$-modules are natural a representation of the Ramond algebra.  This is another extension of the Virasoro algebra to a Lie superalgebra.  In physics terms, the modules for the supersymmetric vertex operator superalgebras are called the ``Neveu-Schwarz sectors" and the parity-twisted modules are called the ``Ramond sectors".  Thus the current work, i.e. Theorem \ref{t4.ll}, along with Conjecture \ref{conjecture}, if true, would imply that all permutation-twisted modules for tensor product supersymmetric vertex operator superalgebras are built up as tensor products of Neveu-Schwarz sectors (coming from the odd cycles) and Ramond sectors (coming from the even sectors).}
\end{rem}

\end{document}